\documentclass[a4paper]{article}

\usepackage[utf8]{inputenc}
\usepackage{lmodern}
\usepackage{amssymb,amsfonts,amsthm,amsmath,bbm,mathrsfs,aliascnt,mathtools}
\usepackage[english]{babel}

\usepackage{tikz}
\usetikzlibrary{decorations}

\usetikzlibrary{decorations.pathreplacing}

\tikzset{individu/.style={draw,thick}}
\usepackage{graphicx}
\usepackage[T1]{fontenc}
\usepackage{libertine}
\usepackage[libertine,liby,libaltvw,vvarbb]{newtxmath}

\theoremstyle{plain}
\newtheorem{theorem}{Theorem}[section]
\newtheorem{corollary}[theorem]{Corollary}
\newtheorem{lemma}[theorem]{Lemma}
\newtheorem{proposition}[theorem]{Proposition}

\theoremstyle{definition}

\newtheorem{assumption}[theorem]{Assumption}

\theoremstyle{remark}
\newtheorem{remark}[theorem]{Remark}

\numberwithin{equation}{section}

\usepackage{tikz}

\makeatletter \newcommand \listoftodos{\section*{Todo list} \@starttoc{tdo}}
\newcommand\l@todo[2]
{\par\noindent \textit{#2}, \parbox{10cm}{#1}\par} \makeatother

\newcommand{\N}{\mathbb{N}}

\newcommand{\R}{\mathbb{R}}
\newcommand{\C}{\mathbb{C}}

\newcommand{\calC}{\mathcal{C}}

\newcommand{\U}{\mathbb{U}}

\newcommand{\T}{\mathbb{T}}

\newcommand{\ind}[1]{\mathbf{1}_{\left\{#1\right\}}}
\newcommand{\indset}[1]{\mathbf{1}_{#1}}

\renewcommand{\tilde}[1]{\widetilde{#1}}
\renewcommand{\hat}[1]{\widehat{#1}}

\newcommand{\e}{\mathrm{e}}
\newcommand{\dd}{\mathrm{d}}

\newcommand{\SB}{^{\bullet}}
\newcommand{\svn}{_{\varnothing}}

\DeclareMathOperator{\E}{\mathbb{E}}
\newcommand{\Q}{\mathbb{Q}}
\renewcommand{\P}{\mathbb{P}}

\newcommand{\calF}{\mathcal{F}}

\newcommand{\calS}{\mathcal{S}}

\bibliographystyle{acm}

\renewcommand{\epsilon}{\varepsilon}
\usepackage[colorlinks]{hyperref}

\usepackage{xcolor}

\title{Local times and excursions for self-similar Markov trees}
\author{Jean Bertoin\thanks{Institute of Mathematics, University of Zurich, Switzerland.}
\and Armand Riera\thanks{Sorbonne Universit\'e, LPSM, France.}
\and Alejandro Rosales-Ortiz\thanks{Institute of Mathematics, University of Zurich, Switzerland.} }
\date{ }

\begin{document}

\maketitle

\begin{abstract} This work builds upon the recent monograph \cite{ssMt} on self-similar Markov trees. A self-similar Markov tree is a random real tree equipped with a function from the tree to $[0,\infty)$ that we call the decoration.
Here, we construct local time measures $L(x,\dd t)$ at every  level $x>0$ of the decoration for a large class of self-similar Markov trees.  This enables us to mark at random a typical point in the tree at which the decoration is $x$. We identify the law of the decoration along  the branch from the root to this tagged point in terms of  a remarkable (positive) self-similar Markov process. We also show that after a proper normalization, $L(x,\dd t)$ converges as $x\to 0+$ to the harmonic measure $\upmu$ on the tree. Finally, we point out that using  a local time measure instead of the usual length measure $\uplambda$ to compute distances on the tree turn the latter into a continuous branching tree. This is relevant to analyze the excusions of the decoration away from a given level. Many results of the present work shall be compared with the recent ones in \cite{RRO1, RRO2} about local times and excursions of a Markov process indexed by  L\'evy tree. 
\end{abstract}

\noindent \emph{\textbf{Keywords:}} Branching process; excursion theory; local time;  self-similar Markov tree; spinal decomposition.

\medskip

\noindent \emph{\textbf{AMS subject classifications:}}  60J55; 60J80; 60G18; 60G51.

\section{Introduction}
\label{sec:introduction}
The notion of local time  for a Markov process goes back to Blumenthal and Getoor \cite{BGLT} in the 60's.
Roughly speaking, this refers to a functional that measures the time that the Markov process spends at a given point of the state space.
Local times play an important role in stochastic analysis, notably in connection with potential theory and excursion theory.
Our purpose is to investigate an analog in the framework of self-similar Markov trees, a family of random processes that have been introduced recently  in  \cite{ssMt}.
These naturally  arise in  invariance principles for Galton--Watson processes with integer types, and as such, in a variety of scaling limit theorems for certain random structures. 

In short, a self-similar Markov tree, for which we often use the acronym ssMt, can be thought of as a (positive) self-similar Markov process in a branching setting. It
consists of a random function called the decoration,  $g: T\to [0,\infty)$,  which is defined on a  random rooted compact real tree $T$. 
Figure \ref{fig:ssMt} below may help to visualize a sample of a ssMt.
We can think of $T$
 as encoding the genealogy of a population that  starts from a single ancestor and becomes eventually extinct, where distances on $T$ are interpreted as time durations.
 Each individual in the population has a positive size that evolves  as time passes according to Markovian dynamics, begets children, and eventually dies. We think of $g(t)$ as the size of an individual at some time represented by $t\in T$. 
 In general, a parent begets infinitely many children; however, all but finitely many are minuscule with an extremely short life.
 As the name suggests, $(T,g)$ is self-similar and satisfies a version of the Markov property.

\begin{figure}[!h]
 \begin{center}
  \includegraphics[width=7cm]{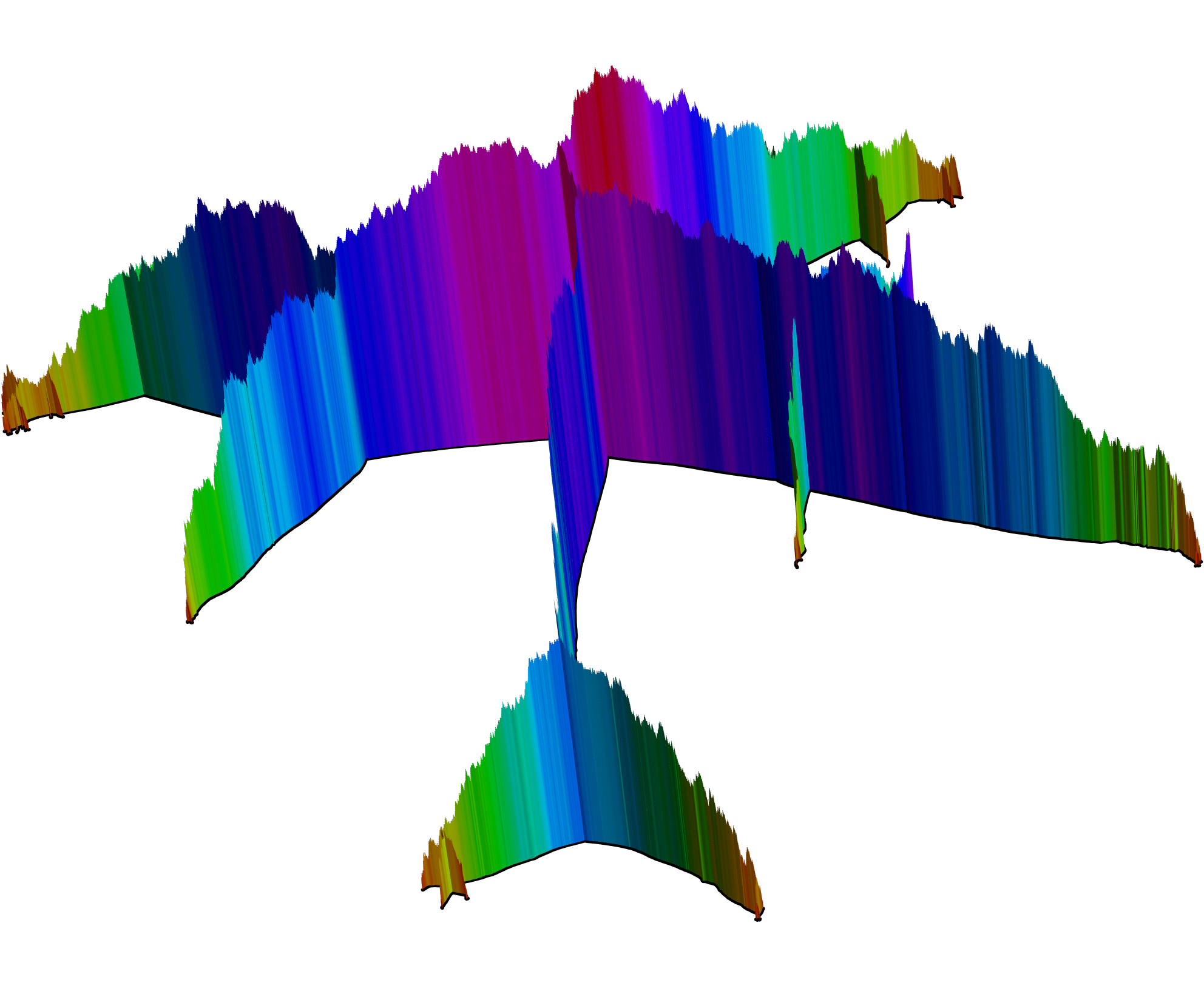}
 \caption{3D simulation of a self-similar Markov tree $(T,g)$: the tree $T$ is  embedded in a horizontal plane
  and  vertical coordinates indicate the values taken by  the decoration $g$ on $T$ .} \label{fig:ssMt}
 \end{center}
 \end{figure}

Our motivation for this work partly stems from the recent article  \cite{RRO1} by two of us, in which the local time at a regular point for a Markov process indexed by a L\'evy tree
has been studied in depth; see also \cite{RRO2} for applications to excursion theory. We stress that Markov processes on L\'evy trees and ssMt are fairly different stochastic processes. For the first, Markovian dynamics are superposed to a given L\'evy tree,
and as a consequence, there are no jumps at branching points. At the opposite, the decoration $g$ in a ssMt evolves simultaneously as the genealogical tree $T$ grows, the two are intrinsically coupled and $g$ is typically discontinuous at a branching point of $T$. 
Another striking difference can be observed by considering values taken at a fixed height in the tree.
The latter always form a null set (in the sense that for any $\epsilon>0$, only finitely many values are greater than $\epsilon$) for a ssMt, but this is usually not the case for a Markov process indexed by a L\'evy tree.
Despite their dissimilarities, both Markov processes indexed by L\'evy trees and ssMt are described by random functions on random continuous trees, and it is thus natural to investigate alike properties.

Any real tree is naturally endowed with a length (or Lebesgue) measure $\uplambda$, such that the $\uplambda$-measure of a segment in the tree is given by the distance between the two extremities of that segment. The starting point of this work is the observation that for any ssMt satisfying some fairly general assumptions that will be presented in due time,
one can construct a natural family of measures, $L(x, \dd t)$ for $x>0$,  on $T$ which are carried by the level sets of the decoration 
$$\mathcal{L}(x)=\{t\in T: g(t)=x\}.$$
Informally, they are given by 
$$L(x, \dd t) = x^{-\alpha}\delta(x-g(t)) \uplambda (\dd t),$$
where $\delta$ is the Dirac delta function and $\alpha>0$ the index of self-similarity (the normalization by  $x^{\alpha}$ is motivated by future convenience only). In particular, for any measurable function $\varphi: \R_+\times T \to \R_+$ with $\varphi(0,t)\equiv 0$, there is the occupation density formula,
\begin{equation}\label{E:occupdens}
\int_{T}  \uplambda(\dd t)  \varphi(g(t),t)= \int_0^{\infty} \dd x\,  x^{\alpha-1} \int_{T}  L(x,\dd t) \varphi (x,t) .
\end{equation}
As it is known that for any $\epsilon>0$, 
the upper level set of the decoration $\{t\in T: g(t)\geq \epsilon\}$ is carried by a finite number of segments of $T$ only, the construction of local times for ssMt in Section \ref{sec:construc} is essentially effortless --at least, once the general formalism for ssMt has been assimilated -- and definitely much more elementary than the construction of local times for a Markov process indexed by a Lévy tree in \cite{RRO1}.

Our main contributions can be summarized as follows. First, Theorem~\ref{T1} identifies the law of the decoration along the path from the root to a typical point of $\mathcal{L}(x)$, that is, a point sampled according to $L(x,\mathrm{d}t)$: we show that it evolves as a positive self-similar Markov process started from $1$ and conditioned to terminate at $x>0$. Second, under appropriate assumptions,
Theorem~\ref{theorem:convergence} shows that the family of local time measures $L(x,\mathrm{d}t)$, $x > 0$, converges to the harmonic measure $\mu(\mathrm{d}t)$ as $x\to 0+$. Third, Theorem~\ref{T:Lcbt} proves that the level set $\mathcal{L}(1)$, equipped with the distance induced by $L(1,\mathrm{d}t)$, has the law of a continuous branching tree denoted by $\tilde{\mathcal{L}}$. Finally, Theorem~\ref{T:EPPP} describes the structure of the 
excursions away from level $1$. Here, these are defined as the closures of the  connected components of $T\setminus \mathcal{L}(1)$ endowed with the corresponding restricted decoration.  Namely, we establish that, when the label of the root of the ssMt is $1$, then

\begin{itemize}
  \item excursions for which the root point is the only one with decoration~$1$ are in one-to-one correspondence with the leaves of $\tilde{\mathcal L}$;
  \item those that contain at least three points at which decoration is $1$ correspond to the branching points of $\tilde{\mathcal L}$;
  \item finally, excursions that contain exactly two points yield points of multiplicity two of $\tilde{\mathcal L}$.  
\end{itemize}

This result echoes the classical It\^o excursion theory.  An analogue in the setting of Markov processes indexed by L\'evy trees (where the continuous branching tree is replaced by a different L\'evy tree) can be found in~\cite{RRO2}. 
 The results presented in this work are in essence similar to the  corresponding ones in \cite{RRO1, RRO2}, but the techniques employed are of radically different nature due to the stark difference between the objects involved. For related earlier work, see \cite{ALG15}, which develops an excursion theory for Brownian motion indexed by the Brownian tree.  To streamline the exposition, we omit certain minor routine technical verifications.

The rest of this work is organized as follows. Section \ref{sec:nutshell} is a brief overview of the theory of ssMt with a focus on key elements needed in this work. Section \ref{sec:LP} provides some background on local times for L\'evy processes and potential densities.
We start entering the heart of the matter in Section \ref{sec:LTdeco} with the construction of local time measures on ssMt and the identification of the decoration from the root to a typical point of the level set $\mathcal{L}(x)$. Next, Section \ref{sec:convergenceH} is devoted to the convergence of the
local time measure $L(x, \dd t)$ towards the harmonic measure $\upmu(\dd t)$ as $x\to 0+$.  Finally, the branching structure of the level set and of the excursions away from the corresponding level is presented in Section~\ref{sec:branchstruc}.

\section{Self-similar Markov trees in a nutshell} 
\label{sec:nutshell}
The purpose of this section is to introduce some essential features of ssMt which will be needed for our goals, avoiding as much as possible technical issues when these are not important  for a first reading. Thus the reader should not expect to get here neither a precise construction nor rigorous statements (which has to be found in \cite[Chapters 1 and 2]{ssMt}), but rather an intuitive and sometimes only heuristic description of  key elements.

The celebrated Lamperti transformation establishes a bijective correspondence between real L\'evy processes and (positive) self-similar Markov processes. It also lies naturally at the heart of the construction of self-similar Markov trees. We briefly recall this transformation in a setting tailored for our purposes and refer to \cite[Chapter 13]{Kyp} for details; see also \cite{KypPar}.

Let $\xi$ denote a real L\'evy process that either drifts to $-\infty$, i.e. 
$\lim_{t\to \infty}\xi_t=-\infty$ almost-surely, 
or is killed at an exponentially distributed time $\zeta$ (we agree that $\zeta=\infty$ when there is no killing). By convention, if $\zeta<\infty$, we set $\xi_{\zeta+t}=-\infty$ for every $t\geq 0$. We write 
$(\sigma^2, \mathrm a, \Lambda)$ for  its characteristic triplet, where $\sigma^2\geq 0$ is the Gaussian coefficient, $\mathrm a\in \R$ the drift coefficient,
and $\Lambda$ the L\'evy measure, with the convention that the killing rate is  incorporated to the L\'evy measure as the mass at $-\infty$. 
We  also write  $\psi$ for the L\'evy exponent, so that
$$E\left(\exp(\gamma \xi_t)\indset {t<\zeta} \right) = \exp\big(t\psi(\gamma)\big), \qquad t\geq 0,$$
for any $\gamma\in \C$ such that the expectation in the left-hand side is well-defined.
This is in particular the case when $\gamma=i \theta$ is purely imaginary, and $-\psi(0)\geq 0$ is the killing rate.
In this setting, the L\'evy-Khintchine formula reads
$$\psi(\gamma) =
\frac{1}{2}\sigma^2 \gamma^2+ {\mathrm a} \gamma + \int_{\R^*}   \left( \mathrm{e}^{\gamma y} -1-  \gamma y \mathbf{1}_{|y|\leq 1} \right)\Lambda ( \dd y) - \Lambda(\{-\infty\}).$$
 
 We next fix some $\alpha>0$ to serve as index of self-similarity.
The process $X=(X_t)_{0\leq t < z}$ with a.s. finite lifetime 
$$z=\int_0^\zeta \exp(\alpha \xi_s) \dd s$$ and defined for $t<z$ by  
\begin{equation} \label{E:Lamperti}
X_t=\exp(\xi_s), \qquad \text{where  } t=\int_0^s \exp(\alpha \xi_r) \dd r,
\end{equation}
is then a Markov process, which is further self-similar with index $\alpha$. By convention, we set $X_z=0$.  Plainly, its law  is determined by that of the L\'evy process and the index of self-similarity, and 
thus more precisely, either by the quadruplet $(\sigma^2, \mathrm a, \Lambda; \alpha)$, or, equivalently, by the pair $(\psi; \alpha)$.

Similarly, the law of a self-similar Markov tree is  characterized  by a quadruplet $(\sigma^2, \mathrm a, \boldsymbol{\Lambda}; \alpha)$, where  now 
 $\boldsymbol{\Lambda}$ is a measure on the product space $[-\infty, \infty) \times \calS_1$
 and $\calS_1$ denotes the space of non-increasing sequences $\mathbf y=(y_n)_{n\geq 1}$ with $y_n\in[-\infty, \infty)$ and $
 \lim_{n\to \infty} y_n=-\infty$. We write $\Lambda_0$ for the image of $\boldsymbol{\Lambda}$ by the first projection 
  and always assume that $\Lambda_0$ is a L\'evy measure on $[-\infty,\infty)$ (recall that the possible mass at $-\infty$ is interpreted as a killing rate). We refer to  $\boldsymbol{\Lambda}$ as the generalized L\'evy measure. 
 Roughly speaking, the triplet $(\sigma^2, \mathrm a, \boldsymbol{\Lambda})$ enables us to define a branching L\'evy process, much in the same way as for the L\'evy-It\^{o} construction of a L\'evy process. Slightly less informally, this involves Poisson random measures on $\R_+\times [-\infty, \infty) \times \calS_1$ with intensity $\dd t \boldsymbol{\Lambda} (\dd y_0, \dd \mathbf y)$. An atom at $(t, y_0, \mathbf y)$ indicates a birth event; the first component $t$ represents the time at which this event occurs, the second component $y_0$ corresponds to 
  the size of a jump of the parent at time $t$, whereas the terms $y_n$, $n\geq1$, of the sequence  $\mathbf y$ determine the relative locations of the children at birth.  We refer to \cite{BM} for a detailed construction.
  Roughly speaking, the self-similar Markov tree  is then obtained by applying the Lamperti transformation for each branch (or individual)  of the branching L\'evy process.

 Let us explain a bit more precisely, though still informally, the construction of the tree $T$ and its decoration $g$. Recall that a ssMt can be thought as depicting a population evolving in continuous time. 
 We standardly  label every individual by a finite sequence of positive integers and use the Ulam tree $\U=\bigcup_{n\geq 0}\N^n$
 to encode the genealogy, where by convention $\N^0=\{\varnothing\}$.  
 This means that the $j$-th child\footnote{This means that all the children of an individual are ranked according to some algorithm. For formalism convenience, we allow fictitious children, in the sense that some labels in $\U$ may not correspond to any real individual in the population. The descent of a fictitious individual is of course entirely fictitious, and those fictitious individuals can be fully ignored in the construction.}
 of an individual at the $n$-th generation, say $u=(u_1, \ldots, u_n)$, is $uj=(u_1, \ldots, u_n, j)$.
 Each individual $u\in \U$ is identified with an oriented segment  $S_u$ of $T$; the length $|S_u|$ of $S_u$ is the lifespan of $u$. 
 The left-extremity of  the ancestral segment $S\svn$ may be thought of as the birth of the ancestor of the population; it is denoted by $\uprho$ and serves as root.
In turn, for every $u\in \U$ and $j\geq 1$,   the left-extremity of  
 the segment  $S_{uj}$
 corresponds to the birth of the $j$-th child and  is glued on the parent segment $S_u$ at the  location corresponding to the age of the parent at the birth event. Starting from the ancestor $v=\varnothing$, we thus construct $T$ by gluing segments, generation after generation.  The decoration  on a  segment $S_u$ is denoted by $f_u$; it specifies the size of the individual $u$ as a function of its age. Its dynamics are those of a self-similar Markov process with self-similarity index $\alpha$, and can be constructed
 by applying the Lamperti transformation to a L\'evy process with characteristics $(\sigma^2, \mathrm a, \Lambda_0)$. The offspring of an individual $u\in \U$
 is encoded by a point process $\eta_u$ on $S_u\times (0,\infty)$ called the reproduction process, where the first coordinate of the atoms represent the ages of $u$ at the times when it begets children, and the second coordinate represent the size at birth (i.e. the initial value of the decoration) of a child.
 
 We refer the reader to \cite[Chapters 1 and 2]{ssMt}, and especially Section 2.2 there for a precise and rigorous construction. This requires a  crucial sub-criticality condition, \cite[Assumption 2.8]{ssMt}, which we now recall. We first define the cumulant $\kappa$ as the function taking values in $(-\infty, \infty]$ associated with the characteristic quadruplet $(\sigma^2, a, \boldsymbol{\Lambda}; \alpha)$ as follows:
 \begin{equation} \label{E:kappa} \kappa(\gamma)
:= \psi(\gamma)+ \int_{ \calS_1}  \boldsymbol{\Lambda}_1(\dd  \mathbf(y_n)_{n\geq 1} ) \sum_{i=1}^{\infty} \e^{\gamma y_n},
\end{equation}
where $\psi$ stands for the L\'evy exponent with characteristics $(\sigma^2, \mathrm a, \Lambda_0)$ and $ \boldsymbol{\Lambda}_1$ for the image of 
the generalized L\'evy measure $ \boldsymbol{\Lambda}$ by the second projection on $\calS_1$. 
 \begin{assumption} \label{A:subcrit} 
We suppose that there 
  exists $\gamma_0>0$ such that $$\kappa(\gamma_0)<0.$$
 \end{assumption}
In short, Assumption \ref{A:subcrit} ensures that almost-surely, the family  $(\sup f_u)_{u\in \U}$ of the maximal size reached by each individual, is null, in the sense that for any $\epsilon>0$,
\begin{equation} \label{E:deconull}
\text{the set }\{u\in\U :\sup f_u>\epsilon\} \text{ is finite},
\end{equation}
and also that
\begin{equation}\label{sum:finite}
\sum_{n=0}^{\infty} \max_{|u|=n} |S_u|< \infty.
\end{equation}
In words, the series of the maximal lifespans of individuals at generations $n=0,1, \ldots$ converges, which  ensure the compactness of $T$. 
We stress that nonetheless, the  length measure $\uplambda$ is in general nowhere locally finite on $T$.
 
 We shall assume for simplicity that the decoration at the root is normalized to $1$, as by self-similarity, this entails no loss of generality. 
 We use the notation $\P$ for the distribution of the family indexed by $u\in\U$ of the decoration-reproduction processes $(f_u, \eta_u)$ on each  segment $S_u$.
 Turning our attention to decorated trees themselves, we have to address a conceptual difficulty.
 Loosely speaking, we are only concerned with the structure of decorated trees rather than specific realizations such as those resulting from gluing; 
 this leads us to identify isomorphic decorated trees and work on the Polish space  
$ \mathbb{T}$  of isomorphic classes of decorated trees, see \cite[Section 1.4]{ssMt}.   We endowed it with $\Q$, the image of $\P$
by the map which first constructs a decorated tree from a family of decoration-reproduction processes and then sends a decorated tree to its equivalence class up to isomorphism
in $ \mathbb{T}$.
 In particular,   $\P$ determines $\Q$, but in general different characteristic quadruplets can yield the same law $\Q$, because 
  one cannot always fully recover the population model (i.e. the decoration-reproduction processes) from the decorated tree alone. 
  Nonetheless, many variables considered in \cite{ssMt} or in the present text are intrinsic, in the sense that even though they can be conveniently defined at first in terms of 
  specific realization of a decorated tree, they are invariant by isomorphisms and can henceforth be treated as variables on $ \mathbb{T}$.
  
 The expectation under $\P$ or $\Q$ is denoted indifferently by $\E$. Occasionally, we will have to deal with the rescaled version  of the ssMt that has initial decoration $g(\uprho)=x>0$ and will then use the notation $\P_x$ and $\Q_x$. 
 By rescaling, we mean that 
 the law of the rescaled decorated tree $(x^{\alpha}T,xg)$ under $\Q$ is $\Q_x$, where the notation $x^{\alpha}T$ indicates that distances in $T$ are dilated by a factor $x^{\alpha}$;
  see \cite[Proposition 2.9]{ssMt}. 
 
\section{Some background on local times of L\'evy processes}
\label{sec:LP}
Real valued  -- possibly killed-- L\'evy processes and their local times play a crucial part in this work.
In this section, we briefly recall  some elements that will be needed, and for this it is convenient to introduce the canonic formalism.

We work on the space $\mathbb{D}$ of rcll paths $\xi: \R_+\to \R\cup\{-\infty\}$,  where $-\infty$ serves as a cemetery point at which paths are absorbed.
We write  
$$\zeta:=\inf\{t\geq 0: \xi_t=-\infty\},$$
 for the lifetime of $\xi$ and agree that any function $f$ on $\R$ is extended to $[-\infty, \infty)$ with $f(-\infty)=0$,
so $f(\xi_t)=0$ when $t\geq \zeta$. We endow as usual  $\mathbb{D}$ with the Skorokhod topology
and 
denote the natural filtration by $(\calF_t)_{t\geq 0}$.

We consider a probability measure $P$ on  $\mathbb{D}$  that describes the law of the L\'evy process started from $0$,
and more generally, we denote the law of the version of the L\'evy  process started from $x\in \R$ by $P_x$. In other words, $P_x$ is the distribution of $x+\xi$ under $P$. 
As in the preceding section, we write $\psi$ for the L\'evy exponent and
assume that  either   the L\'evy process drifts to $-\infty$ or is killed at a constant rate.

\subsection{Local time measures and level sets}\label{sec:ltmls}
Throughout this work, we request  the existence of local times 
$$\big(\ell(x,t): x\in \R\text{ and }t\geq 0\big),$$
in the sense that the occupation density formula
\begin{equation}\label{E:occden}
\int_0^{t} f(\xi_s) \dd s = \int_0^{t\wedge \zeta} f(\xi_s) \dd s = \int_{-\infty}^{\infty} f(x) \ell(x,t) \dd x
\end{equation}
holds  for any  measurable function $f: \R\to \R_+$ and $t\geq 0$, $P$-a.s.
In our setting, this is known to be equivalent to the integral condition for the L\'evy exponent $\psi$,  
\begin{equation}\label{E:abs}
\int_{-\infty}^{\infty} \Re\left( \frac{1}{1-\psi(i\theta )}\right)\dd \theta<\infty,
\end{equation}
where the notation $\Re$ refers to the real part of a complex number.
This assertion can be seen from the combination of  \cite[Theorem V.1]{LP} and \cite[Remark 37.7]{Sato}.

We also request  $0$ to be regular for itself, in the sense that
\begin{equation}\label{E:reg}
P(H_0=0)=1,
\end{equation}
where 
$$H_y:=\inf\big\{t>0: \xi_t=y\big\} $$
denotes the first hitting time of a level $y\in \R$. 
We recall from \cite[Proposition V.2]{LP} that we can then choose for each level  $y$ a  version of the local time process $ \ell(y,\cdot)$ 
such that, for any $t'>0$ and $x\in \R$, we have
\begin{equation} \label{E:cvl2}
\lim_{\epsilon \to 0+} \sup_{0\leq t \leq t'} \left | \frac{1}{2\epsilon} \int_0^t \indset{|\xi_s-y|<\epsilon} \dd s-\ell(y,t)\right | =0, \qquad \text{in }L^2(P_x).
\end{equation}
Then the local time process $t\mapsto \ell(y,t)$ is a continuous additive functional, and \eqref{E:cvl2}  suggests to view the local time (Stieltjes) measure $\ell(y,\dd t)$ as the analog of the uniform measure on the level set $\{0\leq t < \zeta: \xi_t=y\}$. In this direction, it is easily checked that the support of the local time measure $\ell(y, \dd t)$  coincides with 
the level set\footnote{Although the sample paths of a L\'evy process are in general discontinuous, its level set at $y$ is closed $P_x$-a.s., since
the probability that a jump of $\xi$ either starts or ends at $y$ is zero. See e.g. the proof of the forthcoming Lemma \ref{L:wellknown}.}  at $y$, $P_x$-a.s.

Plainly,  the local time process reaches its terminal value
$\ell(y,\infty)<\infty$ at its last-passage time
$$K_y=\sup\big\{t\geq 0: \xi_t=y\big\},$$
which is finite $P_x$-a.s., since either the killing rate is strictly positive or the L\'evy process is transient. 
Therefore the occupation formula \eqref{E:occden} extends to $t=\infty$. 
Further, $\ell(0,\infty)$ has an exponential distribution under $P$, and more generally, $\ell(y,\infty)$  follows the same exponential law conditionally on the event $H_y<\infty$. 
We write
$$v(y)= E(\ell(y,\infty)), \qquad y\in \R,$$
and refer to the function $v$ as the potential density\footnote{This is also called the Green function.}, since one has
\begin{equation}\label{E:abspot}
\int_0^{\infty} E\left(f(\xi_t)\right) \dd t = \int_{-\infty}^{\infty} f(y) v(y) \dd y,
\end{equation}
for any measurable function $f\geq 0$. The potential density  is a bounded, continuous and everywhere positive function, see \cite[Theorem II.19]{LP} or \cite[Section 43]{Sato}. It determines hitting probabilities,
\begin{equation}\label{E:hit}
P_x(H_y<\infty)=
P_x(H_y<\zeta)=v(y-x)/v(0);
\end{equation}
in particular $v$ reaches its unique maximum at $0$. We further infer from \eqref{E:hit} that $\lim_{|y|\to \infty}v(y)=0$ when  killing occurs, i.e. $-\psi(0)>0$. In that case, the potential density is also integrable and can be identified as the continuous function with Fourier transform
\begin{equation} \label{E:Fourier}
\int_{-\infty}^{\infty} \e^{i \theta x } v(x) \dd x= -\frac{1}{\psi(i\theta)}, \qquad \theta \in \R.
\end{equation}

 It will be useful later on to check that the approximation \eqref{E:cvl2} also holds $P_x$-almost-surely 
for any $x\in \R$, at least provided that the limit in the left-hand side is taken along some sequence of positive real numbers tending to $0$ fast enough.
We stress that we want to use the same sequence for all initial values $x\in \R$, 
the weaker statement where the sequence may depend on $x$ is indeed plain from \eqref{E:cvl2}. 

\begin{lemma}\label{L:conslt} There exists a sequence $(\epsilon_n)_{n\geq 1}$ of positive real numbers with $\epsilon_n\to0$, such that for any $x,y\in \R$ and $t'>0$,
$$\lim_{n\to \infty} \sup_{0\leq t \leq t'} \left | \frac{1}{2\epsilon_n} \int_0^t \indset{|\xi_s-y|<\epsilon_n} \dd s-\ell(y,t)\right | =0, \qquad P_x\text{-a.s.}$$
As a consequence, $P_x$-a.s.,  the sequence of random measures
$$\frac{1}{2\epsilon_n}  \indset{|\xi_t-y|<\epsilon_n} \dd t$$
converges towards the Stieltjes measure $\ell(y, \dd t)$ in the sense of weak convergence of finite measures  on $\R_+$.
\end{lemma}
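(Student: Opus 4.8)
The plan is to first use spatial homogeneity to collapse the two parameters $x,y$ into a single level, then to extract the sequence $(\epsilon_n)$ by a diagonal/Borel--Cantelli argument over a countable dense set of levels, and finally to upgrade from that dense set to all real levels. Throughout write
\[
A(y,t',\epsilon):=\sup_{0\le t\le t'}\Big|\tfrac{1}{2\epsilon}\int_0^t\indset{|\xi_s-y|<\epsilon}\dd s-\ell(y,t)\Big|.
\]
Since $P_x$ is the law of $x+\xi$ under $P$, and since the occupation density formula \eqref{E:occden} forces (by uniqueness of densities) the local time of $x+\xi$ at level $y$ to coincide with the local time of $\xi$ at level $y-x$, the functional $A(y,t',\epsilon)$ computed under $P_x$ at level $y$ has exactly the same law, indeed the same pathwise value under the coupling $X=x+\xi$, as the corresponding functional at level $y-x$ under $P_0=P$. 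It therefore suffices to produce one sequence $\epsilon_n\to0$ such that, $P$-a.s., $A(z,t',\epsilon_n)\to0$ for every level $z\in\R$ and every $t'>0$; the statement for all pairs $(x,y)$ then follows by specializing to $z=y-x$.

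For the diagonal extraction, fix a level $q$: formula \eqref{E:cvl2} states precisely that $A(q,t',\epsilon)\to0$ in $L^2(P)$ as $\epsilon\to0+$. Enumerate the countable index set $\{(q_k,m_k)\}_{k\ge1}=\Q\times\N$ and build $\epsilon_n\downarrow0$ inductively, small enough that $\|A(q_k,m_k,\epsilon_n)\|_{L^2(P)}\le 2^{-n}$ for all $k\le n$. Then for each fixed $k$ the quantities $\|A(q_k,m_k,\epsilon_n)\|_{L^2}^2$ are summable in $n$, so Chebyshev's inequality and the Borel--Cantelli lemma yield $A(q_k,m_k,\epsilon_n)\to0$ $P$-a.s. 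Intersecting these countably many full-probability events, and using that $A(\cdot,t',\cdot)$ is nondecreasing in $t'$ to pass from integer horizons to all $t'$, we obtain a single event of full probability on which $A(q,t',\epsilon_n)\to0$ for every rational level $q$ and every $t'>0$.

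The passage from rational levels to all levels is, I expect, the \emph{main obstacle}. Applying \eqref{E:occden} with $f=\indset{(z-\epsilon,z+\epsilon)}$ gives, for each fixed $t$, $P$-a.s.,
\[
\tfrac{1}{2\epsilon}\int_0^t\indset{|\xi_s-z|<\epsilon}\dd s=\tfrac{1}{2\epsilon}\int_{z-\epsilon}^{z+\epsilon}\ell(u,t)\,\dd u,
\]
so the approximation is a symmetric average of the occupation density in the level variable. As $t\mapsto\ell(z,t)$ is continuous and both sides are nondecreasing in $t$, a P\'olya/Dini argument reduces the required sup-in-$t$ convergence to pointwise-in-$t$ convergence, and by the displayed identity the latter amounts to these shrinking-interval averages of $u\mapsto\ell(u,t)$ converging to $\ell(z,t)$ at \emph{every} level $z$. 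The difficulty is that such averages converge automatically only at Lebesgue points of $\ell(\cdot,t)$, so to reach every $z$ on one common event one must control the regularity of the local time in the level variable. I would settle this either by exhibiting a version of the field $(z,t)\mapsto\ell(z,t)$ that is continuous in $z$ (whence the symmetric averages converge everywhere), or, avoiding continuity hypotheses, by establishing a uniform-in-$\epsilon$ moment bound on the level-increments of $A(\cdot,t',\epsilon)$ and running a Garsia--Rodemich--Rumsey / Kolmogorov--Chentsov chaining argument, which upgrades the rational-level convergence above to simultaneous convergence over all $z$ on a single null set.

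Finally, for the stated consequence: once the first assertion holds, the random measures $\tfrac{1}{2\epsilon_n}\indset{|\xi_t-y|<\epsilon_n}\dd t$ have distribution functions $t\mapsto\tfrac{1}{2\epsilon_n}\int_0^t\indset{|\xi_s-y|<\epsilon_n}\dd s$ converging uniformly on compacts to the continuous distribution function $t\mapsto\ell(y,t)$ of the Stieltjes measure $\ell(y,\dd t)$. Uniform convergence of nondecreasing functions to a continuous limit entails weak convergence of the associated finite measures on $\R_+$, which is exactly the asserted convergence.
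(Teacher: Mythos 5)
There is a genuine gap, and it sits exactly where you flag the ``main obstacle''. First, note that you have over-read the statement: the lemma only asserts that the sequence $(\epsilon_n)$ is universal, while the $P_x$-null set is allowed to depend on $(x,y)$; you do not need a single full-probability event on which $A(z,t',\epsilon_n)\to0$ for \emph{all} levels $z$ simultaneously. But even under the correct, weaker reading your argument is incomplete: the diagonal/Borel--Cantelli extraction only covers a countable dense set of levels, and a fixed irrational $z=y-x$ is not among them. The two devices you propose to bridge this are both unavailable here. Joint continuity of $(z,t)\mapsto\ell(z,t)$ in the space variable is \emph{not} a consequence of \eqref{E:abs} and \eqref{E:reg} (it requires strictly stronger hypotheses of Barlow--Getoor--Kesten type, and the paper explicitly works without it; see the discussion following Theorem \ref{T1}), and a Garsia--Rodemich--Rumsey or Kolmogorov--Chentsov chaining argument would require moment bounds on level increments that are essentially tantamount to that continuity. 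At a fixed level, \eqref{E:cvl2} only gives $L^2$ convergence of the symmetric averages $\frac{1}{2\epsilon}\int_{z-\epsilon}^{z+\epsilon}\ell(u,t)\,\dd u$, so a fixed $z$ need not be a Lebesgue point of $\ell(\cdot,t)$ almost surely, and the Lebesgue-point argument does not close the gap.

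The paper's route avoids the problem entirely and supplies the missing idea. Reduce to $y=0$ by translation (as you do), extract one sequence giving $P$-a.s.\ convergence for the single starting point $x=0$, and then handle an arbitrary starting point $x$ by the Markov property at the hitting time $H_0$: after $H_0$ the path is a copy started from $0$, so the already-established case applies with the \emph{same} sequence, and since $\ell(0,H_0)=0$ it only remains to show that $\frac{1}{2\epsilon_n}\int_0^{H_0}\indset{|\xi_t|<\epsilon_n}\dd t\to0$, $P_x$-a.s. This is done by a first-moment bound: the process killed at $H_0$ has potential density $v^\dagger(x,y')=v(y'-x)-\frac{v(-x)}{v(0)}v(y')$, which by uniform continuity of $v$ can be made at most $2^{-n}$ for $|y'|<\epsilon_n$ \emph{uniformly in} $x$ along a further subsequence; Markov's inequality and Borel--Cantelli then finish. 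In other words, the uniformity over starting points (equivalently, over levels) is bought with the potential density of the killed process, not with regularity of the local time in the space variable. Your final paragraph on deducing weak convergence of the measures is fine once the first assertion is in place.
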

\begin{proof} There is no loss of generality to assume that $y=0$ and that the killing rate is positive. Consider first the case when $x=0$.
It follows from \eqref{E:cvl2}
that there is a sequence $(\epsilon'_n)_{n\geq 0}$ of positive real numbers tending to $0$ fast enough so that the convergence in the claim holds  $P$-a.s.
Now let the L\'evy process start from an arbitrary $x\in \R$ and apply the Markov property at the first hitting time of $0$.
Since $\ell(0,H_0)=0$, 
all that we need now is to check that, uniformly in $x$, the time spent by $\xi$ before time $H_0$ in an interval $(-\epsilon, \epsilon)$ is asymptotically small compared to $\epsilon\ll 1$ with high probability under $P_x$.

So consider the L\'evy process started from $x$ and killed when hitting $0$, $(\xi_t)_{0\leq t < H_0}$. By \eqref{E:hit}, the latter is a Markov process with potential density  $v^\dagger$ given by
$$v^\dagger(x,y')=v(y'-x)-\frac{v(-x)}{v(0)} v(y'), \qquad y'\in \R.$$
Since $v$ is uniformly continuous (recall that we assumed that $-\psi(0)>0$ so that $v\in \mathcal C^0$), we can extract from the sequence $(\epsilon'_n)_{n\geq 0}$ a sub-sequence $(\epsilon_n)_{n\geq 0}$ such that
$$v^\dagger(x,y')\leq 2^{-n} \qquad \text{ for all $x\in \R$,  $n\geq 0$, and $|y'|<\epsilon_n$. }$$
In particular, we have
$$E_x\left(\frac{1}{2\epsilon_n} \int_0^{H_0} \indset{|\xi_t|<\epsilon_n} \dd t \right) = \frac{1}{2\epsilon_n} \int_{-\epsilon_n}^{\epsilon_n} v^\dagger(x,y')\dd y' \leq 2^{-n},$$
and a standard combination of the Borel-Cantelli lemma and the Markov inequality entails that 
$$\lim_{n\to \infty} \frac{1}{2\epsilon_n} \int_0^{H_0} \indset{|\xi_t|<\epsilon_n} \dd t= 0, \qquad P_x\text{-a.s.}$$
This completes the proof of the first claim in the statement. The second claim is then clear.
\end{proof}

\subsection{Conditioning on  the terminal value} \label{sec:condter}
We suppose throughout this section that the killing rate is positive, so that the lifetime $\zeta$ 
is exponentially distributed with parameter $\Lambda(\{-\infty\})=-\psi(0)>0$ under $P_x$  for any $x\in \R$. 
It is readily seen from the Fubini theorem that  the potential density then also naturally appears in the distribution of the terminal value, i.e. the location of the L\'evy process
at the end of its lifetime. Specifically one has
\begin{equation}\label{E:ter}P_x( \xi_{\zeta-}\in \dd y) = -\psi(0) v(y-x)\dd y, \qquad x,y\in \R.
\end{equation}

Our main purpose in this section is to rigorously define, and then present some basic properties of, a version of the conditional distribution of the L\'evy process given its terminal value. Such questions have been addressed in depth by Fourati \cite{Fourati} (see also the references therein) in a more general setting than ours,  using an approach  \textit{\`a la }Doob combined with so-called Kuznetsov measures. We shall address first conditioning on  a finite terminal value $\xi_{\zeta } = y$ for $y\in \R$, and we shall then turn our attention to the case ``$y = - \infty$'' by studying the weak limit as $y\to -\infty$.

\subsubsection{Finite terminal value}
For any fixed 
 $y\in \R$,  the function $x\mapsto v(y-x)$ is excessive for the L\'evy process and the process $v(y-\xi_{\cdot})$ is a $P_x$-supermartingale for all $x\in \R$.
This incites us to define a probability measure $P_{x,y}$ on $\mathbb{D}$ 
such that  for every $t\geq 0$ and every event $A\in \calF_t$, we have:
\begin{equation}\label{E:Doob}
P_{x,y}\left(A \cap \{t<\zeta\}\right) = \frac{1}{v(y-x)} E_{x}\left( v(y-\xi_t)\indset A\right).
\end{equation}
To make the connection with conditioning on the terminal value, 
observe from \eqref{E:ter}, the Markov property, and the continuity of the potential density, that the conditional probability
$$P_{x}\left(  A \cap \{t<\zeta\} \mid  \xi_{\zeta-}\in(y-\epsilon,y+\epsilon)\right )$$
converges as $\epsilon\to 0+$ towards the right-hand side of \eqref{E:Doob}.

There is actually a technical difficulty with this approach, namely one has to check that  \eqref{E:Doob}  yields indeed a probability measure $P_{x,y}$ on the space $\mathbb{D}$ of rcll paths absorbed at $-\infty$ (the issue is to establish the left-continuity at lifetime) and that $P_{x,y}$-a.s., the terminal value is  $\xi_{\zeta-}=y$.
Nonetheless, this problem can be circumvented  in our setting with the following well-known direct construction in terms of killing at a last-passage time.

\begin{lemma}\label{L:lastpt}  Under the conditional law $P_x(\cdot \mid H_y<\infty)$, the distribution of the rcll process obtained by killing $\xi$ at its last-passage at $y$,
$(\xi_t)_{0\leq t < K_y}$, satisfies  \eqref{E:Doob}.
\end{lemma}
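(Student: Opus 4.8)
The plan is to verify that the killed process $(\xi_t)_{0\le t< K_y}$ under $P_x(\,\cdot\mid H_y<\infty)$ satisfies the Doob-type identity \eqref{E:Doob}. First I would reduce the computation to a statement about the joint law of the first hitting time $H_y$ and the last-passage time $K_y$. The key structural observation is that under $P_x$ the full trajectory decomposes at the last-passage time $K_y$: conditionally on the past up to time $t$ (with $t<K_y$), the event $\{t<K_y\}$ is exactly the event that the level $y$ is hit again after time $t$. By the Markov property applied at time $t$, this probability is $P_{\xi_t}(H_y<\infty)=v(y-\xi_t)/v(0)$, using the hitting-probability formula \eqref{E:hit}. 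This is the heart of the matter, since it converts killing at $K_y$ into a multiplicative functional proportional to $v(y-\xi_t)$.

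Concretely, I would fix $t\ge 0$ and an event $A\in\calF_t$ and compute, for the process killed at $K_y$,
\begin{equation*}
P_x\big(A\cap\{t<K_y\}\big) = E_x\big(\indset A\,P_{\xi_t}(H_y<\infty)\big)= \frac{1}{v(0)}\,E_x\big(v(y-\xi_t)\indset A\big),
\end{equation*}
where the first equality is the Markov property at time $t$ (noting that on $\{t<K_y\}$ the process has not yet been absorbed, so $\xi_t\in\R$ and $A$ only constrains the pre-$t$ trajectory) and the second is \eqref{E:hit}. Dividing both sides by $P_x(H_y<\infty)=v(y-x)/v(0)$ to pass to the conditional law $P_x(\,\cdot\mid H_y<\infty)$ yields precisely
\begin{equation*}
P_x\big(A\cap\{t<K_y\}\mid H_y<\infty\big)=\frac{1}{v(y-x)}\,E_x\big(v(y-\xi_t)\indset A\big),
\end{equation*}
which is the right-hand side of \eqref{E:Doob}. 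Thus the killed process, re-indexed so that $K_y$ plays the role of the lifetime $\zeta$, satisfies the desired identity.

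The step I expect to require the most care is the handling of the last-passage time as a (non-stopping) random time and the justification that the Markov property can be applied as above despite $K_y$ not being a stopping time. The clean way around this is precisely to avoid conditioning on $K_y$ directly: the identity $\{t<K_y\}=\{\exists\,s>t:\xi_s=y\}$ is $\calF_t$-measurable only after applying the Markov property, so one works with $E_x(\indset A\,\indset{H_y\circ\theta_t<\infty})$ and invokes the ordinary Markov property at the deterministic time $t$. One should also confirm that the resulting path is genuinely rcll and absorbed at $-\infty$ after $K_y$, and that the regularity assumption \eqref{E:reg} guarantees $K_y$ is a point of right-accumulation of the level set so that no pathology arises at the terminal time; the finiteness of $K_y$ $P_x$-a.s. (recorded earlier via transience or positive killing) ensures the construction is well defined on $\{H_y<\infty\}$.
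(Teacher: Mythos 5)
Your argument is correct and is essentially identical to the paper's own proof: both rest on the observation that $\{t<K_y\}\subset\{0<K_y\}=\{H_y<\infty\}$ and that the Markov property at the deterministic time $t$ combined with \eqref{E:hit} gives $P_x(t<K_y\mid\calF_t)=v(y-\xi_t)/v(0)$, after which dividing by $P_x(H_y<\infty)=v(y-x)/v(0)$ yields \eqref{E:Doob}. Your additional remarks on the rcll structure at the terminal time are sensible but not needed beyond what the paper records.
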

\begin{remark}Plainly for $x=y$, we have $P_x(\cdot \mid H_x<\infty)=P_x$, thanks to \eqref{E:reg}. \end{remark}
\begin{proof} It suffices to observe that $K_y>0$ if and only if $H_y<\infty$, and from \eqref{E:hit} and the Markov property, that 
$$P_x(t<K_y \mid \mathcal F_t) = v(y-\xi_t)/v(0).$$
We infer 
 that for any $x,y\in \R$, any $t\geq 0$ and any event $A\in \calF_t$, we have 
$$P_x\left( A \cap \{t<K_y\}\mid H_y<\infty \right) = \frac{1}{v(y-x)} E_{x}\left( v(y-\xi_t) \indset A \right).
$$
\end{proof}
\noindent Since   finite-dimensional distributions determine probability measures on $\mathbb{D}$, Lemma~\ref{L:lastpt} now makes clear the existence and uniqueness  of the law  $P_{x,y}$ on  $\mathbb{D}$  satisfying \eqref{E:Doob}. 
We can then check that  the family $(P_{x,y})_{y\in \R}$  provides the regular disintegration of the law $P_x$ with respect to the terminal value.

\begin{proposition}\label{P:disint} 
For every  starting point $x\in \R$, the map $y\mapsto P_{x,y}$ with values in the space
of probability measures on the Skorokhod space $\mathbb{D}$ endowed with the topology of weak convergence, is continuous, and one has the disintegration identity
\begin{equation}\label{equation:desintegration}
    P_x=-\psi(0)\int_{-\infty}^{\infty}  v(y-x) P_{x,y} \dd y.
\end{equation}
\end{proposition}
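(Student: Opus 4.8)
The plan is to establish the disintegration identity \eqref{equation:desintegration} first, and then deduce the continuity of $y\mapsto P_{x,y}$ as a consequence. For the disintegration, I would verify the identity on finite-dimensional cylinder events, since these generate the Borel $\sigma$-algebra on $\mathbb{D}$ and a measure on $\mathbb{D}$ is determined by its finite-dimensional distributions. So fix $0\leq t_1<\cdots<t_k$ and a bounded measurable test function $F$ on $(\R\cup\{-\infty\})^k$. The right-hand side, when integrated against $F(\xi_{t_1},\ldots,\xi_{t_k})$, becomes
\begin{equation*}
-\psi(0)\int_{-\infty}^{\infty} v(y-x)\, E_{x,y}\!\left(F(\xi_{t_1},\ldots,\xi_{t_k})\right) \dd y.
\end{equation*}
Using the defining relation \eqref{E:Doob} (applied at the largest time $t_k$, together with the fact that under $P_{x,y}$ the path is absorbed at its terminal value $y$ so that the factor $v(y-\xi_{t_k})/v(y-x)$ correctly weights the event $\{t_k<\zeta\}$ against the killed coordinates), I would rewrite $v(y-x)E_{x,y}(\cdots)$ as an expectation under $P_x$ of the form $E_x(v(y-\xi_{t_k})\,F(\ldots)\indset{t_k<\zeta})$ plus the contribution of paths already killed before $t_k$.

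The key calculation is then to integrate in $y$ and recognize the terminal-value formula \eqref{E:ter}. Applying the Markov property at time $t_k$, the quantity $\int_{-\infty}^{\infty}(-\psi(0))v(y-\xi_{t_k})\dd y\cdot(\text{stuff})$ is exactly the probability, starting afresh from $\xi_{t_k}$, of eventually terminating at any finite value, which by \eqref{E:ter} integrates to $P_{\xi_{t_k}}(\zeta<\infty)=1$ under the positive killing rate. Thus integrating out $y$ collapses the Doob $h$-transform weight and returns precisely $E_x(F(\xi_{t_1},\ldots,\xi_{t_k}))$, i.e. the left-hand side of \eqref{equation:desintegration} evaluated on the same cylinder. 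One must keep careful track of the coordinates $\xi_{t_i}=-\infty$ corresponding to $t_i\geq\zeta$; here the convention $f(-\infty)=0$ and the structure of \eqref{E:ter} (which only charges finite $y$) ensure the accounting is consistent, and the case distinction between paths killed before or after $t_k$ matches the two regimes $\{t_k<\zeta\}$ and $\{t_k\geq\zeta\}$ on the left.

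For the continuity statement, I would argue that the map $y\mapsto P_{x,y}$ is weakly continuous by showing convergence of finite-dimensional distributions together with tightness. Convergence of the finite-dimensional marginals follows from \eqref{E:Doob}: as $y'\to y$, the weight $v(y'-\xi_{t})/v(y'-x)$ converges to $v(y-\xi_{t})/v(y-x)$ pointwise by the continuity of the potential density $v$ (guaranteed after \eqref{E:Fourier}), and dominated convergence applies since $v$ is bounded and $v(y'-x)$ stays bounded away from $0$ locally in $y'$. Tightness in the Skorokhod topology can be inherited from that of $P_x$ itself, because each $P_{x,y}$ is absolutely continuous with respect to a common reference (the killed process) with uniformly bounded densities on compact $y$-ranges. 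The step I expect to be the main obstacle is the bookkeeping at the lifetime in the disintegration: correctly splitting the cylinder functional according to whether the terminal passage occurs before or after $t_k$, and verifying that integrating the Doob weight against $\dd y$ reproduces the normalization $P_{\xi_{t_k}}(\zeta<\infty)=1$ rather than leaving a residual term. Once that identity is confirmed on cylinders, extension to all of $\mathbb{D}$ and the continuity of the disintegration are routine.
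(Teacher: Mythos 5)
Your proposal is correct and uses essentially the same ingredients as the paper: the disintegration follows from \eqref{E:Doob} together with \eqref{E:ter} (your cylinder computation spells out what the paper declares ``plain''), and the continuity is obtained from finite-dimensional convergence via the continuity of $v$ plus a tightness argument that dominates $P_{x,y}$ by a constant multiple of $P_x$ uniformly for $y$ in compacts, which is exactly the bound the paper extracts from the last-passage representation of Lemma~\ref{L:lastpt}. The only cosmetic difference is the order (you prove the disintegration before the continuity, the paper does the reverse), and your phrase ``absolutely continuous with uniformly bounded densities'' should be understood as holding on each $\calF_t$ restricted to $\{t<\zeta\}$, which is all the tightness criterion of \cite[Theorem VI.3.21]{JS} requires.
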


\begin{proof} Consider any sequence $(y_n)_{n\geq 0}$ of real numbers that tends to $y$.
 It is plain from \eqref{E:Doob} and the continuity of $v$ that $P_{x,y_n}$ converges to $P_{x,y}$ as $n\to \infty$, in the sense of finite-dimensional distributions.
On the other hand, the representation of $P_{x,y}$ as the law of the L\'evy process killed at its last passage time at $y$ in Lemma \ref{L:lastpt} enables us to check
the general criterion for tightness in \cite[Theorem VI.3.21]{JS} for the sequence $(P_{x,y_n})_{n\geq 0}$. Indeed, it allows us to bound from above the probabilities there 
computed under the laws $P_{x,y_n}$ by those computed under $P_x$ up to a constant factor. We conclude that as $n\to \infty$, $P_{x,y_n}$ converges to $P_{x,y}$
weakly on $\mathbb{D}$. The disintegration in the statement now makes sense and is plain from \eqref{E:ter} and \eqref{E:Doob}.
\end{proof}

We next point at an elementary identity involving the Esscher transform, which will be important for the present study.
\begin{lemma} \label{L:expofam} Take any  $\beta\in \R$ with  $\psi(\beta)<0$ and write
 $\psi^{(\beta)}(\cdot)=\psi(\beta+\cdot)$ for the shifted L\'evy exponent. Then the following assertions hold:
\begin{itemize} 
\item[(i)]  $\psi^{(\beta)}$ is the L\'evy exponent of another killed L\'evy process,  whose law  $P^{(\beta)}$ 
is equivalent to $P$ with density
\begin{equation} \label{E:tilted}\frac{\dd P^{(\beta)}}{\dd P}= \frac{\psi(\beta)}{\psi(0)}\exp(\beta \xi_{\zeta-}).
\end{equation}
Furthermore, the potential density $v^{(\beta)}$ of this $\beta$-tilted L\'evy process is
$$v^{(\beta)}(x)=\e^{\beta x} v(x), \quad  x \in \mathbb{R}.$$

\item[(ii)] For every $x,y\in \R$, the conditional laws $P_{x,y}$ and $P^{(\beta)}_{x,y}$ are identical.
\end{itemize}
\end{lemma}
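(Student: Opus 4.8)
The plan is to treat the whole statement as an Esscher-transform computation, the key point being that the terminal-value tilt by $\exp(\beta\xi_{\zeta-})$ reduces, when restricted to $\calF_t\cap\{t<\zeta\}$, to the pointwise tilt by $\exp(\beta\xi_t)$. First I would record that since $\psi(0)<0$ (positive killing) and $\psi(\beta)<0$ by hypothesis, the prefactor $\psi(\beta)/\psi(0)$ is positive, so \eqref{E:tilted} at least defines a nonnegative density. To see that it integrates to $1$, I would apply the potential formula \eqref{E:abspot} to $f(y)=\e^{\beta y}$: since $\e^{\beta\xi_t}\indset{t<\zeta}$ has $P$-expectation $\exp(t\psi(\beta))$, integrating in $t$ gives $\int_{\R}\e^{\beta y}v(y)\,\dd y=-1/\psi(\beta)$ (the integral converges precisely because $\psi(\beta)<0$). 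Combining this with the law of the terminal value \eqref{E:ter} started from $0$, namely $P(\xi_{\zeta-}\in\dd y)=-\psi(0)v(y)\,\dd y$, yields $E(\exp(\beta\xi_{\zeta-}))=\psi(0)/\psi(\beta)$, so the right-hand side of \eqref{E:tilted} has $P$-expectation $1$ and $P^{(\beta)}$ is a probability measure equivalent to $P$.

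The heart of (i) is then to identify the restriction of this density to $\calF_t$ on $\{t<\zeta\}$. Conditioning on $\calF_t$ and applying the Markov property, the terminal value decomposes on $\{t<\zeta\}$ as $\xi_t$ plus an independent fresh copy of $\xi_{\zeta-}$, whence $E(\exp(\beta\xi_{\zeta-})\mid\calF_t)\indset{t<\zeta}=\tfrac{\psi(0)}{\psi(\beta)}\exp(\beta\xi_t)\indset{t<\zeta}$ using the value just computed. Multiplying by $\psi(\beta)/\psi(0)$ shows that $\frac{\dd P^{(\beta)}}{\dd P}$ restricted to $\calF_t\cap\{t<\zeta\}$ is simply $\exp(\beta\xi_t)$. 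From here, for any admissible $\gamma$, $E^{(\beta)}(\exp(\gamma\xi_t)\indset{t<\zeta})=E(\exp((\gamma+\beta)\xi_t)\indset{t<\zeta})=\exp(t\psi(\beta+\gamma))$, which is exactly the assertion that under $P^{(\beta)}$ the process $\xi$ is a killed L\'evy process with exponent $\psi^{(\beta)}=\psi(\beta+\cdot)$ (the killing rate being $-\psi(\beta)>0$). Finally, feeding the restricted density into \eqref{E:abspot} once more, $\int_0^\infty E^{(\beta)}(f(\xi_t))\,\dd t=\int_0^\infty E(f(\xi_t)\e^{\beta\xi_t}\indset{t<\zeta})\,\dd t=\int_\R f(y)\e^{\beta y}v(y)\,\dd y$, so the potential density of the tilted process is $v^{(\beta)}(x)=\e^{\beta x}v(x)$, proving (i).

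For (ii), I would first transport the restricted density to arbitrary starting points: since $P_x$ and $P^{(\beta)}_x$ are the laws of $x+\xi$ under $P$ and $P^{(\beta)}$ respectively, the same computation gives $\frac{\dd P^{(\beta)}_x}{\dd P_x}$ on $\calF_t\cap\{t<\zeta\}$ equal to $\exp(\beta(\xi_t-x))$. Now write the defining relation \eqref{E:Doob} for the tilted process, $P^{(\beta)}_{x,y}(A\cap\{t<\zeta\})=\frac{1}{v^{(\beta)}(y-x)}E^{(\beta)}_x(v^{(\beta)}(y-\xi_t)\indset{A})$, substitute $v^{(\beta)}=\e^{\beta\cdot}v$ in both occurrences, and replace $E^{(\beta)}_x$ by $E_x$ at the cost of the factor $\exp(\beta(\xi_t-x))$. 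The three exponential contributions combine as $\e^{-\beta(y-x)}\cdot\e^{\beta(y-\xi_t)}\cdot\e^{\beta(\xi_t-x)}=1$, so the expression collapses exactly to the right-hand side of \eqref{E:Doob} for $P_{x,y}$. Thus $P^{(\beta)}_{x,y}$ and $P_{x,y}$ agree on every event $A\cap\{t<\zeta\}$ with $A\in\calF_t$; since, as noted after Lemma \ref{L:lastpt}, finite-dimensional distributions determine laws on $\mathbb{D}$, the two measures coincide.

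The step that deserves the most care — and which I regard as the main obstacle — is the conditional-expectation identity $E(\exp(\beta\xi_{\zeta-})\mid\calF_t)\indset{t<\zeta}=\tfrac{\psi(0)}{\psi(\beta)}\exp(\beta\xi_t)\indset{t<\zeta}$, since everything else cascades from it. It rests on the (intuitively clear but formally needed) fact that, conditionally on $\calF_t$ and on survival up to time $t$, the terminal value is $\xi_t$ shifted by an independent terminal value of a fresh copy of the process; one must also ensure that the exponential moment $E(\exp(\beta\xi_{\zeta-}))$ is genuinely finite, which is precisely what $\psi(\beta)<0$ guarantees through the convergence of $\int_\R\e^{\beta y}v(y)\,\dd y$. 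The remaining verifications are routine bookkeeping with the potential formula \eqref{E:abspot} and the cancellation of the exponential factors.
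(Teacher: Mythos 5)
Your proposal is correct and follows essentially the same route as the paper: verify that the density has expectation $1$ via \eqref{E:ter} and the potential formula, use the decomposition $\exp(\beta\xi_{\zeta-})=\exp(\beta\xi_t)\exp(\beta(\xi_{\zeta-}-\xi_t))$ on $\{t<\zeta\}$ together with the Markov property to show the density restricted to $\calF_t\cap\{t<\zeta\}$ is $\exp(\beta\xi_t)$, identify the exponent $\psi^{(\beta)}$ and the potential density from there, and obtain (ii) by substituting $v^{(\beta)}=\e^{\beta\cdot}v$ into \eqref{E:Doob} and cancelling the exponential factors. You merely spell out the details (the value $E(\exp(\beta\xi_{\zeta-}))=\psi(0)/\psi(\beta)$ and the explicit cancellation in (ii)) that the paper's terser proof leaves implicit.
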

\begin{proof} (i) It follows from \eqref{E:ter} that the positive random variable $\frac{\psi(\beta)}{\psi(0)}\exp(\beta \xi_{\zeta-})$ has expectation $1$ under $P$. Thus 
\eqref{E:tilted} defines a probability measure $P^{(\beta)}$ on the space $\mathbb{D}$ of rcll paths, which may be referred to as the $\beta$-tilted law.
Take any $t\geq 0$ and write $\exp(\beta \xi_{\zeta-})= \exp(\beta \xi_t) \exp(\beta (\xi_{\zeta-}-\xi_t))$ on the event $t<\zeta$.
We deduce from the Markov property of $\xi$ that for
every event $A\in \calF_t$, there is the identity
$$P^{(\beta)}\left( A \cap \{t<\zeta\}\right )=  E\left( \exp(\beta \xi_t)\indset A\right).$$
We then readily infer 
 (e.g. by computing the characteristic functions of increments) that $P^{(\beta)}$ is indeed the law of another L\'evy process with L\'evy exponent $\psi^{(\beta)}$.
 The stated formula for its potential density is immediate.

(ii) The identity $P_{x,y}=P^{(\beta)}_{x,y}$ follows readily from (i),  \eqref{E:Doob} and  \eqref{E:tilted}. \end{proof}

We conclude this section with two more results about the conditioned L\'evy process.
The first is a time-reversal identity, which was also discussed by Fourati \cite{Fourati}.
The opposite $-\xi$ of a L\'evy process is again a L\'evy process; we write 
$\hat P_x$ for its law started from $x$, i.e. for the law of $-\xi$ under $P_{-x}$. 
The following claim is readily seen from the elementary duality lemma \cite[Section II.1]{LP} combined with Proposition \ref{P:disint}. 

\begin{corollary}\label{C:dual} For any $x,y\in \R$, the law under $P_{x,y}$ of the time-reversed process,
$$\xi_{(\zeta-t)-}, \quad 0\leq t < \zeta,$$ 
is $\hat P_{y,x}$.
\end{corollary}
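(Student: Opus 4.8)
The plan is to combine the elementary duality lemma for L\'evy processes with the disintegration identity of Proposition~\ref{P:disint}. Write $q=-\psi(0)>0$ for the killing rate and let $R$ denote the time-reversal-at-lifetime map, sending a path $\xi$ with lifetime $\zeta$ to $(\xi_{(\zeta-t)-})_{0\le t<\zeta}$. Since $\hat\xi=-\xi$, the potential density of the dual process is $\hat v(\cdot)=v(-\cdot)$ and its killing rate is again $q$; in particular the dual analogues of \eqref{E:ter} and of Proposition~\ref{P:disint} hold, and the conditioned dual laws $\hat P_{y,x}$ are well defined through the construction of Section~\ref{sec:condter} applied to $-\xi$. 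Note that under $R$ the roles of the initial and terminal values are exchanged: under $P_{x,y}$ we have $\xi_0=x$ and $\xi_{\zeta-}=y$, so $R$ produces a path started at $y$ and terminating at $x$, consistent with the target law $\hat P_{y,x}$.

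First I would establish the unconditioned reversal identity
\begin{equation}\label{E:revid}
R_*P_x = q\int_{-\infty}^{\infty} v(y-x)\,\hat P_{y,x}\,\dd y .
\end{equation}
To this end, recall the duality lemma \cite[Section II.1]{LP}: for every fixed $t\ge 0$, the reversed path $(\xi_t-\xi_{(t-s)-})_{0\le s\le t}$ has the same law under $P$ as $(\xi_s)_{0\le s\le t}$. Applied under $P_x$, this shows that over a fixed horizon $t$ the reversed path $(x+\xi_{(t-s)-})_{0\le s\le t}$ is distributed as $s\mapsto (x+\xi_t)-\xi_s$, that is, as a dual L\'evy path run from the random point $x+\xi_t$ and pinned at $x$ at time $t$. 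Since the lifetime $\zeta$ is an independent exponential variable of rate $q$, its law is invariant under the reversal, and folding the fixed-horizon statement against the killing yields \eqref{E:revid}: the right-hand side is read off from the dual versions of \eqref{E:ter} (giving the law $q\,v(y-x)\,\dd y$ of the reversed initial value $\xi_{\zeta-}$) and of Lemma~\ref{L:lastpt} (identifying the pinned dual path from $y$ to $x$ with $\hat P_{y,x}$).

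I would then match two disintegrations of $R_*P_x$ over the initial value of the reversed path. On one hand, applying $R$ to the disintegration of Proposition~\ref{P:disint} gives
$$R_*P_x = q\int_{-\infty}^{\infty} v(y-x)\,R_*P_{x,y}\,\dd y ,$$
and since $R_*P_{x,y}$ is carried by paths with initial value $y$ while $\xi_{\zeta-}$ has law $q\,v(y-x)\,\dd y$ under $P_x$, this is precisely the disintegration of $R_*P_x$ with respect to its initial value. On the other hand, the right-hand side of \eqref{E:revid} is the same disintegration, since $\hat P_{y,x}$ is carried by paths with initial value $y$ and the mixing measure coincides. By uniqueness of the disintegration we obtain $R_*P_{x,y}=\hat P_{y,x}$ for Lebesgue-almost every $y$, hence for all $x,y$: indeed $y\mapsto R_*P_{x,y}$ and $y\mapsto \hat P_{y,x}$ are both weakly continuous, the former by Proposition~\ref{P:disint} together with the continuity of $R$ along the laws $P_{x,y}$ (which are carried by paths left-continuous at their lifetime, with terminal value $y$), and the latter by the spatial-homogeneity identity $\hat P_{y,x}=y+\hat P_{0,x-y}$, immediate from \eqref{E:Doob}, combined with the continuity statement of Proposition~\ref{P:disint} for the dual process.

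The main obstacle is the rigorous justification of \eqref{E:revid}, namely reversing the \emph{killed} process at the random time $\zeta$ (equivalently, via Lemma~\ref{L:lastpt}, at the last-passage time $K_y$): the fixed-horizon duality lemma must be combined with the memorylessness of the exponential lifetime so that the random endpoint produced by the reversal acquires exactly the law $q\,v(y-x)\,\dd y$ and the reversed path becomes the pinned dual law $\hat P_{y,x}$. Once \eqref{E:revid} is in place, the remainder is the soft disintegration-and-continuity argument above.
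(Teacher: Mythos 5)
Your proposal is correct and follows essentially the same route as the paper, whose proof consists precisely of invoking the elementary duality lemma of \cite[Section II.1]{LP} together with the disintegration identity of Proposition~\ref{P:disint}; you have simply written out the details (the reversal identity obtained by folding the fixed-horizon duality against the independent exponential lifetime, the matching of the two disintegrations over the initial value of the reversed path, and the continuity argument upgrading the a.e.\ identity to all $y$). The only point deserving care is the weak continuity of $y\mapsto R_*P_{x,y}$, which you correctly reduce to the left-continuity of the paths at their lifetime under $P_{x,y}$; this is one of the routine verifications the paper explicitly chooses to omit.
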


Finally, an easy consequence of \eqref{E:cvl2} is that the process $v(y-\xi_t)+\ell(y,t)$ for $t\geq 0$ is a $P_x$-martingale for all $x,y\in \R$.
Its terminal value $\ell(y, \infty)= \ell(y,\zeta)$ has expectation $v(y-x)$, and more precisely,  is exponentially distributed with mean $v(0)$ under $P_x(\cdot \mid H_y<\infty)$. 
Roughly speaking, this entails that conditioning on the terminal value  can also be interpreted as a killing according to the local time measure.
Here is the precise connection. 

\begin{lemma} \label{L:ltbiased}  For every $x,y\in \R$,  the identity
$$E_{x,y}\left(A_{\zeta}\right) = \frac{1}{v(y-x)} E_x\left( \int_0^{\zeta} A_t \ell(y, \dd t )  \right) $$
holds for any bounded $(\calF_t)$-predictable process $(A_t)_{t\geq 0}$.
\end{lemma}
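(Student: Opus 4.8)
The plan is to reduce the identity, by a monotone-class argument, to elementary predictable processes, and then to read off the two sides from the Doob-type formula \eqref{E:Doob} and from the martingale property of $t\mapsto v(y-\xi_t)+\ell(y,t)$ recorded just before the statement.

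First I would fix $x,y\in\R$ and note that both sides of the asserted identity are linear in the process $(A_t)$ and, by monotone convergence (the local time measure $\ell(y,\dd t)$ is a finite positive measure with total mass $\ell(y,\infty)\in L^1(P_x)$), stable under bounded monotone limits. Since the predictable sets $(s,\infty)\times A$, with $s\geq 0$ and $A\in\calF_s$, form a $\pi$-system generating the predictable $\sigma$-algebra (the trace on $\{0\}$ may be ignored, as $\ell(y,\cdot)$ has no atom at $0$ and $P_{x,y}(\zeta=0)=0$), it suffices to establish the identity for the elementary process $A_t=\indset{A}\indset{s<t}$ with $A\in\calF_s$.

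For such a process the left-hand side is $E_{x,y}\!\left(\indset{A}\indset{s<\zeta}\right)=P_{x,y}(A\cap\{s<\zeta\})$, which by \eqref{E:Doob} equals $v(y-x)^{-1}E_x\!\left(v(y-\xi_s)\indset{A}\right)$. On the right-hand side the inner integral telescopes, $\int_s^{\zeta}\ell(y,\dd t)=\ell(y,\infty)-\ell(y,s)$, so the right-hand side reads $v(y-x)^{-1}E_x\big(\indset{A}(\ell(y,\infty)-\ell(y,s))\big)$. Thus the whole statement reduces to the single conditional identity
\begin{equation*}
E_x\big(\ell(y,\infty)-\ell(y,s)\mid\calF_s\big)=v(y-\xi_s),\qquad P_x\text{-a.s.}
\end{equation*}
To obtain this I would use the $P_x$-martingale $M_t:=v(y-\xi_t)+\ell(y,t)$. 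Because $0\leq v\leq v(0)$ and $\ell(y,t)\leq\ell(y,\infty)\in L^1(P_x)$, the martingale $M$ is dominated by an integrable variable, hence uniformly integrable and closed by $M_\infty$. Since the killing rate is positive, $v$ vanishes at infinity, so $v(y-\xi_\zeta)=v(+\infty)=0$ and $M_t\to\ell(y,\infty)$ as $t\to\infty$, i.e. $M_\infty=\ell(y,\infty)$. Closure then gives $M_s=E_x(M_\infty\mid\calF_s)=E_x(\ell(y,\infty)\mid\calF_s)$, which rearranges to the displayed identity. Multiplying by the $\calF_s$-measurable indicator $\indset{A}$ and taking $E_x$ matches the two expressions, which settles the elementary case and hence, via the monotone-class step, the lemma.

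The only genuinely delicate point is the identification of the terminal value $M_\infty=\ell(y,\infty)$ together with the uniform integrability ensuring that $M$ is closed; everything else is the routine bookkeeping of the monotone-class reduction and the telescoping of the local-time integral.
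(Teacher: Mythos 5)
Your proof is correct and rests on exactly the same two ingredients as the paper's: the conditional identity $E_x\big(\ell(y,\infty)-\ell(y,s)\mid\calF_s\big)=v(y-\xi_s)$ extracted from the $P_x$-martingale $v(y-\xi_t)+\ell(y,t)$, and the Doob-type formula \eqref{E:Doob} (equivalently Lemma~\ref{L:lastpt}). The only difference is in the routine extension to general bounded predictable processes --- the paper first treats $A_t=\int_0^t B_r\,\dd r$ via Fubini and then invokes ``a standard argument'', whereas you run the monotone-class argument directly over elementary predictable rectangles $(s,\infty)\times A$ --- which is cosmetic.
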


\begin{proof}
Since  the process $v(y-\xi_t)+\ell(y,t)$ for $t\geq 0$ is a $P_x$-martingale, we have for any  bounded $(\calF_t)$-progressively measurable process $(B_t)_{t\geq 0}$ that
$$ E_x\big( B_t (\ell(y,\zeta)-\ell(y,t))\big)=E_x( B_t v(y-\xi_t) ).$$ 
Lemma \ref{L:lastpt} then enables us to rewrite the right-hand side as
$$ v(y-x) E_{x,y} \left( B_t\indset{t<\zeta} \right).$$
Integrating over $t\geq 0$ and applying Fubini's theorem, we arrive at
$$ E_x\left( \int_0^{\zeta} \ell(y, \dd r) \int_0^r \dd t B_t \right)= v(y-x) E_{x,y} \left( \int_0^{\zeta} \dd t B_t \right).$$
This is our claim when the predictable process can be expressed in the form
$$A_t = \int_0^t \dd r B_r, \qquad t\geq 0,$$
for some  bounded progressively measurable process $(B_t)_{t\geq 0}$. The general case follows by a standard argument.
\end{proof}
\subsubsection{Infinite terminal value}

The purpose of this section is to study the limiting behavior of the measures $P_{x,y}$ as $y\to -\infty$. This requires an additional assumption of Cramer's type  on the L\'evy exponent $\psi$, namely that:

\begin{equation}\label{E:CramerLP}
\text{there exists $\varrho <0$   such that $\psi(\varrho)=0$.}
\end{equation}
For the rest of the section we suppose that \eqref{E:CramerLP} is satisfied. 
Note from convexity of $\psi$ that  the right-derivative at $\varrho$ is negative,  $\psi'(\varrho)<0$.
The exponent $\psi$  shifted by $\varrho$ is still a L\'evy exponent, and the first item of  Lemma \ref{L:expofam} admits the following finite-horizon analog :
 
\begin{lemma} \label{L:expofam:gamma:-} 
The function  $\psi^{(\varrho)}$ is the L\'evy exponent of another  L\'evy process which has no killing and drifts towards $-\infty$. Its  law  $P^{(\varrho)}$ verifies for every $t>0$
\begin{equation} \label{E:tilted:2}  P^{(\varrho)}_{|\mathcal{F}_t}= \exp(\varrho \xi_{t}) P_{|\mathcal{F}_t}.
\end{equation}
Furthermore, the potential density $v^{(\varrho)}$ of this $\varrho$-tilted L\'evy process is bounded and given by 
\begin{equation}\label{equation:asymtotvCramer}
    v^{(\varrho)}(x)=\e^{ \varrho x} v(x), \quad  x \in \mathbb{R}.
\end{equation}
\end{lemma}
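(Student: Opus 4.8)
The plan is to follow the template of Lemma~\ref{L:expofam}, but to notice that at the Cramér root $\varrho$ the terminal-value tilt of that lemma degenerates (its normalizing factor $\psi(\varrho)/\psi(0)$ vanishes since $\psi(\varrho)=0$), so I replace it by the finite-horizon exponential martingale $\e^{\varrho\xi_t}$ that already appears in \eqref{E:tilted:2}. First I would record that, with the standing convention that functions on $\R$ are extended by $0$ at $-\infty$ (so $\e^{\varrho\xi_t}=0$ on $\{t\ge\zeta\}$ despite $\varrho<0$), the Cramér identity gives $E(\e^{\varrho\xi_t})=\e^{t\psi(\varrho)}=1$ by \eqref{E:CramerLP}. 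Thus $M_t:=\e^{\varrho\xi_t}$ is a nonnegative mean-one process, and the martingale property $E(M_t\mid\mathcal{F}_s)=M_s$ follows by writing $\xi_t=\xi_s+(\xi_t-\xi_s)$ on $\{s<\zeta\}$ and invoking the independence and stationarity of the increments (with the exponential killing folded in). The consistent family $M_t\cdot P_{|\mathcal{F}_t}$ then defines a probability measure $P^{(\varrho)}$ on $\mathbb{D}$ satisfying \eqref{E:tilted:2}, and computing one-dimensional transforms,
\[
E^{(\varrho)}\!\left(\e^{\gamma\xi_t}\right)=E\!\left(\e^{(\gamma+\varrho)\xi_t}\indset{t<\zeta}\right)=\e^{t\psi(\varrho+\gamma)}=\e^{t\psi^{(\varrho)}(\gamma)},
\]
together with the factorization of the density across the filtration, identifies $P^{(\varrho)}$ as the law of a Lévy process with exponent $\psi^{(\varrho)}$. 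Its killing rate is $-\psi^{(\varrho)}(0)=-\psi(\varrho)=0$, so there is no killing; equivalently $P^{(\varrho)}(\zeta\le t)=E(M_t\indset{\zeta\le t})=0$.

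Next I would settle the drift. Convexity of $\psi$ with $\psi(\varrho)=0>\psi(0)$ and $\varrho<0$ forces the right-derivative $\psi'(\varrho^+)<0$ (were it nonnegative, $\psi$ would be nondecreasing on $[\varrho,\infty)$, contradicting $\psi(0)<0$) and gives $\psi<0$ on $(\varrho,0]$. Hence for $\lambda\in(0,-\varrho)$ one has $E^{(\varrho)}(\e^{\lambda\xi_1})=\e^{\psi(\varrho+\lambda)}<\infty$, so the positive part of $\xi_1$ is integrable and $E^{(\varrho)}(\xi_1)=\psi'(\varrho^+)\in[-\infty,0)$. The strong law of large numbers for Lévy processes then yields $\xi_t/t\to E^{(\varrho)}(\xi_1)<0$, whence $\xi_t\to-\infty$ $P^{(\varrho)}$-almost surely, as claimed.

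For the potential density I would insert \eqref{E:tilted:2} into the Green-function identity \eqref{E:abspot}: for measurable $f\ge0$,
\[
\int_0^\infty E^{(\varrho)}\!\big(f(\xi_t)\big)\,\dd t=\int_0^\infty E\!\big(\e^{\varrho\xi_t}f(\xi_t)\indset{t<\zeta}\big)\,\dd t=\int_{-\infty}^\infty \e^{\varrho y}f(y)\,v(y)\,\dd y,
\]
the last equality being \eqref{E:abspot} under $P$ applied to $g=\e^{\varrho\cdot}f$. Comparing with $\int f\,v^{(\varrho)}$ gives $v^{(\varrho)}(y)=\e^{\varrho y}v(y)$ for almost every $y$, hence for all $y$ by continuity of potential densities, which is \eqref{equation:asymtotvCramer}. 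Boundedness I would \emph{not} read off the formula (where $\e^{\varrho y}$ blows up as $y\to-\infty$) but rather deduce from the general theory recalled after \eqref{E:abspot}: since $P^{(\varrho)}$ is locally equivalent to $P$, it still has $0$ regular for itself \eqref{E:reg} and still admits local times \eqref{E:abs}, and being transient it has a bounded, continuous, positive potential density.

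The steps I expect to be delicate are twofold. First, the rigorous passage from the consistent family of tilted finite-dimensional laws to a genuine probability measure $P^{(\varrho)}$ on the Skorokhod space, and the verification that it is Lévy with exponent $\psi^{(\varrho)}$; this is the Esscher transform and is standard, but must be stated cleanly. Second, the drift statement in the boundary case $\psi'(\varrho^+)=-\infty$, where one cannot naively differentiate $\psi$ and must instead appeal to the Lévy strong law with a possibly infinite negative mean. The boundedness of $v^{(\varrho)}$ is conceptually straightforward but hinges on the observation that regularity and the existence of local times transfer to $P^{(\varrho)}$ through local equivalence of the measures.
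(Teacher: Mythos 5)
Your proof is correct and follows essentially the route the paper intends: the paper omits the argument entirely, stating only that it is ``analogous to that for Lemma \ref{L:expofam}'' with no killing because $\psi^{(\varrho)}(0)=0$ and drift to $-\infty$ because $\psi'(\varrho)<0$, and your finite-horizon Esscher martingale is exactly the right replacement for the terminal-value tilt that degenerates at the Cram\'er root. You also correctly identify and repair the one point that genuinely does not follow from the displayed formula, namely the boundedness of $v^{(\varrho)}$ as $x\to-\infty$, by transferring \eqref{E:abs} and \eqref{E:reg} to $P^{(\varrho)}$ through local equivalence and invoking the general bound $v^{(\varrho)}\le v^{(\varrho)}(0)$ for transient L\'evy processes within the standing framework.
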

The proof  is analogous to that for Lemma \ref{L:expofam} and  we skip the details. The absence of killing stems from  $\psi^{(\varrho)}(0) = 0$, 
and the drift towards $- \infty$ from  $\psi^{(\varrho)\prime}(0)=\psi'(\varrho)<0$. The rest of the section is devoted to establishing the convergence of  $P_{x,y}$ towards $P_x^{(\varrho)}$ as $y \to - \infty$. \begin{proposition}\label{convergence:P:x:y}
For every $x\in \mathbb{R}$, the following  convergence holds weakly   with respect to the Skorokhod topology: 
\begin{equation} \label{equation:convergencePxy}
    \lim \limits_{y\to  -\infty} P_{x,y}= P_x^{(\varrho)}. 
\end{equation}
As a consequence, we have also,  
$$    \lim \limits_{y\to  -\infty} P_{x}(  \, \cdot \,  | \xi_{\zeta-} \leq y)= P_x^{(\varrho)}. $$
\end{proposition}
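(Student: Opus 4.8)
The plan is to prove the weak convergence $\lim_{y\to-\infty} P_{x,y} = P_x^{(\varrho)}$ in two stages: first establishing convergence of finite-dimensional distributions, and then upgrading to weak convergence on the Skorokhod space $\mathbb{D}$ via a tightness argument. Both the target law $P_x^{(\varrho)}$ and the approximating laws $P_{x,y}$ admit convenient Doob-transform descriptions from Lemma~\ref{L:expofam:gamma:-} and from \eqref{E:Doob}, so the natural approach is to compare these two descriptions directly through the asymptotic behavior of the potential density $v$.

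\textbf{Finite-dimensional convergence via potential density asymptotics.}

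First I would fix $t\geq 0$ and an event $A\in\calF_t$, and compare the two one-sided expressions. From \eqref{E:Doob} we have
\begin{equation*}
P_{x,y}\left(A\cap\{t<\zeta\}\right) = \frac{1}{v(y-x)}\, E_x\!\left(v(y-\xi_t)\indset A\right),
\end{equation*}
while from \eqref{E:tilted:2} the target law satisfies $P_x^{(\varrho)}(A) = E_x\!\left(\e^{\varrho(\xi_t - x)}\indset A\right)$ (after accounting for the shift by the starting point $x$). The key input is the Cramér condition \eqref{E:CramerLP}: by \eqref{equation:asymtotvCramer}, the tilted potential density $v^{(\varrho)}(z)=\e^{\varrho z}v(z)$ is bounded, and since $\psi^{(\varrho)}$ corresponds to a genuine (non-killed) L\'evy process drifting to $-\infty$, its potential density admits a finite positive limit $v^{(\varrho)}(-\infty)=\lim_{z\to-\infty}\e^{\varrho z}v(z) =: c > 0$. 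Indeed, the renewal-theoretic asymptotics for potential densities of L\'evy processes drifting to $-\infty$ give exactly such a limit, expressible as $1/(|\psi'(\varrho)|\cdot m)$ up to normalization. Writing
\begin{equation*}
\frac{v(y-\xi_t)}{v(y-x)} = \frac{\e^{\varrho(y-\xi_t)}v(y-\xi_t)}{\e^{\varrho(y-x)}v(y-x)}\cdot \e^{\varrho(\xi_t-x)},
\end{equation*}
the first factor converges pointwise to $c/c=1$ as $y\to-\infty$ (since both $y-\xi_t$ and $y-x$ tend to $-\infty$), leaving precisely the density $\e^{\varrho(\xi_t-x)}$ that defines $P_x^{(\varrho)}$. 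A dominated convergence argument, using boundedness of $v^{(\varrho)}$ to control the ratio uniformly, then yields convergence of $P_{x,y}(A\cap\{t<\zeta\})$ to $P_x^{(\varrho)}(A)$. Extending from single times to finite-dimensional cylinder events is routine via the Markov property.

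\textbf{Tightness and the Skorokhod upgrade.}

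To pass from finite-dimensional convergence to weak convergence on $\mathbb{D}$, I would verify tightness of the family $(P_{x,y})_{y\leq 0}$ using a criterion such as \cite[Theorem VI.3.21]{JS}, exactly as was done for the sequence $(P_{x,y_n})$ in the proof of Proposition~\ref{P:disint}. The representation from Lemma~\ref{L:lastpt} of $P_{x,y}$ as the law of $\xi$ killed at its last-passage time at $y$ allows one to dominate the relevant modulus-of-continuity probabilities under $P_{x,y}$ by those under the base law $P_x$ (up to the bounded factor $v(y-\xi_\cdot)/v(y-x)$, which the Cramér condition keeps uniformly controlled). The main obstacle I anticipate is precisely this uniform control: one must ensure the domination constants do not blow up as $y\to-\infty$, and this is exactly where boundedness of $v^{(\varrho)}$ and the existence of the limit $c>0$ are essential, since they prevent the Radon--Nikodym ratio from degenerating. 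Once tightness is secured, finite-dimensional convergence identifies the unique limit point as $P_x^{(\varrho)}$, giving \eqref{equation:convergencePxy}.

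\textbf{The conditional reformulation.}

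Finally, for the stated consequence $\lim_{y\to-\infty}P_x(\,\cdot\mid \xi_{\zeta-}\leq y)=P_x^{(\varrho)}$, I would observe that conditioning on $\{\xi_{\zeta-}\leq y\}$ is, by the disintegration \eqref{equation:desintegration}, a mixture of the measures $P_{x,y'}$ over $y'\leq y$ weighted by $-\psi(0)v(y'-x)\dd y'$. Since each $P_{x,y'}$ with $y'\leq y$ is itself close to $P_x^{(\varrho)}$ when $y$ is very negative, the mixture converges to the same limit. This follows by a standard argument: test against a bounded continuous functional on $\mathbb{D}$, write the conditional expectation as a ratio of integrals against the potential-density weight, and apply the weak convergence \eqref{equation:convergencePxy} together with dominated convergence, the normalizing denominator $\int_{-\infty}^{y}v(y'-x)\dd y'$ canceling cleanly against the numerator in the limit.
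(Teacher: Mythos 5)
Your core argument is the same as the paper's: both proofs start from the representation \eqref{E:Doob}, rewrite the ratio $v(y-\xi_t)/v(y-x)$ in terms of $v^{(\varrho)}(z)=\e^{\varrho z}v(z)$, invoke the renewal-type asymptotic $\lim_{z\to-\infty}v^{(\varrho)}(z)=-1/\psi'(\varrho)$ (Lemma \ref{lemma:asymv}), and conclude by dominated convergence with dominating function a constant times $\e^{\varrho \xi_t}$, which is $P_x$-integrable; the corollary is obtained in both cases from the disintegration \eqref{equation:desintegration}. Where you diverge is the upgrade to Skorokhod convergence. The paper needs no tightness step: the finite-horizon convergence it establishes holds for \emph{every} nonnegative measurable functional $F$ on $\mathbb{D}([0,t])$, i.e.\ it is convergence in total variation of the restricted laws, and taking $F\equiv 1$ gives $P_{x,y}(\zeta>t)\to 1$ (the limit law has no killing), after which weak convergence on $\mathbb{D}$ follows since $t$ is arbitrary. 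Your tightness step is therefore superfluous, and as written it contains an inaccuracy: unlike in the proof of Proposition \ref{P:disint} where $y$ ranges over a convergent sequence, here the factor $v(y-\xi_t)/v(y-x)$ is \emph{not} uniformly bounded -- it is of order $\e^{\varrho(\xi_t-x)}$ with $\varrho<0$, hence blows up as $\xi_t\to-\infty$ -- so you cannot dominate the modulus-of-continuity probabilities under $P_{x,y}$ by those under the base law $P_x$ up to a constant. The correct uniform domination, which your own display essentially provides, is by the single tilted probability law $P_x^{(\varrho)}$; with that replacement your tightness argument goes through, but the paper's total-variation route is more economical. One further point you should make explicit in the finite-dimensional step is the removal of the indicator $\indset{t<\zeta}$, which again requires $P_{x,y}(\zeta>t)\to1$.
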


 The proof of Proposition \ref{convergence:P:x:y} relies on the representation \eqref{E:Doob} of the measures $P_{x,y}$ paired with the following asymptotic estimate of the potential density $v$.

\begin{lemma} \label{lemma:asymv} We have:
$$\lim_{y\to -\infty} \exp(\varrho y) v(y)=-1/\psi^{\prime}(\varrho). $$
\end{lemma}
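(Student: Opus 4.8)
The plan is to strip off the exponential factor by tilting and then recognize the resulting quantity as a renewal (potential) density, for which the stated limit is exactly a form of the renewal theorem.

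First I would invoke Lemma~\ref{L:expofam:gamma:-} to rewrite the object of interest. By \eqref{equation:asymtotvCramer} we have $\e^{\varrho y} v(y)=v^{(\varrho)}(y)$, where $v^{(\varrho)}$ is the potential density of the $\varrho$-tilted process under $P^{(\varrho)}$. By that same lemma this tilted process has no killing, drifts to $-\infty$, and its exponent $\psi^{(\varrho)}$ satisfies $\psi^{(\varrho)}(0)=0$ and $\psi^{(\varrho)\prime}(0)=\psi'(\varrho)$; in particular it has finite, strictly negative mean $m:=\psi'(\varrho)\in(-\infty,0)$, and $v^{(\varrho)}$ is bounded and continuous. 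The claim thus reduces to showing that $v^{(\varrho)}(y)\to -1/m$ as $y\to-\infty$.

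Next I would interpret $v^{(\varrho)}$ as a renewal density. The potential measure $U(\dd z)=\int_0^\infty P^{(\varrho)}(\xi_t\in \dd z)\,\dd t$ has density $v^{(\varrho)}$, and by the occupation density formula its mass on a window $(y,y+h)$ equals the expected time the tilted process, started from $0$, spends in that window. Since the process drifts to $-\infty$ at asymptotic speed $|m|$ by the law of large numbers, a window far to the left is traversed at speed $|m|$, so the expected sojourn time there is asymptotically $h/|m|$; letting $y\to-\infty$ this gives $v^{(\varrho)}(y)\to 1/|m|=-1/m$. To make this rigorous I would appeal to the renewal theorem for Lévy processes (equivalently, work with the descending ladder height subordinator and apply Blackwell's renewal theorem, see e.g.\ \cite{LP}); the non-lattice hypothesis is guaranteed because $0$ is regular for itself, cf.\ \eqref{E:reg}, and the required finiteness of the mean is $|m|<\infty$. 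The companion limit $v^{(\varrho)}(y)\to 0$ as $y\to+\infty$ follows directly from the tilted form of \eqref{E:hit}, namely $P^{(\varrho)}_0(H_y<\infty)=v^{(\varrho)}(y)/v^{(\varrho)}(0)\to 0$ by upward transience.

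An analytic alternative is available from \eqref{E:Fourier}: tilting yields $\int \e^{i\theta x} v^{(\varrho)}(x)\,\dd x = -1/\psi^{(\varrho)}(i\theta)$, which has a single simple pole at $\theta=0$ with $-1/\psi^{(\varrho)}(i\theta)\sim -1/(i\theta\,m)$. Subtracting this pole leaves a continuous, integrable remainder whose inverse transform vanishes at $\pm\infty$ by Riemann--Lebesgue, while the principal-value inverse transform of $-1/(i\theta\,m)$ accounts for the jump between the two ends; combined with $v^{(\varrho)}(+\infty)=0$ this again gives $v^{(\varrho)}(-\infty)=-1/m$. I expect the main obstacle to lie in the rigorous justification rather than the heuristic content: in the renewal route, one must verify the non-lattice property and the finiteness of the ladder mean so that Blackwell's theorem applies with the correct constant; in the analytic route, one must carefully isolate the simple pole, interpret the inverse transform of $1/(i\theta)$ as a principal value with the right sign, and control the tails of $-1/\psi^{(\varrho)}(i\theta)$ via \eqref{E:abs} so that Riemann--Lebesgue is applicable. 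I would favour the renewal-theoretic argument as the cleanest and most robust.
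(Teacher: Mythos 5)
Your reduction to the tilted potential density $v^{(\varrho)}(y)=\e^{\varrho y}v(y)$ and your appeal to the renewal theorem match the paper's starting point exactly, but there is a genuine gap at the decisive step. Blackwell's renewal theorem (in the form \cite[Theorem I.21]{LP}) controls the potential \emph{measure} of a window: it gives $\int_{y}^{y+h} v^{(\varrho)}(z)\,\dd z \to -h/\psi'(\varrho)$ as $y\to-\infty$. It does not, by itself, give pointwise convergence of the \emph{density} $v^{(\varrho)}(y)$; a bounded continuous density can oscillate while its averages over windows of fixed length converge (ruling this out would require something like direct Riemann integrability or a spread-out/Stone-type refinement, neither of which you verify). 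Your sojourn-time heuristic ("the expected time spent in a far-left window is $h/|m|$, hence $v^{(\varrho)}(y)\to 1/|m|$") is precisely this unjustified passage from measure to density, and the obstacles you flag (non-lattice property, finiteness of the ladder mean) are not the ones that actually bite. The paper explicitly presents the renewal limit only as motivation ("why this result should not come as a surprise") and then supplies the missing ingredient: from \eqref{E:hit} and the strong Markov property at $H_x$ one gets the supermultiplicative bound $v^{(\varrho)}(x)\,v^{(\varrho)}(y-x)\le v^{(\varrho)}(0)\,v^{(\varrho)}(y)$, which, combined with the continuity of $v^{(\varrho)}$ at $0$ (so that $v^{(\varrho)}(y-x)\to v^{(\varrho)}(0)$ as $y-x\to 0$), pins nearby values of the density to each other and upgrades the window-averaged limit to the pointwise one. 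Your proof needs this (or an equivalent regularity argument) to close.

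Your alternative Fourier route has a similar issue: isolating the simple pole of $-1/\psi^{(\varrho)}(i\theta)$ at $\theta=0$ and applying Riemann--Lebesgue to the remainder requires the remainder to be integrable, which demands second-order control of $\psi^{(\varrho)}$ near $0$ beyond what \eqref{E:abs} and Assumption \eqref{E:CramerLP} provide; as written it is a heuristic rather than a proof.
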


\begin{proof}
Recalling \eqref{equation:asymtotvCramer}, the claim can be rephrased as 
    \begin{equation} \label{equation:convergenceDensity}
        \lim_{y \to - \infty} v^{(\varrho)}(y) = -1/\psi'(\varrho). 
    \end{equation}
    Let us first explain why this result should not come as a surprise. The Lévy process with exponent $\psi^{(\varrho)}$ is non-lattice, has no killing and has finite mean $\psi'(\varrho)<0$.  The renewal theorem  \cite[Theorem I.21]{LP} yields that for every $h > 0$, 
    \begin{equation}\label{renewal:lim}
        \lim_{x \to - \infty} \int_{x}^{x+h} v^{(\varrho)}(y) \mathrm{d} y = - h /\psi'(\varrho),  
    \end{equation}
 in agreement with \eqref{equation:convergenceDensity}. 
 
 On the other hand, we observe from \eqref{E:hit} and the strong Markov property at $H_x$ that 
 $$\frac{v^{(\varrho)}(x)v^{(\varrho)}(y-x)}{v^{(\varrho)}(0)^2} = P^{(\varrho)}_0(H_x<\infty) P^{(\varrho)}_x(H_y<\infty)\leq 
P^{(\varrho)}_0(H_y<\zeta)=v^{(\varrho)}(y)/v^{(\varrho)}(0).$$
We now readily see that \eqref{equation:convergenceDensity} follows from \eqref{renewal:lim} and the fact that $v^{(\varrho)}(y-x)$ converges to $v^{(\varrho)}(0)$ as $y-x\to 0$. 
\end{proof}

We  have all the ingredients to establish the convergence of $P_{x,y}$ as $y \to -\infty$.

 \begin{proof}[Proof of Proposition \ref{convergence:P:x:y}]
Fix $x\in \mathbb{R}$ and  $t > 0$. Recall from \eqref{E:Doob}  that for every measurable  functional $F\geq 0$ on the space $\mathbb{D}([0,t])$ of possibly absorbed rcll paths indexed by $[0,t]$,   we have 
     \begin{equation}\label{equation:step1}
         E_{x,y}\big(F(\xi_{s}, 0 \leq s \leq t )1_{\zeta > t}\big) 
         = \frac{1}{v(y-x)} E_x\big( F(\xi_{s }, 0 \leq s \leq t ) v(y-\xi_t) \big).  
     \end{equation}
     for   $y \in \mathbb{R}$. 
It readily follows from Lemma \ref{L:expofam:gamma:-} that 
\begin{equation*}
\sup_{(y,z)\in \R_{-}\times \R}   \exp(  -\varrho z  ) \frac{v(y-z)}{v(y)} <\infty.
\end{equation*}
 Since $E_x(\exp(\varrho \xi_t))=\e^{\varrho x}<\infty$,  dominated convergence applies and Lemma \ref{lemma:asymv} yields
\begin{align*}
\lim \limits_{y\to -\infty}E_{x,y}\big(F(\xi_{s}, 0 \leq s \leq t )1_{\zeta > t}\big) &=  E_x\big( F(\xi_{s }, 0 \leq s \leq t )  \exp(\varrho (\xi_t-x) ) \big)\\
&=  E_x^{(\varrho)}\big( F(\xi_{s }, 0 \leq s \leq t ) \big),
\end{align*}
where the second equality stems from Lemma \ref{L:expofam:gamma:-}. In particular, since under $P_x^{(\varrho)}$ the process $\xi$ does not have killing,  from taking $F=1$ we infer that $P_{x,y}(\zeta>t)\to1$ as $y \to - \infty$.  It then follows that
$$
\lim \limits_{y\to -\infty}E_{x,y}\big(F(\xi_{s}, 0 \leq s \leq t ) \big) = E_x^{(\varrho)}\big( F(\xi_{s }, 0 \leq s \leq t ) \big),
$$
for every every measurable  functional $F\geq 0$ on $\mathbb{D}([0,t])$,  and since $t > 0$ is arbitrary the desired convergence \eqref{equation:convergencePxy} follows. The second statement of the proposition is now a straightforward consequence of the disintegration \eqref{equation:desintegration} and \eqref{equation:convergencePxy}.  
\end{proof}
\section{Local time measures on decoration level sets} 
\label{sec:LTdeco}

Throughout the rest of this work, we consider a 
characteristic  quadruplet  $(\sigma^2, \mathrm a, \boldsymbol{\Lambda}; \alpha)$ that satisfies Assumption \ref{A:subcrit}. 
Recall from Section \ref{sec:nutshell} that  $\P$ denotes the law of the family of decoration-reproduction processes induced by this quadruplet when the decoration at the root $\uprho$  is $g(\uprho)=1$, and  $\Q$ that of the self-similar Markov tree $(T,g)$. We  further request that the L\'evy process $\xi$ with exponent $\psi$ and characteristics $(\sigma^2, \mathrm a, \Lambda_0)$ satisfies  \eqref{E:abs} and \eqref{E:reg}. 
\begin{figure}[!h]
 \begin{center}
  \includegraphics[width=5cm,height=4cm]{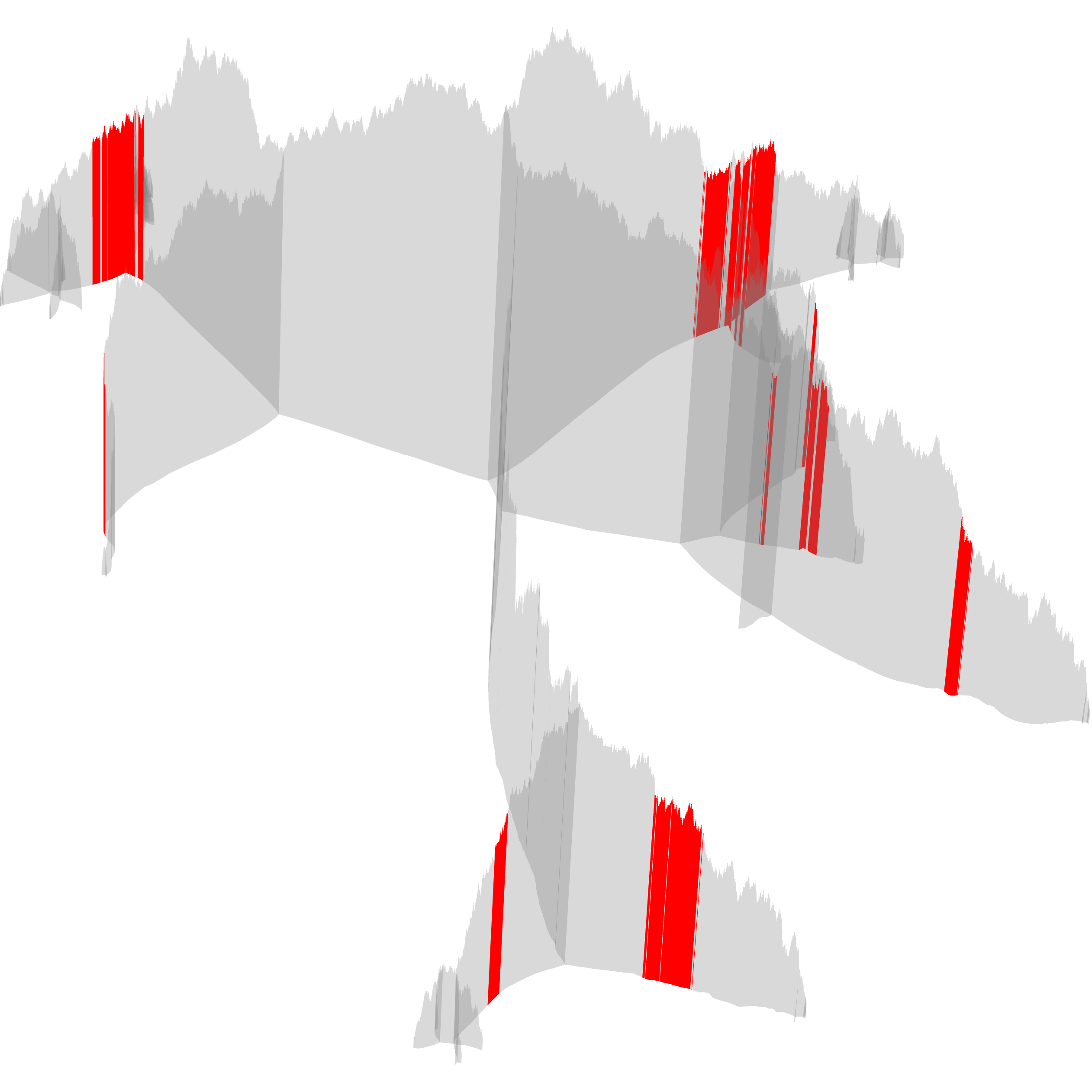}
 \caption{3D simulation of a self-similar Markov tree $(T,g)$ started from $1$. In red are represented the points where the decoration takes a fixed value $x$ (here $x=1/2$). } \label{fig:ssMt2}
 \end{center}
 \end{figure}
\subsection{Construction of the local time measure} \label{sec:construc}
We know from Section \ref{sec:ltmls} that $\xi$ possesses local time measures $\ell(y, \dd s)$ at all levels $y\in \R$; this is readily transferred to self-similar Markov processes
by the Lamperti transformation \eqref{E:Lamperti}. Specifically, we define the local time measure $L(x, \dd t)$ at level $x=\e^{y}$ on $[0,z)$ as the image of
$\ell(y, \dd s)$ on $[0, \zeta)$ by the time-change
$$s\mapsto t = \int_0^s \exp(\alpha \xi_r) \dd r, \qquad s\in [0, \zeta).$$ 
In other words, the repartition function  $L(x,t) =L(x, [0,t])$ for $0\leq t \leq z$ is simply given by
$$L(x,t)=\ell(y,s).$$
The image of the Lebesgue measure $\dd s$ on $[0, \zeta)$ by the time-change is $X_t^{-\alpha} \dd t$ on $[0,z)$, 
 so the occupation density formula for the self-similar Markov process $X$ reads
 \begin{align*}\int_0^t f(X_r) \dd r &=\int_0^s f(\e^{\xi_u}) \e^{\alpha \xi_u}\dd u\\
 &=\int_{-\infty}^{\infty} f(\e^y) \e^{\alpha y} \ell(y, s) \dd y\\
 &= \int_{0}^{\infty} f(x) L(x,t) x^{\alpha-1} \dd x
 \end{align*}
 for any measurable function $f: (0,\infty)\to \R_+$ and $t<z$. 

 We also make a couple of simple observations in this setting that will be useful in the sequel. First, 
 since for any $y, y_0\in \R$, $\ell(y-y_0, \cdot)$ is the local time process at level $y$ of the shifted L\'evy process $\xi+y_0$,
 the local times of a self-similar Markov process also inherit  the scaling property. Specifically, if $X'_t= cX_{c^{-\alpha}t}$ is the rescaled version of $X$ started from $c=\e^{y_0}$, then the local time at level $x$ for $X'$ is 
 $$L'(x,t)= L(c^{-1}x, c^{-\alpha}t).$$
  Second,  we have $L(x,z)=\ell(\log x, \infty)$, and the expectation of this variable when the self-similar Markov process starts from $x'>0$, that is 
 when the L\'evy process starts from $\log x'$, is 
 \begin{equation} \label{E:meanltssM}
 E(L(x,z) \mid X_0=x') = v(\log(x/x')),
 \end{equation} where $v\in\calC^0$ denotes the potential density for the L\'evy exponent $\psi$.

 It should now be plain how to construct local times measures on ssMt. Recall that the decoration $g$ on $T$ stems from trajectories of self-similar Markov processes 
 with characteristics $(\psi; \alpha)$ on a family of segments $(S_u)_{u\in \U}$.  We  can define a local time measure $L(x, \dd t)$
 on each segment $S_u$, and the summation over all segments yields the local time measure $L(x, \dd t)$ on $T$. We stress from \eqref{E:deconull} that only finitely many segments carry a non-zero local time measure at level $x>0$, and hence $L(x, \dd t)$ is a finite measure.  
 
 The reader should not be confused by the fact that we use the same notation $L(x, \dd t)$ for the local time measure at level $x$, both for the self-similar Markov process $X$ and  for the self-similar Markov tree $(T,g)$, because these are in essence the same functional.
 Similarly, using $t$ for a typical time in $[0,z)$ or a typical point in the tree $T$ is intentional\footnote{"La math\'ematique est l'art de donner le m\^eme nom \`a des choses diff\'erentes." (Henri Poincar\'e, in \textit{Science et M\'ethode}, 1908)}.
The cautious reader may however worry that our construction of local times 
 is not intrinsic but rather seems to depend on the family of decoration-reproduction processes instead  of $(T,g)$ only. This possible concern should be appeased by 
  the following observation; recall that $\uplambda$ denotes the length measure on $T$.
  
  \begin{proposition} \label{P:conslt} Let $(\epsilon_n)_{n\geq 1}$ be a sequence of positive real numbers as in Lemma \ref{L:conslt}. For every $x>0$, the 
  limit below holds $\P$-a.s. in the sense of weak convergence on the space of finite measures on $T$:
 $$
\lim_{n\to \infty} \frac{1}{2\epsilon_n}  \indset{|\log(g(t)/x)|<\epsilon_n} \uplambda(\dd t) 
= x^{\alpha} L(x, \dd t).
$$
  Furthermore, the occupation density formula \eqref{E:occupdens} holds, $\P$-a.s. 
  \end{proposition}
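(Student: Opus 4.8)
The plan is to work under $\P$, segment by segment, exploiting that at any fixed level only finitely many segments are relevant. Fix $x>0$ and set $y=\log x$. On each segment $S_u$ the decoration $f_u$ is, by construction, the Lamperti transform \eqref{E:Lamperti} of a L\'evy process $\xi^u$ with exponent $\psi$; writing $t=\int_0^s\exp(\alpha\xi^u_r)\dd r$ for the time change, one has $f_u(t)=\exp(\xi^u_s)$, so that $\log(f_u(t)/x)=\xi^u_s-y$ and $\uplambda(\dd t)=\exp(\alpha\xi^u_s)\dd s$ along $S_u$. Hence, pulling the restriction to $S_u$ of the approximating measure back to the L\'evy time variable turns it into $\tfrac{1}{2\epsilon_n}\indset{|\xi^u_s-y|<\epsilon_n}\exp(\alpha\xi^u_s)\dd s$, while the target $x^{\alpha}L(x,\dd t)|_{S_u}$ is the image of $x^{\alpha}\ell^u(y,\dd s)$ under the same time change, where $\ell^u$ is the local time of $\xi^u$.

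First I would treat a single segment. For $\varphi\in\calC(T)$, denote by $\varphi_u(s)$ its value at the point of $S_u$ at L\'evy time $s$; this is continuous and bounded in $s$. On the support of $\indset{|\xi^u_s-y|<\epsilon_n}$ we may replace $\exp(\alpha\xi^u_s)$ by $x^{\alpha}$ at the cost of an error bounded by $\sup_{|w-y|\le\epsilon_n}|\exp(\alpha w)-x^{\alpha}|$, which tends to $0$, times the total mass $\tfrac{1}{2\epsilon_n}\int\indset{|\xi^u_s-y|<\epsilon_n}\dd s$, which converges to the finite value $\ell^u(y,\infty)$ (test the weak limit in Lemma~\ref{L:conslt} against $\varphi\equiv1$). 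The remaining main term $x^\alpha\tfrac{1}{2\epsilon_n}\int\varphi_u(s)\indset{|\xi^u_s-y|<\epsilon_n}\dd s$ converges to $x^{\alpha}\int\varphi_u\,\ell^u(y,\dd s)$ by the weak-convergence conclusion of Lemma~\ref{L:conslt} applied to $\xi^u$ at level $y$; undoing the time change, this is exactly $\int_{S_u}\varphi\,x^{\alpha}L(x,\dd t)$. Since Lemma~\ref{L:conslt} furnishes a \emph{single} deterministic sequence $(\epsilon_n)$ valid $\P$-a.s. for every starting point, applying it to each $\xi^u$ and intersecting the countably many almost-sure events makes the segment-wise convergence hold simultaneously for all $u\in\U$, $\P$-a.s.

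Next I would assemble the segments, the key input being \eqref{E:deconull}: the set $U_x:=\{u:\sup f_u>x/2\}$ is $\P$-a.s. finite. For $n$ large enough that $\epsilon_n<\log 2$, any segment outside $U_x$ has $f_u\le x/2<x\e^{-\epsilon_n}$ throughout, so $\log(f_u(t)/x)<-\epsilon_n$ and the indicator vanishes there; moreover $L(x,\cdot)$ is itself carried by $U_x$, since a non-trivial local time at level $y$ forces $\xi^u$ to reach $y$, i.e. $\sup f_u\ge x$. Thus both the approximating measures (for $n$ large) and the limit live on the finite union $\bigcup_{u\in U_x}S_u$, and summing the finitely many segment-wise convergences gives $\int_T\varphi\,\nu_n\to\int_T\varphi\,x^{\alpha}L(x,\dd t)$ for every $\varphi\in\calC(T)$, which is the asserted weak convergence on the compact tree $T$. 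For the occupation density formula \eqref{E:occupdens}, I would time-change the occupation density identity for $\xi^u$ in its form for jointly measurable integrands (a standard consequence of the disintegration property of L\'evy local times) to obtain, on each segment, $\int_{S_u}\varphi(g(t),t)\uplambda(\dd t)=\int_0^{\infty}x^{\alpha-1}\dd x\int_{S_u}\varphi(x,t)L(x,\dd t)$ for measurable $\varphi\ge0$ with $\varphi(0,\cdot)\equiv0$, and then sum over $u\in\U$ by Tonelli (the right-hand side involving, at each level $x$, only the finitely many segments of $U_x$).

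I expect the main obstacle to be the bookkeeping that makes the reduction to finitely many segments rigorous and uniform in $n$: one must check that the deterministic sequence $(\epsilon_n)$ — which is precisely why Lemma~\ref{L:conslt} was stated with a sequence independent of the starting point and level — simultaneously governs the \emph{random} family $(\xi^u)_{u\in\U}$, and that for $\P$-a.e. realization there is a threshold beyond which only the segments of the finite set $U_x$ carry any mass. Once this is secured, the replacement of $\exp(\alpha\xi^u_s)$ by $x^{\alpha}$ and the interchange of the finite sum with the limit are routine.
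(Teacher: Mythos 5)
Your proposal is correct and follows essentially the same route as the paper, whose proof is a one\nobreakdash-line appeal to Lemma~\ref{L:conslt}, the Lamperti transformation, and the gluing construction; you have simply made explicit the segment-wise pullback, the replacement of $g(t)^{-\alpha}$ by $x^{-\alpha}$ on the support of the indicator, and the reduction to the finitely many segments reaching level $x$ via \eqref{E:deconull}. The concluding occupation-density step by time-changing \eqref{E:occden} on each segment and summing is likewise exactly what the paper intends by ``the claims in the statement derive easily.''
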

  \begin{proof} Indeed, an
  immediate consequence of Lemma \ref{L:conslt}, the Lamperti transformation, and the gluing construction, is 
$$
\lim_{n\to \infty} \frac{1}{2\epsilon_n}  g(t)^{-\alpha} \indset{|\log(g(t)/x)|<\epsilon_n} \uplambda(\dd t) 
= L(x, \dd t), \qquad \P\text{-a.s.}
$$
The claims in the statement derive easily. 
  \end{proof}

Clearly, the scaling property of ssMt propagates to their local time measures. In particular, 
for any $c>0$, the joint distribution under $\Q$ of the rescaled ssMt together with its total local times
 $$\left( c^{\alpha} T, cg, (L(x/c, T))_{x>0}\right)$$
 is the same as that of  $\left( T, g, (L(x, T))_{x>0}\right)$ under $\Q_c$. 
 Note also that, by construction,  the local time measure has no atoms and that its support coincides with the level set, 
$$\mathrm{Supp }\, L(x, \dd t) = \{t\in T: g(t)=x\}, \qquad \Q\text{-a.s.}$$

\subsection{Total local times and their mean values}
We turn our attention to  the total local times $L(x,T)$ at levels $x$, which form a family of $\P$-a.s. finite random variables.
A special case of the occupation density formula \eqref{E:occupdens} for functions $\varphi$ depending on $x$ only reads
\begin{equation}\label{E:occdens}
\int_{T} f(g(t)) \uplambda(\dd t) = \int_0^{\infty} f(x) x^{\alpha-1} L(x,T) \dd x
\end{equation}
for any measurable function $f: \R_+\to \R_+$ with $f(0)=0$.

We next point out that the variables $L(x,T)$ are integrable and their expectations can be computed as follows. 
Recall the notation \eqref{E:kappa} for the cumulant function $\kappa$,   Assumption \ref{A:subcrit}  and the definition of $\gamma_0$ therein. 

\begin{proposition}\label{P:exlt} The following assertions hold.
\begin{enumerate}
\item[(i)]
The shifted cumulant, 
$$\kappa^{(\gamma_0)}(\cdot)=\kappa(\gamma_0+\cdot)$$ is a L\'evy exponent with killing for which \eqref{E:occden} and \eqref{E:reg} are satisfied.
We write $\mathrm{w}^{(\gamma_0)}\in \calC^0$ for its potential density.
\item[(ii)] For every $x>0$, we have
$$\E\big( L(x,T)\big) = x^{-\gamma_0}\mathrm{w}^{(\gamma_0)}(\log x).$$
\end{enumerate}
\end{proposition}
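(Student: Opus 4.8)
The plan is to reduce the computation of $\E(L(x,T))$ over the whole tree to the one-dimensional potential-theoretic computation \eqref{E:meanltssM} for a single self-similar Markov process, by summing the contributions of all segments $(S_u)_{u\in\U}$ and keeping track of the initial decoration at which each segment starts. For part~(i), I would first verify that $\kappa^{(\gamma_0)}$ is genuinely a L\'evy exponent with killing: convexity of $\kappa$ together with Assumption~\ref{A:subcrit} ($\kappa(\gamma_0)<0$) forces $\kappa^{(\gamma_0)}(0)=\kappa(\gamma_0)<0$, so the shifted exponent has a strictly positive killing rate $-\kappa^{(\gamma_0)}(0)>0$. The shift by $\gamma_0$ is an Esscher-type transform, so the conditions \eqref{E:abs} and \eqref{E:reg}, which we already assume for $\psi$, are inherited by $\kappa^{(\gamma_0)}$ (absolute continuity/regularity are preserved under exponential tilting, just as in Lemma~\ref{L:expofam}); this guarantees the existence of a continuous potential density $\mathrm{w}^{(\gamma_0)}\in\calC^0$.

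For part~(ii), I would set $x=1$ first (the general case follows by the scaling property of the local time measures noted after Proposition~\ref{P:conslt}) and write
\begin{equation*}
\E\big(L(1,T)\big)=\sum_{u\in\U}\E\big(L(1,S_u)\big).
\end{equation*}
The key is a many-to-one / spinal formula for ssMt: conditionally on the initial decoration $g(\text{start of }S_u)=\e^{\xi^{(u)}}$ of the segment $S_u$, the expected local time at level $1$ carried by that single segment equals, by \eqref{E:meanltssM}, the potential density $v\big(\log 1-\log \e^{\xi^{(u)}}\big)=v(-\xi^{(u)})$ of the underlying L\'evy exponent $\psi$, where $\xi^{(u)}$ denotes the log-initial-size of $u$. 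Here $v$ is the potential density associated with $\psi$ (characteristics $(\sigma^2,\mathrm a,\Lambda_0)$). Summing over $u$ and taking expectations, the problem becomes computing
\begin{equation*}
\E\Big(\sum_{u\in\U} v\big(-\xi^{(u)}\big)\Big),
\end{equation*}
where the sum runs over the initial log-sizes of all individuals. This is exactly an intensity computation for the branching L\'evy process built from $(\sigma^2,\mathrm a,\boldsymbol{\Lambda})$.

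The main obstacle, and the heart of the proof, is the many-to-one formula for the branching L\'evy process: the expected sum $\E\big(\sum_u \phi(\xi^{(u)})\big)$ of a functional over all individuals equals $\E\big(\e^{-\gamma_0\,\Xi_\infty?}\cdots\big)$ — more precisely it is expressed through a single tagged spine whose log-size evolves as a L\'evy process with exponent $\kappa$, tilted by $\e^{\gamma_0\,\cdot}$ so as to become the killed exponent $\kappa^{(\gamma_0)}$. Concretely, the branching structure contributes the factor $\sum_{i}\e^{\gamma y_i}$ appearing in \eqref{E:kappa}, and choosing the spine weight $\e^{\gamma_0 \xi}$ turns the intensity measure of the whole population into the law of the single $\kappa^{(\gamma_0)}$-L\'evy process. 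Under this change of measure one obtains
\begin{equation*}
\E\Big(\sum_{u\in\U} v\big(-\xi^{(u)}\big)\Big)
=\E^{(\gamma_0)}\Big(\int_0^{\infty} \e^{\gamma_0\,\zeta_s}\,v(-\zeta_s)\,\dd s\Big),
\end{equation*}
where $\zeta$ is the spine with exponent $\kappa^{(\gamma_0)}$; using $v^{(\gamma_0)}(y)=\e^{\gamma_0 y}v(y)$ (the tilted potential density, as in Lemma~\ref{L:expofam}(i)) and the defining relation \eqref{E:abspot} of the potential density $\mathrm{w}^{(\gamma_0)}$ of $\kappa^{(\gamma_0)}$, this integral collapses to $\mathrm{w}^{(\gamma_0)}(0)$ for $x=1$, hence to $x^{-\gamma_0}\mathrm{w}^{(\gamma_0)}(\log x)$ after reinstating the scaling. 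The delicate points to get right are (a) justifying the many-to-one identity in the precise form that replaces the whole population by a $\kappa^{(\gamma_0)}$-spine, invoking the construction of \cite{ssMt, BM}, and (b) checking that the resulting integral is finite and matches the convolution identity \eqref{E:abspot} for the tilted potential density — finiteness being guaranteed precisely because the killing rate $-\kappa^{(\gamma_0)}(0)>0$ makes $\mathrm{w}^{(\gamma_0)}$ bounded and integrable.
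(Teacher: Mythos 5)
Your opening reduction is exactly the paper's: $\E(L(x,T))=\E\bigl(\sum_{u\in\U}v(\log(x/\chi(u)))\bigr)$, and your treatment of part (i) is broadly in the right direction, though it glosses over one point: $\kappa^{(\gamma_0)}$ is \emph{not} merely an Esscher transform of $\psi$; it equals $\psi^{(\gamma_0)}$ (up to an adjustment of the killing rate) \emph{plus} the branching term $\int\boldsymbol{\Lambda}_1(\dd\mathbf y)\sum_i\e^{(\gamma_0+\cdot)y_i}$. Conditions \eqref{E:abs} and \eqref{E:reg} are hypotheses on $\psi$, not on $\kappa$, so ``inheritance under tilting'' alone does not close the argument; one must also observe, as the paper does, that the extra term is the exponent of a compound Poisson process (bounded on the imaginary axis), so that adding it preserves \eqref{E:abs} and \eqref{E:reg}.

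The genuine gap is in your many-to-one step for part (ii). The identity you propose,
$\E\bigl(\sum_u v(-\xi^{(u)})\bigr)=\E^{(\gamma_0)}\bigl(\int_0^\infty\e^{\gamma_0\zeta_s}v(-\zeta_s)\,\dd s\bigr)$,
equates a sum over the \emph{discrete} point process of birth positions with an \emph{occupation-time} integral of a single spine, and these are different objects. Sanity check: with no branching ($\boldsymbol{\Lambda}_1$ trivial, $\kappa=\psi$) the left side equals $v(0)$ (only the root contributes, $\chi(\varnothing)=1$), while your right side is $\int\e^{2\gamma_0 y}v(y)v(-y)\,\dd y$. In Fourier terms, the $\gamma_0$-tilted intensity measure $\mu$ of $\sum_u\delta_{\log\chi(u)}$ has transform $\psi^{(\gamma_0+i\theta)}/\kappa^{(\gamma_0+i\theta)}$ (this is \cite[Lemma 2.7 and Proposition 2.12]{ssMt}), not the potential transform $-1/\kappa^{(\gamma_0)}$ of the spine; the two differ by the factor $-\psi^{(\gamma_0)}$, and it is exactly this factor which makes the convolution $v^{(\gamma_0)}\ast\mu$ come out as $\mathrm{w}^{(\gamma_0)}$. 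The paper's proof therefore does not pass through a spine at all: it writes $\E(L(x,T))=x^{-\gamma_0}\,(v^{(\gamma_0)}\ast\mu)(\log x)$ and identifies the convolution by matching Fourier transforms via \eqref{E:Fourier}. There is a correct spine route (the many-to-one formula for the weighted length measure $\uplambda^{\gamma_0}$ combined with the occupation density formula), but, as the paper itself stresses in the discussion following the proof, it only yields the identity for Lebesgue-almost every $x$, because the occupation formula integrates over levels; upgrading to every $x>0$ requires the continuity of $x\mapsto\E(L(x,T))$, which is precisely what the explicit convolution representation provides and which your plan does not address.
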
 
We stress that, thanks to Lemma \ref{L:expofam}(i) and as it should be expected,  the expression in (ii) actually does not depend on the choice of the real number $\gamma_0$ for which Assumption \ref{A:subcrit} is verified. 
\begin{proof}  (i) The claim is a refined version of  \cite[Lemma 5.5]{ssMt}. 
Since $\psi(\gamma_0)< \kappa(\gamma_0)<0$,  it is immediately checked (much as in the proof of Lemma \ref{L:expofam}(i)),  that the function
$$\phi(\cdot)=\psi(\gamma_0+\cdot)+ \kappa(\gamma_0)-\psi(\gamma_0)$$ 
is the exponent of a killed L\'evy process,  whose distribution is equivalent to the original one on any finite time-interval. In particular \eqref{E:occden} and \eqref{E:reg} are satisfied for $\phi$. On the other hand, we see from the very definition \eqref{E:kappa} that $\kappa^{(\gamma_0)}-\phi$ is the L\'evy exponent of a compound Poisson process 
(i.e. remains bounded on the purely imaginary line). As a consequence,  $\kappa^{(\gamma_0)}$  fulfils \eqref{E:occden} and \eqref{E:reg} as well.

 (ii) Without loss of generality, we may assume that $x'=1$. It is convenient to use the notation $\chi(u)$ for the value taken by the decoration at the origin (i.e. the left-extremity) of the segment
 $S_u$ for $u\in \U$, just as in  \cite[Chapter 2]{ssMt}. From the construction of the local time at level $x$ and \eqref{E:meanltssM}, we get
 the identity
 $$\E\left( L(x,T)\right) = \E \Big( \sum_{u\in \U} v\big(\log(x/\chi(u))\big) \Big).$$
 
 Consider the point processes on $\R$ induced by the logarithm of the  values of the decoration at origins of segments,
 $$\sum_{u\in \U} \updelta_{\log \chi(u)}.$$
 It is known that the generating function of its intensity is given by
 \begin{equation}\label{eq:transformée:chi:gamma}
 \E\left( \sum_{u\in \U} \chi(u)^{\gamma}\right) =\psi(\gamma)/\kappa(\gamma), \qquad\text{  whenever }\kappa\big(\Re(\gamma)\big)<0.
 \end{equation}
 See Lemma 2.7  and the proof of Proposition 2.12 in \cite{ssMt} (the calculation there is done when $\gamma$ is a real number, but remains valid more generally when $\gamma$  is complex). 
 
So, if we introduce the finite measure $\mu$ on $\R$ such that
$$\int_{\R} f(y) \mu(\dd y) = \E\left( \sum_{u\in \U} \chi(u)^{\gamma_0} f(\log(\chi(u))\right), $$
then the latter  has Fourier transform
$$\int_{\R} \e^{i \theta y} \mu(\dd y) = \frac{\psi(\gamma_0+i \theta)}{\kappa(\gamma_0+i\theta)} \ , \qquad \theta \in \R.$$
We  set $v^{(\gamma_0)}(y)= \e^{\gamma_0y} v(y) $, and we arrive at the expression in terms of a convolution
$$\E\left( L(x,T)\right) = x^{-\gamma_0} \cdot \left(v^{(\gamma_0)} \ast \mu\right)  (\log x).$$
 This quantity depends continuously on  $x>0$, since $v^{(\gamma_0)}\in \calC^0$ and $\mu$ is a finite measure. By \eqref{E:Fourier} for $\psi$ and \eqref{eq:transformée:chi:gamma}, the convolution $v^{(\gamma_0)} \ast \mu$  has Fourier transform $-1/\kappa^{(\gamma_0)}$, 
 and we can complete the identification using again \eqref{E:Fourier}, now for the L\'evy exponent $\kappa^{(\gamma_0)}$. \end{proof}
 
 As a quick verification of Proposition \ref{P:exlt}(ii), observe that, thanks to the occupation density formula \eqref{E:occdens},  the so-called weighted length measure $\uplambda^{\gamma_0}$ defined in \cite[Proposition 2.12]{ssMt}  by 
\begin{equation}\label{E:weightedlength}
\uplambda^{\gamma_0}(\dd t) = g(t)^{\gamma_0-\alpha}(t)\uplambda(\dd t), \qquad t\in T,
\end{equation}
has total mass
$$\uplambda^{\gamma_0}(T) = \int_0^{\infty}   x^{\gamma_0-1}   L(x, T) \dd x.$$
We know from \cite[Proposition 2.12]{ssMt} that
\begin{equation}\label{E:exlambda}
\E\left(\uplambda^{\gamma_0}(T)\right)= -1/\kappa(\gamma_0).
\end{equation}
The calculation in the proof of Proposition \ref{P:exlt}(ii) shows that this quantity indeed equals
$$\int_0^{\infty} x^{\gamma_0-1} \E(L(x,T)) \dd x.$$
We stress that this is however insufficient to recover the formula of Proposition \ref{P:exlt}(ii), even in the situation where $\gamma_0$ may be arbitrarily chosen in some open interval, since Proposition \ref{P:exlt}(ii) holds for all  --and not just almost all-- $x>0$.
\\

\begin{remark} One important feature of the function $x\mapsto L(x,T)$ is that, up to a multiplicative factor of $x^{\alpha-1}$, it is the occupation density  of the decoration. The study of occupation densities of random processes has been conducted in a wide range of settings. For instance, occupation densities have  been extensively studied for superprocesses with Brownian displacements, and these results can be further extended to tree‐indexed Brownian motion.  Concretely, Sugitani \cite{Sugitani} first proved that super‐Brownian motion with branching mechanism $\psi(\lambda)=2\lambda^2$ admits an occupation density, and this was later extended to general $\alpha$\nobreakdash-stable branching mechanisms in \cite{MytPer}.  Thanks to the connection between superprocesses and Lévy snakes \cite{DLG}, one can derive the existence of an occupation density for Brownian motion indexed by an $\alpha$-stable Lévy tree.  In the special case when $\alpha=2$, this yields the celebrated Integrated super Brownian excursion  $f_{\mathrm{ISE}} = (f_{\mathrm{ISE}}(x), x\in\R)$, which  has played a  fundamental role in scaling‐limit theorems of discrete trees, and appears in interacting‐particle systems and statistical‐physics models \cite{DS,HaS,HoS}. The study of $f_{\mathrm{ISE}}$ remains an active field of research. For instance, recent work of Le Gall and Perkins shows that $f_{\mathrm{ISE}}$ satisfies a stochastic differential equation \cite{LeGallPerkins2025}. It is worth mentioning that the naïve  analogue of the total mass $L(x,T)$ in the setting of Brownian motion indexed by a stable tree is almost surely degenerate (taking only the values $0$ or $\infty$), so the construction of  local time measures is not as straightforward as in this work, and requires additional care.  This task is carried over in \cite{RRO1}, where a local time measure for general  Markov processes indexed by  Lévy trees is obtained by an approximation procedure. 
\end{remark}

\subsection{Decoration from the root to a typical level point}\label{sec:decora:spine}
A key result about ssMt is the so-called many-to-one formula based on the weighted length measure 
$\uplambda^{\gamma_0}$,
see \cite[Corollary 5.8]{ssMt}. Let us present informally a weaker version that focusses on the spine and discards the so-called dangling subtrees attached to it.
Imagine that we mark a point $\uprho^{\bullet}$ in $T$ at random proportionally to $\uplambda^{\gamma_0}$. Then the decoration along the spine, that is the semi-open segment $\llbracket \uprho, \uprho^{\bullet}\llbracket$ from the root to this marked point, has the distribution of a self-similar Markov process with characteristics
$(\kappa^{(\gamma_0)}; \alpha)$. 
One of the main results of this work can be thought of as a version of the latter when the marked point is rather sampled uniformly at random in the level set $\{t\in T: g(t)=x'\}$. We now prepare some material to give a rigorous statement. 

Fix $x>0$. The normalized  local time measure yields  a natural probability measure $\Q^x$ on the space $\T^{\bullet}$ of decorated real trees with a marked point, $(T,g,\uprho')$, such that for every nonnegative functional $F$ on this space,
\begin{equation} \label{equation:spinalL}
    \E^x(F(T,g,\uprho')) = \frac{x^{\gamma_0}}{\mathrm{w}^{(\gamma_0)}(\log x)}\E\left( \int_T F(T,g,t) L(x, \dd t)\right).
\end{equation}
We are interested in the process
$$Y= g_{\mid \llbracket \uprho, \uprho'\rrbracket }$$
defined by the restriction of 
the decoration $g$  to the segment $\llbracket \uprho, \uprho'\rrbracket $ from the root $\uprho$ to the distinguished point $\uprho'$.
We  conveniently view  $Y$ as a process indexed by nonnegative times $Y=(Y_t)_{0\leq t \leq z'}$
by identifying $t\geq 0$ with the point of the segment $\llbracket \uprho, \uprho'\rrbracket $ at distance $t$ from the root.

We consider the self-similar Markov process with characteristics $(\kappa^{(\gamma_0)}; \alpha)$ and for every $x>0$, 
we write  $Q_{1,x}$ for the conditional version of its law starting from 1 and given that its terminal value is $x>0$. More precisely, $Q_{1,x}$ is the law of the process obtained by applying the Lamperti transformation to the conditional version of a L\'evy process with exponent $\kappa^{(\gamma_0)}$ started from $0$ and conditioned --in the sense of Section \ref{sec:condter}-- to terminate at $y=\log x$. We stress that,  thanks to Lemma \ref{L:expofam},  the law $Q_{1,x}$  does not depend on choice of  $\gamma_0$ in Assumption \ref{A:subcrit}, even though the unconditional law $Q=Q_1$ does.

\begin{theorem} \label{T1} 
Under $\Q^x$, $Y$ has the law $Q_{1,x}$. 
\end{theorem}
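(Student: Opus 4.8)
The plan is to reduce the statement about the decoration $Y$ along the spine $\llbracket \uprho, \uprho'\rrbracket$ to a corresponding statement about a single self-similar Markov process, and then to identify the latter via the Lamperti transformation with the conditioned L\'evy process of Section~\ref{sec:condter}. The starting observation is that the decoration $Y$ is, by construction, the trajectory of a self-similar Markov process with characteristics $(\psi;\alpha)$ run along the spine, but \emph{biased} by the way the marked point $\uprho'$ was sampled according to $L(x,\dd t)$. The key to unwinding this bias is to combine the many-to-one formula for $\uplambda^{\gamma_0}$ recalled above with the occupation density formula \eqref{E:occdens}, so that sampling proportionally to $L(x,\dd t)$ becomes sampling proportionally to $\uplambda^{\gamma_0}$ restricted to the level set $g=x$.

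First I would rewrite the defining identity \eqref{equation:spinalL} for $\Q^x$ in a form amenable to the many-to-one formula. Using the occupation density formula \eqref{E:occupdens} (equivalently the weighted-length construction \eqref{E:weightedlength}), sampling a point according to $L(x,\dd t)$ for fixed $x$ corresponds to disintegrating $\uplambda^{\gamma_0}$ along the levels of $g$. Consequently, the law of the decoration along the spine under $\Q^x$ is obtained from its law under the $\uplambda^{\gamma_0}$-biased many-to-one formula --- which by \cite[Corollary 5.8]{ssMt} is precisely the self-similar Markov process with characteristics $(\kappa^{(\gamma_0)};\alpha)$ --- by further conditioning on the event that the terminal value of this process equals $x$. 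This is exactly the heuristic sketched in Section~\ref{sec:decora:spine}, and the task is to make the conditioning rigorous.

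Next I would carry out the identification of this conditioned process with $Q_{1,x}$. Write $\xi^{(\gamma_0)}$ for the L\'evy process with exponent $\kappa^{(\gamma_0)}$; by Proposition~\ref{P:exlt}(i) it is a killed L\'evy process satisfying \eqref{E:abs} and \eqref{E:reg}, so the entire machinery of Section~\ref{sec:condter} applies. The many-to-one formula tells us that the unconditioned spine decoration is the Lamperti transform of $\xi^{(\gamma_0)}$ started from $0$; the biasing inherent in \eqref{equation:spinalL}, carried through the time-change, corresponds exactly to the $h$-transform \eqref{E:Doob} defining $P_{0,\log x}$ via the potential density $\mathrm{w}^{(\gamma_0)}$. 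The normalizing constant $x^{\gamma_0}/\mathrm{w}^{(\gamma_0)}(\log x)$ in \eqref{equation:spinalL} should match the factor $1/v(\log x)$ appearing in \eqref{E:Doob} once one accounts for the relation $\E(L(x,T))=x^{-\gamma_0}\mathrm{w}^{(\gamma_0)}(\log x)$ from Proposition~\ref{P:exlt}(ii) and the scaling \eqref{E:meanltssM}. Here Lemma~\ref{L:ltbiased}, which expresses the conditioned law as a local-time-biased average, is the natural bridge: biasing by the terminal local time is precisely biasing by $L(x,\dd t)$ on the level set, and this makes the matching between the two normalizations transparent.

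The main obstacle I anticipate is the rigorous justification of the conditioning step, namely showing that sampling from $L(x,\dd t)$ genuinely produces the \emph{conditioned} law $Q_{1,x}$ rather than merely the correct one-dimensional marginal. The delicate point is that the many-to-one formula identifies the spine decoration only under the $\uplambda^{\gamma_0}$ bias integrated over all levels, whereas Theorem~\ref{T1} requires the sharp statement at a single fixed level $x$ for \emph{every} $x>0$; as the remark after Proposition~\ref{P:exlt} emphasizes, integrated (almost-everywhere) identities are insufficient. I expect this to be resolved by invoking the $\P$-almost-sure approximation of $L(x,\dd t)$ from Proposition~\ref{P:conslt}, which expresses the local time measure as a limit of occupation measures $\frac{1}{2\epsilon_n}\indset{|\log(g(t)/x)|<\epsilon_n}\uplambda(\dd t)$; applying the many-to-one formula to these thin-slab occupation measures and passing to the limit should upgrade the almost-everywhere identity to the required everywhere statement, with the conditioning on $\xi^{(\gamma_0)}_{\zeta-}=\log x$ emerging in the limit exactly as the finite-horizon disintegration of Proposition~\ref{P:disint} predicts.
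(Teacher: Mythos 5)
Your proposal follows essentially the same route as the paper's proof: approximating $L(x,\dd t)$ by the thin-slab occupation measures of Proposition~\ref{P:conslt}, applying the many-to-one formula for $\uplambda^{\gamma_0}$ to these approximations, and passing to the limit via the Lamperti transformation and the disintegration of Proposition~\ref{P:disint} to obtain $Q_{1,x}$ at every fixed level $x$. The only step you leave implicit is the justification for exchanging the limit with the expectation (the paper does this by upgrading the a.s.\ convergence to $L^1(\Q)$ via Scheff\'e's lemma, using Proposition~\ref{P:exlt}(ii)), but this is a routine verification within the strategy you describe.
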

\begin{figure}[!h]
 \begin{center}
  \includegraphics[width=5cm,height=5cm]{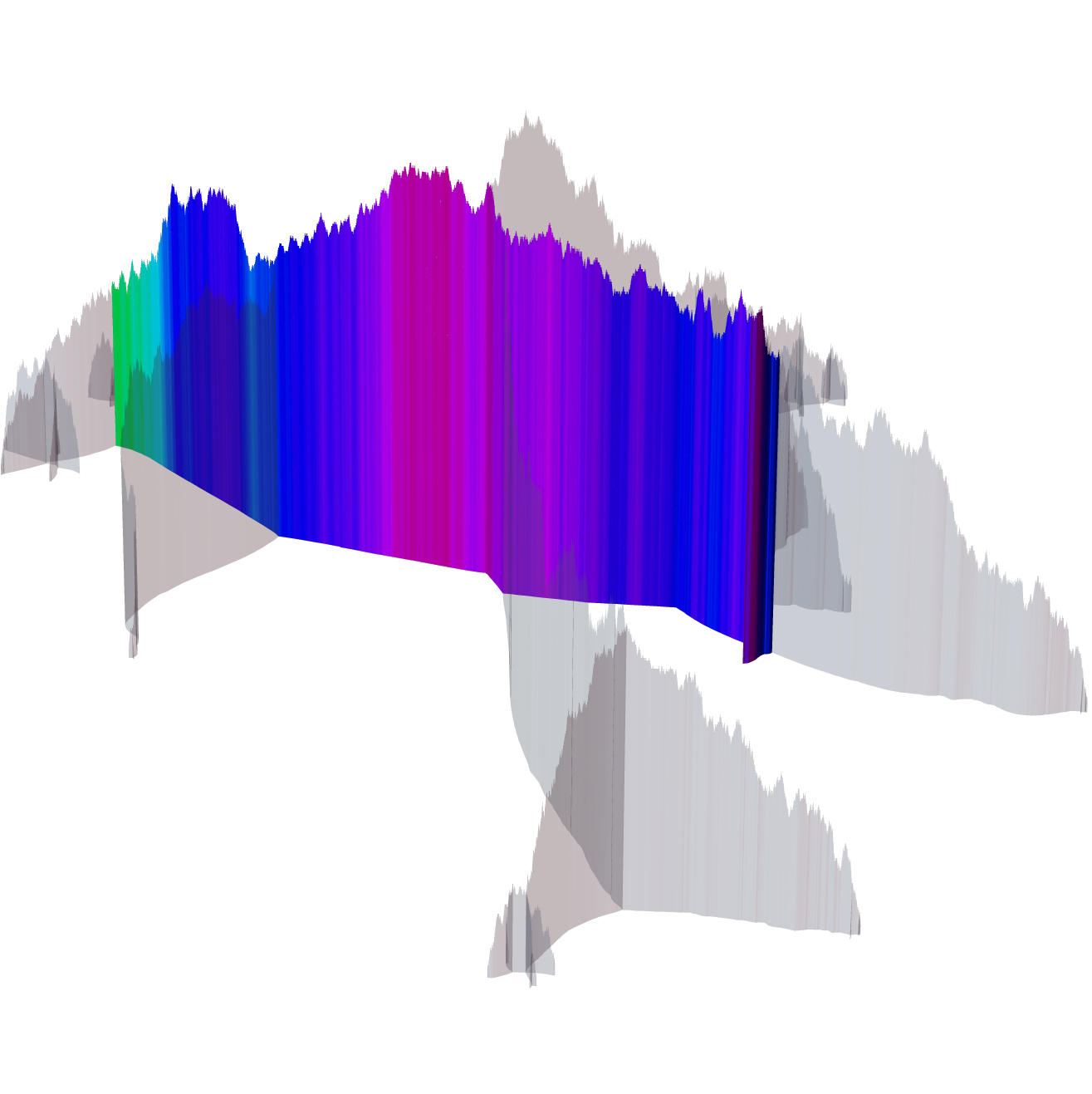}
 \caption{Illustration of Theorem \ref{T1}. The spine to a typical $L(x,\mathrm{d}t)$ point and the associated decoration is represented in color, and the rest of the decorated tree in gray. Here the decoration starts at the root at $1$ and we take $x=1/2$. The distribution of the decoration along this spine is distributed $Q_{1,x}$.} \label{fig:ssMt3}
 \end{center}
 \end{figure}
It may be also noteworthy to compare Theorem \ref{T1} with Lemma \ref{L:ltbiased}; the latter is indeed a special case of the former when $\kappa=\psi$, that is,  in absence of branching.

\begin{proof} In short, the idea is to deduce the claim from the many-to-one formula when the marked point is sampled according to  
$\uplambda^{\gamma_0}$. Roughly speaking, we re-weight $\uplambda^{\gamma_0}$ and make it close to the local time measure  $L(x, \dd t)$. The effect of 
the re-weighting on the spine amounts to forcing the decoration at the right-extremity of the spine (i.e. the marked point) to be close to $x$.

Recall Proposition \ref{P:conslt} and the sequence $(\epsilon_n)_{n\geq 1}$ of positive real numbers there. This incites us to use the density $\frac{1}{2\epsilon_n}  g(t)^{-\gamma_0} \indset{|\log(g(t)/x)|<\epsilon_n}$ to re-weight the measure  $\uplambda^{\gamma_0}$. 
Next, consider any continuous function with compact support $\varphi: \R_+\times \R_+\to \R_+$, and  define a function $F(T,g,\cdot)$ on $T$ by
 $$F(T,g,t) = \int_{\llbracket \uprho, t\rrbracket} \varphi(g(s),s) \dd s,\qquad t\in T,$$
 where we recall that $\llbracket \uprho, t\rrbracket$ denotes the segment from the root to $t$, and by a slight abuse of notation, we identify 
 a point $s$ in this segment with its distance to the root and write $\dd s $ for the length measure. So $F(T,g,t)$ is a smooth functional of the restriction of the decoration to the 
 segment $\llbracket \uprho, t\rrbracket$, and 
 the function $F(T,g,\cdot)$
  is continuous on $T$, $\Q_x$-a.s. Therefore we have from Proposition \ref{P:conslt} that
 $$
 \lim_{n\to \infty} \frac{1}{2\epsilon_n} \int_T F(T,g,t) g(t)^{-\gamma_0} \indset{|\log(g(t)/x)|<\epsilon_n} \uplambda^{\gamma_0}(\dd t)
 =  \int_T F(T,g,t) L(x, \dd t).
$$
We claim that the limit above also holds in $L^1(\Q)$. Indeed, it follows from Proposition~\ref{P:exlt}(ii) and the occupation density formula that 
\begin{align*} \frac{1}{2\epsilon_n} \E\left( \int_T g(t)^{-\alpha} \indset{|\log(g(t)/x)|<\epsilon_n} \uplambda(\dd t)\right) & =  \frac{1}{2\epsilon_n} \int_0^{\infty} y^{-1} \indset{|\log(y/x)|<\epsilon_n}\E(L(y, T)) \dd y\\
&\rightarrow  \E(L(x,T)) \qquad \text{ as }n\to \infty.
\end{align*}
So by the Scheff\'e lemma, the convergence
 $$
 \lim_{n\to \infty} \frac{1}{2\epsilon_n} \int_T g(t)^{-\alpha} \indset{|\log(g(t)/x)|<\epsilon_n} \uplambda(\dd t)
 =  L(x,T)
$$
also takes place in $L^1(\Q)$, which suffices for our claim. 

We now compute 
 $$
 \frac{1}{2\epsilon_n} \E\left(   \int_T F(T,g,t) g(t)^{-\gamma_0} \indset{|\log(g(t)/x)|<\epsilon_n} \uplambda^{\gamma_0}(\dd t) \right)
$$
using the many-to-one formula \cite[Corollary 5.8]{ssMt} for the weighted length measure $\uplambda^{\gamma_0}$.  Recalling \eqref{E:exlambda}, 
we get
$$-\frac{ 1}{2\epsilon_n \kappa(\gamma_0)}Q_1\left( \left( \int_0^{z} \varphi(X_s,s) \dd s \right) \cdot  X_{z-}^{-\gamma_0} \indset{|\log(X_{z-}/x)|<\epsilon_n} \right),$$
where  $Q_1$ denotes the law of the self-similar Markov process  $(X_s)_{0\leq s < z}$ with characteristics $(\kappa^{(\gamma_0)}; \alpha)$ and started from $1$. 

Applying the Lamperti transformation and using Proposition \ref{P:disint} for the L\'evy process with exponent $\kappa^{(\gamma_0)}$, 
we now get that
\begin{align*}
& \lim_{n\to \infty} \frac{1}{2\epsilon_n}Q_1\left( \left( \int_0^{z} \varphi(X_s,s) \dd s \right) \cdot  X_{z-}^{-\gamma_0} \indset{|\log(X_{z-}/x)|<\epsilon_n} \right) \\
&= -\kappa(\gamma_0)x^{-\gamma_0} \mathrm{w}^{(\gamma_0)}(\log x) Q_{1,x}\left( \int_0^{z} \varphi(X_s,s) \dd s \right),
\end{align*}
where $Q_{1,x}$ is the law of the image by the Lamperti transformation of a L\'evy process with exponent $\kappa^{(\gamma_0)}$ started from $0$ and conditioned 
(in the sense of Section \ref{sec:condter}) to terminate at $\log x$.

Putting the pieces together, we have established the identity
$$ \E\left(   \int_T F(T,g,t) L(x,\dd t) \right)=x^{-\gamma_0} \mathrm{w}^{(\gamma_0)}(\log x) Q_{1,x}\left( \int_0^{z} \varphi(X_s,s) \dd s \right),
$$
Recalling Proposition \ref{P:exlt} (ii), this entails our claim.
\end{proof}

As in the discussion following the proof of Proposition \ref{P:exlt} (ii), let us point out that if one could establish the continuity of the map $x \longmapsto \E^x\big(F(T,g,\uprho')\big)$, $x>0, $
then Theorem \ref{T1} would follow directly by a disintegration argument. Let us briefly outline this approach. Fix a function $F(T,g,\cdot)$ and a test function $\varphi$ as in the previous proof. It is enough to verify that
\begin{equation}\label{eq:theo:4.3}
\E^x\big(F(T,g,\uprho')\big)
=
Q_{1,x}\Big( \int_0^{z} \varphi(X_s,s)\, \dd s \Big).
\end{equation}
To this end, introduce an auxiliary bounded continuous function $h:\mathbb{R}\to\mathbb{R}_+$. Applying the many-to-one formula for the weighted length measure $\uplambda^{\gamma_0}$ (see \cite[Corollary 5.8]{ssMt}), we obtain
\begin{align}\label{eq:1:spinal:g:gamma}
\E\Big(\int_T F(T,g,t)\, h(g(t))\, g(t)^{\gamma_0-\alpha}\, \uplambda(\dd t)\Big)
=
Q_1\Big(\frac{h(X_{z-})}{-\kappa(\gamma_0)}\int_0^{z} \varphi(X_s,s)\, \dd s\Big).
\end{align}
We may now invoke Proposition \ref{P:disint} to rewrite the right-hand side as
$$
\int_0^{\infty}
\frac{\mathrm{w}^{(\gamma_0)}(\log x)}{x}\,
h(x)\,
Q_{1,x}\Big(\int_0^{z} \varphi(X_s,s)\, \dd s\Big)\,
\dd x.
$$
Moreover, by the occupation density formula, the left-hand side of \eqref{eq:1:spinal:g:gamma} can be expressed as
$$
\int_0^{\infty}
\frac{\mathrm{w}^{(\gamma_0)}(\log x)}{x}\,
h(x)\,
\E^x\big(F(T,g,\rho^\prime)\big)\,
\dd x.
$$ 
Comparing these two expressions, we deduce that \eqref{eq:theo:4.3} holds for Lebesgue-almost every $x>0$. Finally, Proposition \ref{P:disint} ensures that the right-hand side of \eqref{eq:theo:4.3} is a continuous function of $x>0$ and  in order to conclude, it suffices to establish the continuity of the map $x\mapsto \E^x\big(F(T,g,\uprho')\big)$. This is immediate when the local times of the underlying Lévy process $(\ell(x,t): x\in\mathbb{R}\text{ and } t\geq 0)$ are jointly continuous, by combining Proposition \ref{P:exlt}(ii) with a dominated convergence argument. However, under our weaker assumptions,  the desired continuity does not follow directly. Proving it would require an argument akin to that used in the proof of Proposition \ref{P:exlt}, but substantially more involved than the one presented in the above proof. 
\\
\\

We next present a pair of immediate  consequence of Theorem \ref{T1} involving time-reversal, which are closely related to the so-called Riesz-Bogdan-{\.Z}ak transformation for self-similar Markov processes; see \cite{Alili, Kypsurvey}.
For every $x>0$, 
we write $\hat Q_x$ for the law of the self-similar Markov process with characteristics $(\hat \kappa^{(\gamma_0)}; \alpha)$ started from $x$,
where $\hat \kappa^{(\gamma_0)}(\cdot)= \kappa(\gamma_0-\cdot)$ is the dual L\'evy exponent, 
and then  $\hat Q_{x,1}$ for its conditional version given that the terminal value is $1$. 

\begin{corollary} \label{C:reversed} The following two assertions hold under $\Q^x$:
\begin{itemize} 
\item [(i)] The process $\hat Y$ that describes the (rcll version of the) decoration from the marked point $\uprho'$ back to the root $\uprho$ has the law $\hat Q_{x,1}$. 
\item [(ii)] The variables 
$\max Y$ and $x/\min Y$ have the same law.
\end{itemize}
\end{corollary}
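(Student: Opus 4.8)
The plan is to derive both assertions directly from Theorem~\ref{T1} together with the time-reversal identity for conditioned L\'evy processes established in Corollary~\ref{C:dual}. The key observation is that under $\Q^x$, the decoration $Y$ along the spine $\llbracket \uprho, \uprho'\rrbracket$ has law $Q_{1,x}$, which by construction is the image under the Lamperti transformation of a L\'evy process $\xi$ with exponent $\kappa^{(\gamma_0)}$ started from $0$ and conditioned to terminate at $\log x$; that is, $Y_t = \e^{\xi_s}$ with the usual Lamperti time-change $t = \int_0^s \e^{\alpha \xi_r}\dd r$. The reversed decoration $\hat Y$ corresponds, at the level of the driving L\'evy process, to the time-reversal of $\xi$ under its conditioned law.

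For assertion~(i), I would first observe that the Lamperti transformation intertwines the time-reversal of the self-similar Markov process with the time-reversal of the underlying L\'evy process, up to the compatible reversal of the time-change (this is a routine but slightly delicate verification, since the total lifetime $z = \int_0^\zeta \e^{\alpha\xi_s}\dd s$ must itself be reversed consistently). Granting this, the reversed driving process is the time-reversal of $\xi$ under $P_{0,\log x}$ for the exponent $\kappa^{(\gamma_0)}$. By Corollary~\ref{C:dual}, this time-reversed process has the law $\hat P_{\log x, 0}$, where $\hat P$ refers to the dual L\'evy process $-\xi$, whose exponent is precisely $\hat\kappa^{(\gamma_0)}(\cdot) = \kappa(\gamma_0 - \cdot)$. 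Applying the Lamperti transformation to this dual conditioned process yields exactly the law $\hat Q_{x,1}$, which is the claim. I would take care that the starting value $\log x$ of the dual process corresponds to the self-similar Markov process started from $x$, and the terminal value $0$ to termination at $\e^0 = 1$.

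For assertion~(ii), I would deduce it as a corollary of (i) by noting that the range of the decoration is invariant under time-reversal: the set of values $\{Y_t : 0\le t \le z\}$ and $\{\hat Y_t : 0 \le t \le \hat z\}$ coincide, so in particular $\max Y = \max \hat Y$ and $\min Y = \min \hat Y$. Under $\Q^x$, the process $Y$ runs from $g(\uprho)=1$ to $g(\uprho')=x$ and has law $Q_{1,x}$, whereas $\hat Y$ runs from $x$ to $1$ and has law $\hat Q_{x,1}$. At the level of the driving L\'evy processes, passing from $Y$ to $\hat Y$ corresponds to replacing $\xi$ by $-\xi$ and swapping the endpoints; exponentiating, this exchanges suprema with the reciprocals of infima. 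Concretely, $\max \hat Y = \e^{\sup(-\xi)}\cdot(\text{scaling})$ matches $x/\min Y$ after accounting for the fixed endpoints $1$ and $x$. The cleanest route is to write $\min Y = \e^{\inf \xi^{\mathrm{cond}}}$ and $\max Y = \e^{\sup\xi^{\mathrm{cond}}}$ for the conditioned driving process $\xi^{\mathrm{cond}}$ under $P_{0,\log x}$, and then invoke Corollary~\ref{C:dual}: the reversed process has law $\hat P_{\log x, 0}$, so after the reflection $\xi \mapsto \log x - \xi$ built into the duality, one checks that $\sup$ and $\log x - \inf$ have the same law, which is exactly the statement $\max Y \egaldistr x/\min Y$.

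The main obstacle I anticipate is the careful bookkeeping in establishing that the Lamperti transformation genuinely commutes with time-reversal in assertion~(i), including the correct reversal of the lifetime and the identification of the dual exponent $\hat\kappa^{(\gamma_0)}(\cdot)=\kappa(\gamma_0-\cdot)$ as the genuine L\'evy exponent of $-\xi$ under the tilting by $\gamma_0$. One must verify that conditioning to terminate at $\log x$ and then reversing yields precisely the dual process started from $\log x$ and conditioned to terminate at $0$, rather than some other boundary behavior; this is where Corollary~\ref{C:dual} does the essential work, but the passage through the nonlinear Lamperti time-change requires attention so that no spurious Jacobian or endpoint mismatch creeps in. Once this intertwining is secured, both (i) and (ii) follow with little further effort.
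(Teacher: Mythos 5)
Your proposal is correct and follows essentially the same route as the paper, which likewise deduces (i) from Theorem~\ref{T1}, Corollary~\ref{C:dual} and the Lamperti transformation, and (ii) from the identity $\max Y=\max\hat Y$ together with the Lamperti transformation. The details you supply (the intertwining of time-reversal with the Lamperti time-change and the identification of $\hat\kappa^{(\gamma_0)}$) are exactly the verifications the paper leaves implicit.
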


\begin{proof} The first claim is plain from Theorem \ref{T1}, Corollary \ref{C:dual}, and the Lamperti transformation. The second follows easily from the identity 
$\max Y= \max \hat Y$ and again using the Lamperti transformation. 
\end{proof}
Let us further point out in this direction,  that the distribution of $\max Y$ under $Q_{1,x}$ can be determined, at least theoretically, combining the Lamperti transformation and fluctuation theory for L\'evy processes, and more specifically the Wiener-Hopf factorization.

We now conclude this section with a comment intended for readers having already a solid acquaintance  with the notions developed in \cite[Chapters 2 and 5]{ssMt}.
\begin{remark}The argument of the proof of Theorem \ref{T1} enables us  to describe more generally --although less explicitly--  the reproduction process along the marked segment $\llbracket \uprho, \uprho'\rrbracket$.
Note that here the right-extremity is included, in order to take into account the fringe  subtree rooted at $\uprho'$, that is $T'\subset T$ and $t'\in T'$ if and only if $\uprho'\in \llbracket \uprho,t'\rrbracket $. 
Recall from  \cite[Section 5.2]{ssMt} that under $\Q$,  if we mark a point $\uprho^{\bullet}$ in $T$ at random proportionally to $\uplambda^{\gamma_0}$, then the law of the
decoration-reproduction process along $\llbracket \uprho, \uprho^{\bullet}\rrbracket$ is characterized by the quadruplet $(\sigma^2, \mathrm{a}_{\gamma_0}, \boldsymbol{\Lambda}_{\gamma_0}; \alpha)$ given in \cite[Eq.(5.13) and Lemma 5.5]{ssMt}. The same calculation as in the proof of Theorem \ref{T1} shows that the  law of the
decoration-reproduction process along $\llbracket \uprho, \uprho'\rrbracket$ under $\Q^x$ can be expressed as the Doob transform of that along $\llbracket \uprho, \uprho^{\bullet}\rrbracket$, 
using the supermartingale $\mathrm{w}^{(\gamma_0)}(\log(x/ g(t)))$. One can further check that under $\Q^x$, the decorated fringe subtree rooted at $\uprho'$ is independent of 
 the decorated subtree pruned at $\uprho'$, and has the law $\Q_{x}$. 
\end{remark}

\section{Local time approximation of the harmonic measure}
\label{sec:convergenceH}

In short, our next purpose is to show that, under appropriate assumptions, the  harmonic measure $\upmu(\dd t)$ on the self-similar Markov tree arises as the normalized limit of the local time measures $L(x, \dd t)$  as $x\to 0+$. Recall from \cite[Section 2.3.2]{ssMt} that the existence of $\upmu$ requires  a  Cram\'er-type assumption for the cumulant function $\kappa$; see \cite[Assumption 2.13]{ssMt}. It will be convenient here to use the following slightly stronger 
version\footnote{Beware that  for the sake of simplification,  we use the notation $\omega$  here in place of $\omega_-$ in \cite{ssMt}.}, which we enforce throughout this section.

\begin{assumption}\label{A:omega-}
Suppose that there exist $\omega>0$ and $p\in(1,2]$ such that\begin{itemize}
\item[(i)]  $\kappa$ takes finite values on $[\omega, p\omega]$ with 
$ \kappa(\omega)=0$ and $\kappa'(\omega)\in(-\infty,0)$,
\item[(ii)] $ \int_{\mathcal{S}_1}\boldsymbol{\Lambda}_1(\mathrm{d} \mathbf{y} ) ~\left(\sum_{i=1}^{\infty} \e^{ y_i\omega}\right)^p< \infty$.\end{itemize}
\end{assumption}

Roughly speaking, the fact that $\omega$ is a root of the cumulant $\kappa$ yields a so-called intrinsic martingale \cite[Lemma 2.14]{ssMt}.
The further conditions in Assumption \ref{A:omega-} ensure the uniform integrability of this martingale, which in turn is the key for the construction of the harmonic measure $\upmu$. See \cite[Proposition 1.11 and Section 2.3.2]{ssMt} for details. 
We also recall from \cite[Section 2.3.3]{ssMt} that $\upmu$ arises as limit of weighted length measures \eqref{E:weightedlength}. Specifically, there exists a sequence of positive real numbers $(\gamma_n)_{n\geq 1}$ that decreases to $\omega$ such that, $\Q$-a.s., the sequence of finite measures 
$-\kappa(\gamma_n)  \uplambda^{\gamma_n}$ on $T$ converges weakly towards $\upmu$ as $n\to \infty$.  We also recall from  \cite[Eq.(2.30)]{ssMt} that 
\begin{equation}\label{equation:moyenne:harm:mu}
    \mathbb{E}_y(\upmu(T)) = y^{\omega}, \quad  \text{for }  y > 0.
\end{equation}

The purpose of  the section is to point at further approximations of $\upmu$ using local times and the following counting measures on $T$. To start with, introduce
for every $x>0$ the  first hitting line  of the decoration at level  $x$,
$$\mathcal{H}_x=\{t\in T: g(t)=x \text{ and } g(s)\neq x \text{ for all }s\prec t\},$$
where the notation $s\prec t$ means that $s$ belongs to the right-open segment $\llbracket \uprho, t \llbracket$ from the root to $t$. 
Then $\mathcal{H}_x$ is a finite set a.s. and we write $N(x, \dd t)$ for its counting measure on $T$; in particular $N(x,T)=\#\mathcal{H}_x$ . 
The normalizations appearing  in the claim below can be determined from Proposition \ref{P:exlt}(ii) and the forthcoming Lemma \ref{L:anotherstep}(iii).

\begin{theorem}\label{theorem:convergence}
Let Assumption \ref{A:omega-} hold. Then, there exists a sequence $(x_n)_{n\geq 1}$ of positive real numbers with $x_n\to 0$ as $n\to\infty$, such that, $\Q$-a.s. we have
    \begin{equation*}
        \lim_{n \to \infty} \frac{L(x_n, \dd t)}{\E(L(x_n, T))} =
        \lim_{n \to \infty} \frac{N(x_n, \dd t)}{\E(N(x_n, T))} = \upmu(\dd t)
    \end{equation*}
in the sense of weak convergence for finite measures on $T$.
\end{theorem}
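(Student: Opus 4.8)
The plan is to realise $\upmu$ as the terminal value of a martingale indexed by the decreasing level $x$, to identify this martingale with the normalised counting measures $N(x,\dd t)$, and then to handle the local time measures $L(x,\dd t)$ by showing that they concentrate around a fixed multiple of the counting measures. Throughout I fix a countable convergence-determining family $\mathcal D$ of continuous functions $\psi\colon T\to\R_+$ and write $\langle\nu,\psi\rangle=\int_T\psi\,\dd\nu$.

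\emph{Martingale identification of $N$.} Let $\mathcal G_x$ be the $\sigma$-field generated by the decorated tree explored before first reaching level $x$, together with the first-passage line $\mathcal H_x$. As $x\downarrow0$ the fields $\mathcal G_x$ increase to the $\sigma$-field generated by $(T,g)$, so by L\'evy's upward theorem $M^\psi_x:=\E(\langle\upmu,\psi\rangle\mid\mathcal G_x)\to\langle\upmu,\psi\rangle$ $\Q$-a.s.\ and in $L^1$. By the branching Markov property of the ssMt at the line $\mathcal H_x$ (the analogue of the rescaling in \cite[Proposition 2.9]{ssMt} applied at each first-passage point), conditionally on $\mathcal G_x$ the subtrees $T^{(t)}$ pending at the points $t\in\mathcal H_x$ are independent rescaled copies of the ssMt started from $x$, and $\upmu$ is the sum of their harmonic measures. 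Combining this with \eqref{equation:moyenne:harm:mu} gives $M^\psi_x=\sum_{t\in\mathcal H_x}\int_{T^{(t)}}\psi\,\dd\bar\upmu_{x,t}$, where $\bar\upmu_{x,t}=\E_x(\upmu)$ is the deterministic rescaled mean harmonic measure, of total mass $x^{\omega}$. Since $\operatorname{diam}T^{(t)}\to0$ uniformly as $x\to0$ (a consequence of the compactness estimate \eqref{sum:finite}) and $x^{\omega}N(x,T)$ stays bounded (uniform integrability of the intrinsic martingale under Assumption \ref{A:omega-}), continuity of $\psi$ yields $\bigl|M^\psi_x-x^{\omega}\langle N(x),\psi\rangle\bigr|\le x^\omega N(x,T)\cdot\sup_{t}\operatorname{osc}_{T^{(t)}}\psi\to0$. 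Hence $x^{\omega}\langle N(x),\psi\rangle\to\langle\upmu,\psi\rangle$ $\Q$-a.s., and dividing by $\E(N(x,T))\sim c\,x^{-\omega}$ (Lemma \ref{L:anotherstep}(iii)) settles the counting part.

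\emph{Local time via counting.} Conditioning again on $\mathcal G_x$, and noting that the local time at level $x$ receives no contribution above $\mathcal H_x$, one gets $\langle L(x),\psi\rangle=\sum_{t\in\mathcal H_x}\psi(t)\,L^{(t)}(x,T^{(t)})$ up to the same oscillation error, where conditionally the $L^{(t)}(x,T^{(t)})$ are independent with mean $v(0)$ by \eqref{E:meanltssM} (the mean total local time of a ssMt at its own starting level). Thus $\E(\langle L(x),\psi\rangle\mid\mathcal G_x)\approx v(0)\langle N(x),\psi\rangle$, and the real work is to control the conditional fluctuations. Assumption \ref{A:omega-}(ii) supplies a finite $p$-th moment for $L^{(t)}(x,T^{(t)})$, so a Marcinkiewicz--Zygmund bound gives $\E\bigl(|\langle L(x),\psi\rangle-v(0)\langle N(x),\psi\rangle|^{p}\mid\mathcal G_x\bigr)\le C\sum_{t\in\mathcal H_x}\psi(t)^{p}\le C'N(x,T)$, of order $x^{-\omega}$, hence of smaller order than $(x^{-\omega})^{p}$. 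The relative fluctuation of $\langle L(x),\psi\rangle$ about $v(0)\langle N(x),\psi\rangle$ is therefore $O(x^{\omega(1-1/p)})\to0$ in $L^p$; thinning to a geometric sequence $x_n\to0$ makes the errors summable, and Borel--Cantelli upgrades this to the $\Q$-a.s.\ statement $x_n^{\omega}\langle L(x_n),\psi\rangle\to v(0)\langle\upmu,\psi\rangle$. Dividing by $\E(L(x,T))\sim|\kappa'(\omega)|^{-1}x^{-\omega}$, whose asymptotics follow from Proposition \ref{P:exlt}(ii) together with Lemma \ref{lemma:asymv} applied to $\kappa^{(\gamma_0)}$ (its Cram\'er root is $\omega-\gamma_0$ and $(\kappa^{(\gamma_0)})'(\omega-\gamma_0)=\kappa'(\omega)$), gives the local time part along the same sequence $(x_n)$.

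\emph{Cross-check and main obstacle.} As a consistency check on the normalisations, the occupation-density formula \eqref{E:occupdens} with $\varphi(x,t)=x^{\gamma-\alpha}\psi(t)$ yields the Mellin identity $\langle\uplambda^{\gamma},\psi\rangle=\int_0^{\infty}x^{\gamma-1}\langle L(x),\psi\rangle\,\dd x$; inserting the known a.s.\ limit $-\kappa(\gamma_n)\uplambda^{\gamma_n}\to\upmu$ and using the Abelian/Tauberian correspondence near $\gamma=\omega$ recovers $x^\omega\langle L(x),\psi\rangle\to v(0)\langle\upmu,\psi\rangle$ in the Cesàro sense, in agreement with the previous step. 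I expect the main obstacle to be precisely the local time step: converting the conditional $L^p$ concentration of $\langle L(x),\psi\rangle$ around $v(0)\langle N(x),\psi\rangle$ into an almost sure statement valid simultaneously for every $\psi\in\mathcal D$ and along a single deterministic sequence $x_n\to0$. This is where Assumption \ref{A:omega-}(ii) is indispensable, as it furnishes the finite moments of the per-subtree local times needed for the concentration estimate, and it is also why a sequence, rather than the continuous limit $x\to0+$, appears in the statement: the Borel--Cantelli step needs the summability obtained by thinning, and a diagonal extraction over the countable family $\mathcal D$ then produces one sequence handling all test functions at once.
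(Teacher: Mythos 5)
The central step of your argument --- the identification $M^\psi_x=\sum_{t\in\mathcal H_x}\int_{T^{(t)}}\psi\,\dd\bar\upmu_{x,t}$, resting on the claim that ``$\upmu$ is the sum of the harmonic measures of the subtrees pending at the points of $\mathcal H_x$'' --- is false, and the error is fatal rather than cosmetic. The decoration along a branch of $T$ is discontinuous and can jump across the level $x$ without ever hitting it; the probability that the spine to a $\upmu$-typical point passes through a point with decoration exactly $x$ is $\mathrm w^{(\omega)}(\log x)/\mathrm w^{(\omega)}(0)<1$, and this does not tend to $1$ as $x\to0$ (it tends to $-1/(\kappa'(\omega)\mathrm w^{(\omega)}(0))$). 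Hence a non-vanishing fraction of the harmonic mass sits on $T_{\neq x}$ and is never captured by the fringe subtrees rooted at $\mathcal H_x$. Your intermediate conclusion $x^\omega\langle N(x),\psi\rangle\to\langle\upmu,\psi\rangle$ is therefore quantitatively wrong: taking $\psi\equiv1$ it contradicts Proposition \ref{P:cvnombrepar} and Lemma \ref{L:anotherstep}(iii), which give $x^{\omega}N(x,T)\to-\upmu(T)/(\kappa'(\omega)\mathrm w^{(\omega)}(0))$ and $\E(N(x,T))=x^{-\omega}\mathrm w^{(\omega)}(\log x)/\mathrm w^{(\omega)}(0)$, not $x^{-\omega}$. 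The martingale that does converge to $\upmu(T)$ is the one attached to the first-passage-\emph{below} decomposition $T_{>x}$, namely $\sum_{i\in I_{>x}}\ell_i^{\omega}=\E(\upmu(T)\mid\mathcal F_{>x})$ (Lemma \ref{L:anotherstep}(i)): every branch must pass below $x$, but not every branch hits $x$. Relating this martingale to the counting measure $N(x,\cdot)$ is precisely the hard part of the proof; the paper does it by decomposing at the intermediate level $\sqrt x$, writing $x^{\omega}N(x,T)=\sum_{i\in I_{>\sqrt x}}\ell_i^{\omega}b_i(x)$ with conditionally independent $b_i(x)$ of mean $\mathrm w^{(\omega)}(\log(x/\ell_i))/\mathrm w^{(\omega)}(0)$, and invoking a conditional law of large numbers. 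None of this is present in your proposal.

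Three further points. First, even where your decomposition at $\mathcal H_x$ is legitimate (for the local time, which indeed only charges descendants of $\mathcal H_x$), the oscillation bound conflates the realized fringe subtree $T^{(t)}$ with its conditional law given $\mathcal G_x$, and the uniform smallness of $\operatorname{diam}T^{(t)}$ over the roughly $x^{-\omega}$ many points of $\mathcal H_x$ does not follow from \eqref{sum:finite} alone. Second, Assumption \ref{A:omega-}(ii) controls $p$-th moments of the intrinsic martingale, not of $L(1,T)$; your Marcinkiewicz--Zygmund step needs a moment bound you have not established, and it is unnecessary --- the paper obtains $L(x,T)/N(x,T)\to\E(L(1,T))=\mathrm w^{(\omega)}(0)$ from the weak law of large numbers for i.i.d.\ variables with finite mean, plus Scheff\'e's lemma. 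Third, a deterministic countable convergence-determining family of test functions cannot be fixed in advance on the random compact space $T$; the paper's Lemma \ref{lemma:weakConv}, which upgrades convergence of the masses $\nu_n(T_u)$, $u\in\U$, to weak convergence using that $\upmu$ is carried by $\partial T$, is designed to circumvent exactly this. Your overall architecture (a martingale argument for $N$, then a law of large numbers for $L$, then thinning and Borel--Cantelli) is sound in outline and not far from the paper's, but the proof of the key total-mass convergence for $N$ is missing.
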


The rest of this section is devoted to the proof of
Theorem~\ref{theorem:convergence}, which proceeds in several steps.
Let us first  stress that the convergence of the rescaled local time measures is
consistent with the results of the previous sections. Indeed, combining
Theorem~\ref{T1} with Proposition \ref{convergence:P:x:y}, it follows that the law of the spine decoration
$$
g_{\mid\llbracket\uprho,\uprho'\rrbracket},
\qquad\text{under }\Q^x,
$$
converges as $x\downarrow 0$ to a self-similar Markov process with
characteristics $(\kappa^{(\omega)},\alpha)$.
This limiting law is exactly the distribution of the decoration along the
spine from the root to a point sampled according to the harmonic measure
$\upmu$, after biasing by the total harmonic mass $\upmu(T)$
(see \cite[Chapter~5]{ssMt}).

However, this spine convergence by itself does not imply
Theorem~\ref{theorem:convergence} and we will not use it directly: to obtain weak convergence of the random
measures it is crucial to control their total masses. Actually, the key step of the proof consists in  proving the convergence of total masses, see Proposition \ref{P:cvnombrepar} below; 
the convergence of the measures themselves then follows by standard techniques. 
 We start with the following version of Proposition \ref{P:exlt} that will be needed in the sequel.
  \begin{proposition}\label{P:exltomega} We have:\begin{enumerate}
\item[(i)]
The shifted cumulant, 
$$\kappa^{(\omega)}(\cdot)=\kappa(\omega+\cdot)$$ is a L\'evy exponent with no killing for which \eqref{E:occden} and \eqref{E:reg} are satisfied.
Its potential density  $\mathrm{w}^{(\omega)}$ is continuous and satisfies
$$\lim_{y\to \infty} \mathrm{w}^{(\omega)}(y)=0 \quad \text{and} \quad \lim_{y\to -\infty} \mathrm{w}^{(\omega)}(y)=-1/\kappa'(\omega) .$$
\item[(ii)] For every $x,x'>0$, we have
$$\E_{x'}\big( L(x,T)\big) = (x'/x)^{\omega}\mathrm{w}^{(\omega)}(\log(x/x')).$$
\end{enumerate}
\end{proposition}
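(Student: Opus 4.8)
The plan is to deduce both assertions from Proposition \ref{P:exlt} by transporting them from an exponent $\kappa^{(\gamma_0)}$ with strict killing --- available since Assumption \ref{A:subcrit} provides some $\gamma_0>0$ with $\kappa(\gamma_0)<0$ --- to the boundary exponent $\kappa^{(\omega)}$, using the Cram\'er-type tilt of Lemma \ref{L:expofam:gamma:-}. The key observation is that $\kappa^{(\omega)}$ is \emph{exactly} the $\varrho$-tilt of $\kappa^{(\gamma_0)}$ for $\varrho=\omega-\gamma_0$: since $\kappa$ is convex with $\kappa(\omega)=0$ and $\kappa'(\omega)<0$, the set $\{\kappa<0\}$ is an interval with left endpoint $\omega$, so every admissible $\gamma_0$ satisfies $\gamma_0>\omega$, whence $\varrho<0$ and $\kappa^{(\gamma_0)}(\varrho)=\kappa(\omega)=0$ identifies $\varrho$ as the negative Cram\'er root of $\kappa^{(\gamma_0)}$. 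I stress that a direct repetition of the Fourier/convolution computation of Proposition \ref{P:exlt}(ii) at $\gamma=\omega$ is \emph{not} available, because the intensity $\psi(\omega)/\kappa(\omega)$ appearing there becomes infinite; routing through $\gamma_0$ and Lemma \ref{L:expofam:gamma:-} is precisely what bypasses this degeneracy.

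For assertion (i), I would first recall from Proposition \ref{P:exlt}(i) that $\kappa^{(\gamma_0)}$ is a killed L\'evy exponent satisfying \eqref{E:occden} and \eqref{E:reg}, with continuous potential density $\mathrm{w}^{(\gamma_0)}$. Applying Lemma \ref{L:expofam:gamma:-} to the base process $\kappa^{(\gamma_0)}$ and its root $\varrho=\omega-\gamma_0$ then shows that $\kappa^{(\omega)}=\kappa^{(\gamma_0)}(\varrho+\cdot)$ is the exponent of a L\'evy process with no killing (consistent with $\kappa^{(\omega)}(0)=\kappa(\omega)=0$) that drifts to $-\infty$ (since $(\kappa^{(\omega)})'(0)=\kappa'(\omega)<0$), and moreover that its potential density is
\begin{equation}\label{eq:wtilt-plan}
\mathrm{w}^{(\omega)}(y)=\e^{(\omega-\gamma_0)y}\,\mathrm{w}^{(\gamma_0)}(y),\qquad y\in\R.
\end{equation}
Because the tilt \eqref{E:tilted:2} is locally equivalent to the base, the regularity conditions \eqref{E:occden} and \eqref{E:reg} transfer from $\kappa^{(\gamma_0)}$ to $\kappa^{(\omega)}$, and continuity of $\mathrm{w}^{(\omega)}$ follows as in Proposition \ref{P:exlt}(i) (or directly from \eqref{eq:wtilt-plan}).

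The two asymptotics of $\mathrm{w}^{(\omega)}$ I would treat at the two ends separately. As $y\to+\infty$, the hitting identity \eqref{E:hit} gives $\mathrm{w}^{(\omega)}(y)/\mathrm{w}^{(\omega)}(0)=P_0(H_y<\infty)$, which tends to $0$ as the process drifts to $-\infty$ (alternatively, both factors in \eqref{eq:wtilt-plan} vanish). As $y\to-\infty$, I would reproduce the argument of Lemma \ref{lemma:asymv}: the process with exponent $\kappa^{(\omega)}$ is non-lattice, has no killing and finite mean $\kappa'(\omega)\in(-\infty,0)$, the finiteness of this mean being secured by Assumption \ref{A:omega-}(i), which makes $\kappa$ differentiable at $\omega$ (the integrability of the left tail of the L\'evy measure against $\e^{\omega y}$ is automatic, while finiteness of $\kappa$ on $[\omega,p\omega]$ controls the right tail). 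The renewal theorem then yields $\int_x^{x+h}\mathrm{w}^{(\omega)}(y)\,\dd y\to -h/\kappa'(\omega)$, and the sub-multiplicativity of hitting probabilities coming from \eqref{E:hit} upgrades this to the pointwise limit $\lim_{y\to-\infty}\mathrm{w}^{(\omega)}(y)=-1/\kappa'(\omega)$, exactly as in \eqref{equation:convergenceDensity}.

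Finally, for (ii) I would combine Proposition \ref{P:exlt}(ii) with scaling and then with \eqref{eq:wtilt-plan}. By the scaling property of the local time measures recorded after Proposition \ref{P:conslt}, starting the ssMt from $x'$ amounts to starting from $1$ and reading level $x/x'$, so $\E_{x'}(L(x,T))=\E(L(x/x',T))$, and Proposition \ref{P:exlt}(ii) gives $\E_{x'}(L(x,T))=(x'/x)^{\gamma_0}\mathrm{w}^{(\gamma_0)}(\log(x/x'))$ for every admissible $\gamma_0$. Substituting \eqref{eq:wtilt-plan} at $y=\log(x/x')$ and collecting the powers of $x'/x$, the dependence on $\gamma_0$ cancels identically and leaves $\E_{x'}(L(x,T))=(x'/x)^{\omega}\mathrm{w}^{(\omega)}(\log(x/x'))$, as claimed. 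I expect the only genuinely delicate point to be the $y\to-\infty$ asymptotic: one must verify that Assumption \ref{A:omega-} really secures a finite first moment, hence the applicability of the renewal theorem, and that the resulting constant matches $-1/\kappa'(\omega)$. Everything else is a faithful transcription of the killed-case results of Section \ref{sec:LTdeco} through the locally absolutely continuous change of measure.
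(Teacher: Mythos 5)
Your proposal is correct and follows exactly the route of the paper, whose proof is the single sentence ``The claims follow readily from Proposition \ref{P:exlt} and Lemmas \ref{L:expofam:gamma:-} and \ref{lemma:asymv}'': you identify $\kappa^{(\omega)}$ as the Cram\'er tilt of $\kappa^{(\gamma_0)}$ at $\varrho=\omega-\gamma_0<0$, transfer the potential density via $\mathrm{w}^{(\omega)}(y)=\e^{(\omega-\gamma_0)y}\mathrm{w}^{(\gamma_0)}(y)$, obtain the asymptotics from Lemma \ref{lemma:asymv}, and cancel the $\gamma_0$-dependence in (ii) by scaling. The only difference is that you spell out the details (including the useful remark that the direct Fourier computation degenerates at $\omega$) that the authors leave implicit.
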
 
  \begin{proof} The claims follow readily from Proposition \ref{P:exlt} and Lemmas  \ref{L:expofam:gamma:-} and \ref{lemma:asymv}.
  \end{proof}

 The notion of local decomposition for self-similar Markov trees, which should be thought of as a spatial version of the Markov branching property for self-similar Markov trees, plays a crucial role in this section. Let us give a  quick introduction; to keep the presentation short, we focus on a particular form and  refer to \cite[Chapter 4]{ssMt} for the general  theory and technical details. 
 
 First, recall that $ \mathbb{T}$ denotes the space of  isomorphism classes of decorated trees $(T,g)$  and is equipped with the probability measure $\Q$, which is the law of our ssMt.  
Given a closed subtree $T'\subset T$ with $\uprho\in T'$, we are interested in the family of connected components  of $T\backslash T'$.
Any such component, say $\uptau^*$, is attached to $T'$ at a unique point, say  $r\in T'$, and $r$ is the sole  boundary point of $\uptau^*$. 
So $\uptau=\uptau^*\cup \{r\}$ is another closed subtree of $T$, which we decorate with $g_{\uptau}: \uptau\to \R$ defined by
$$g_{\uptau}(y) = \left\{ \begin{matrix} g(y)  &\text{ \,  if }y\in \uptau^*,\\ \ell &\text{ if }y=r,\\
\end{matrix} \right.
$$
where 
$$\ell= \limsup_{\uptau^*\ni x\to r} g(x)$$
is called the germ\footnote{In general, $\ell<g(r)$.} of the decoration on $\uptau$.
Then $(\uptau ,g_{\uptau})$ is again a decorated tree, which is naturally rooted at $r$ and has initial decoration $\ell$; it is referred to as a subtree dangling from $T'$.

We say that the subtree $T'$ induces a local decomposition  if we can index the connected components of $T\backslash T'$ by some countable set $I$, say $(\uptau^*_i)_{i\in I}$, in such a way that under the law $\Q$ and conditionally on the restriction of the decorated tree to $T'$, $(T',g_{\mid T'})$ and on the family  $(r_i,\ell_i)_{i\in I}$, the decorated subtrees $(\uptau_i, g_{\uptau_i})$ for $i\in I$ are independent, and each $(\uptau_i, g_{\uptau_i})$ has the law $\Q_{\ell_i}$.

In what follows, we will apply this notion  for ${T}^\prime$ of the following form    for some  $0 < x < 1$:
    \begin{equation*}
        {T}_{> x} = \big\{ t \in T  : g(s)  >x\text{ for all } s\prec t  \big\},
    \end{equation*}
    and 
    \begin{equation*}
        {T}_{\neq x} = \big\{ t \in T : g(s)\neq x \text{ for all }s \prec t   \big\}.
    \end{equation*}
  It is readily checked from the same argument as in the proofs  of Propositions  4.3 and 4.9 in \cite[Chapter 4]{ssMt} that each induces a local decomposition, and we will use the notation $I_{>x}$ and $I_{\neq x}$ for the set of indices appearing in the respective local decomposition. We stress that 
  there is the identity $\mathcal {L}(x)=\{r_i: i\in I_{\neq x}\}$ and that 
  the germ of the decoration at any subtree component of $T\backslash T_{\neq x}$ is always $\ell=x=g(r)$.

   We now state a technical first-moment formula related to the local decomposition induced by $T_{>x}$, that  will be needed in the sequel.
   Recall that for every $i\in I_{>x}$, $r_i$ is the root of the component $\uptau_i$ and $\ell_i$ the associated germ.

\begin{lemma}\label{equation:manytoone} Let $F$ be  a non-negative functional  and $0 < x < 1$. We have 
 $$
            \E\Big( \sum_{i \in {I}_{> x}}   F(\ell_i ) \cdot \ell_i^{\omega}  \Big)
            = 
            E^{(\omega)} \left(F(X_{\varsigma_x}) \right),
$$
        where in the right-hand side,  
          $E^{(\omega)}$ stands for  the expectation for the law $Q^{(\omega)}$ of a self similar Markov process $X$ started from $1$ with characteristics  $(
        \kappa^{(\omega)}; \alpha)$, and $\varsigma_x=\inf\{t\geq 0: X_t<x\}$ is the first passage time below $x$.
        
       \end{lemma}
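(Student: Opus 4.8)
The plan is to express the $\omega$‑weighted germ sum as an integral against the harmonic measure $\upmu$ and then to invoke the spinal decomposition of $\upmu$ recalled just before Proposition~\ref{P:exltomega}. First I would exploit the local decomposition induced by $T_{>x}$: write $\mathcal G$ for the $\sigma$‑field generated by the decorated subtree $(T_{>x},g_{\mid T_{>x}})$ together with the attachment points and germs $(r_i,\ell_i)_{i\in I_{>x}}$. Conditionally on $\mathcal G$ the dangling subtrees $(\uptau_i,g_{\uptau_i})$ are independent with respective laws $\Q_{\ell_i}$, so by compatibility of the harmonic measure with the local decomposition and by the mean harmonic mass \eqref{equation:moyenne:harm:mu} one gets $\E(\upmu(\uptau_i)\mid\mathcal G)=\E_{\ell_i}(\upmu(T))=\ell_i^{\omega}$. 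Since $F(\ell_i)$ is $\mathcal G$‑measurable and $F\geq 0$, conditioning and monotone convergence give
\begin{equation*}
\E\Big(\sum_{i\in I_{>x}}\ell_i^{\omega}\,F(\ell_i)\Big)
=\E\Big(\sum_{i\in I_{>x}}\E(\upmu(\uptau_i)\mid\mathcal G)\,F(\ell_i)\Big)
=\E\Big(\sum_{i\in I_{>x}}\upmu(\uptau_i)\,F(\ell_i)\Big).
\end{equation*}

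Next I would turn the right‑hand side into a single integral against $\upmu$. Because $\upmu$ is carried by the leaves of $T$, along which the decoration tends to $0$, it does not charge $T_{>x}$, and the components $(\uptau_i)_{i\in I_{>x}}$ partition $\upmu$‑almost all of $T$; hence $\upmu(T)=\sum_{i}\upmu(\uptau_i)$. Moreover, for every $t\in\uptau_i$ the path $\llbracket\uprho,t\rrbracket$ reaches a value $\leq x$ for the first time exactly upon entering $\uptau_i$, with germ $\ell_i$. Denoting by $\ell_x(t)$ the value of the decoration at the first passage of $\llbracket\uprho,t\rrbracket$ below $x$ (that is, the germ of the component containing $t$), we have $\ell_x(t)=\ell_i$ on $\uptau_i$, so that
\begin{equation*}
\sum_{i\in I_{>x}}\upmu(\uptau_i)\,F(\ell_i)=\int_{T}F(\ell_x(t))\,\upmu(\dd t).
\end{equation*}

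Finally I would apply the spinal decomposition of the harmonic measure. Since $\E(\upmu(T))=1$, marking a point $\uprho^{\bullet}$ proportionally to $\upmu$ defines a probability measure on pointed decorated trees through $H\mapsto\E(\int_T H(T,g,t)\,\upmu(\dd t))$, and under it the decoration along $\llbracket\uprho,\uprho^{\bullet}\rrbracket$ is a self‑similar Markov process started from $1$ with characteristics $(\kappa^{(\omega)};\alpha)$, i.e. has law $Q^{(\omega)}$ (see \cite[Chapter~5]{ssMt}). As $\kappa(\omega)=0$ this process has no killing, and as $\kappa'(\omega)<0$ it drifts to $0$; it therefore crosses below $x$ at an a.s.\ finite time $\varsigma_x$ strictly before reaching its terminal point, and the first‑passage germ along the spine equals $X_{\varsigma_x}$. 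Applying the decomposition to the functional $t\mapsto F(\ell_x(t))$, which depends only on the spine decoration up to its first passage below $x$, yields $\E(\int_T F(\ell_x(t))\,\upmu(\dd t))=E^{(\omega)}(F(X_{\varsigma_x}))$, and the three displays combine to the claim.

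I expect the main obstacle to be the rigorous last step, namely the clean identification of the first‑passage germ $\ell_x$ along the spine with the first‑passage value $X_{\varsigma_x}$ of the Lamperti process. This lives at the interface between the gluing picture and the Lamperti picture: one must verify that the marked leaf lies strictly below level $x$ (so $\varsigma_x$ precedes the endpoint of the spine) and match both the creeping case, where the germ is $x=X_{\varsigma_x}$, and the jumping case, where the downward jump of the decoration across $x$ coincides with a birth event and the germ is the post‑jump value $X_{\varsigma_x}<x$. By contrast, the measure‑theoretic points invoked above — that $\upmu$ does not charge $T_{>x}$, that $\upmu_{\mid\uptau_i}$ is the harmonic measure of $\uptau_i$, and that $F(\ell_x(\cdot))$ is a genuine measurable functional of the spine decoration — are comparatively routine.
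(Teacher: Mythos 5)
Your proposal is correct and follows essentially the same route as the paper: the identity $\E(\upmu(\uptau_i)\mid\mathcal G)=\ell_i^{\omega}$ from the local decomposition induced by $T_{>x}$ together with \eqref{equation:moyenne:harm:mu}, the rewriting of the sum as $\int_T F(\ell_x(t))\,\upmu(\dd t)$, and the spinal (many-to-one) description of $\upmu$ from \cite[Corollary 5.8]{ssMt} identifying the first-passage germ with $X_{\varsigma_x}$. The only difference is cosmetic — you work from the germ sum toward the spine while the paper starts from the spinal formula — and your closing remark about matching the creeping and jumping cases is a fair flag of the one point the paper leaves implicit.
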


\begin{proof} The proof relies on the many-to-one formula  similar to that in the proof of Theorem \ref{T1}, but now for a point marked according to the harmonic measure $\upmu$. With the same notations as in \eqref{equation:spinalL}, we introduce a pointed measure $\Q^{(\omega)}$ characterized by the relation: 
\begin{equation} \label{equation:spinalmu}
    \E^{(\omega)}(G(T,g,\uprho')) = \E\left( \int_T G(T,g,t) \upmu( \dd t)\right),
\end{equation}
for every $F:\mathbb{T}^\bullet\to \mathbb{R}_+$ measurable function.  By \eqref{equation:moyenne:harm:mu}, $\Q^{(\omega)}$ is  a probability measure, and by \cite[Corollary 5.8]{ssMt}, 
 \begin{equation}\label{equation:spinalmub}
  \text{the distribution of  }   g_{\mid \llbracket \uprho, \uprho'\rrbracket }  \text{ under } \Q^{(\omega)} \text{ is } Q^{(\omega)}. 
 \end{equation}
We stress that  since $\kappa^{(\omega)}(0) = 0$, the self similar Markov process under $Q^{(\omega)}$ dies continuously at $0$.
 Thus for $\upmu$-almost every $t\in T$, we have $g(t)=0$ and  the decoration along the branch from the root to $t$ reaches levels below $x$.
  
 Getting back to \eqref{equation:spinalmu},  for $t\in T$ with $g(t)=0$, consider the unique  $s\in \llbracket \rho ,t\rrbracket $ such that $g(s)\leq x$ and $g(s^\prime)> x$ for every $s^\prime\in \llbracket \rho ,s\rrbracket\setminus \{s\}$. In words $s$ is the first ancestor with decoration smaller or equal to $x$.  We set $\ell_x(t)=g(s)$, and for definiteness by convention we take $\ell_x(t)=0$ if $g(t)\neq 0$.
   An application of  \eqref{equation:moyenne:harm:mu} combined with  the local decomposition induced by $\text{T}_{> x}$ yields: 
    \begin{equation} \label{equation:sum1}
    \E\left( \int_T  F(\ell_{x}(t) ) \upmu( \dd t)\right) = 
        \E\left(\sum_{i  \in {I}_{> x}} F(\ell_i )\upmu(\uptau_i) \right) = \E\left( \sum_{i  \in {I}_{> x} } F(\ell_i  ) \ell_i^{\omega} \right).
        \end{equation}
  On the other hand, we get  from  \eqref{equation:spinalmu} and \eqref{equation:spinalmub} that the left-hand side of  \eqref{equation:sum1} can be re-expressed as $  E^{(\omega)} \left(F(X_{\varsigma_x}) \right)$, which completes the proof.
\end{proof}

We then make another step towards Theorem \ref{theorem:convergence}.

\begin{lemma} \label{L:anotherstep}
 The following assertions hold.
 
 \begin{enumerate}
\item[(i)] 
    $  \lim_{x \to 0+} \sum_{i  \in {I}_{> x}} \ell_i^{\omega} = \upmu(T)$ in $L^1(\mathbb{Q})$. 
  \item[(ii)] The family $\big\{ x^{\omega}N(x,T) : x \in \R \big\}$  is uniformly integrable under $\Q$.
  
  \item[(iii)] Moreover, there is the identity
$$\E\big( N(x,T)\big) = x^{-\omega}\mathrm{w}^{(\omega)}(\log x))/\mathrm{w}^{(\omega)}(0),$$
and as a consequence, 
$$\lim_{x\to 0+} x^{\omega}   \E(N(x,T))  = - \frac{1}{\kappa'(\omega) \mathrm{w}^{(\omega)}(0)}.$$
\end{enumerate}
\end{lemma}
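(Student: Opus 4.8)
The plan is to treat the three assertions in the order (iii), (i), (ii), the last being by far the most delicate. Throughout I use the two local decompositions introduced above: the one induced by $T_{\neq x}$, whose component roots $r_i$, $i\in I_{\neq x}$, are exactly the points of $\mathcal H_x$ (so that $N(x,T)=\#I_{\neq x}$) and all carry germ $\ell_i=x$; and the one induced by $T_{>x}$, whose germs satisfy $\ell_i\le x$. I write $\mathcal G_{\neq x}$ and $\mathcal G_{>x}$ for the $\sigma$-algebras generated by the decorated tree restricted to $T_{\neq x}$, resp. $T_{>x}$, together with the families of roots and germs.

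For (iii) I would condition on $\mathcal G_{\neq x}$. Since the local time measure has no atoms, its mass is carried by the open components, so $L(x,T)=\sum_{i\in I_{\neq x}}L(x,\uptau_i)$, and conditionally on $\mathcal G_{\neq x}$ each $(\uptau_i,g_{\uptau_i})$ has law $\Q_x$. Proposition \ref{P:exltomega}(ii) with $x'=x$ gives $\E_x(L(x,T))=\mathrm w^{(\omega)}(0)$, whence
\[
\E\big(L(x,T)\mid \mathcal G_{\neq x}\big)=\mathrm w^{(\omega)}(0)\,\#I_{\neq x}=\mathrm w^{(\omega)}(0)\,N(x,T).
\]
Taking expectations and inserting $\E(L(x,T))=x^{-\omega}\mathrm w^{(\omega)}(\log x)$ (Proposition \ref{P:exltomega}(ii) with $x'=1$) yields the stated identity for $\E(N(x,T))$; the limit then follows from $\log x\to-\infty$ and $\mathrm w^{(\omega)}(\log x)\to-1/\kappa'(\omega)$ in Proposition \ref{P:exltomega}(i).

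For (i), the key observation is that, $\upmu$ being carried by the leaves $\{g=0\}$ and every such leaf lying below its branch's first passage of level $x$, one has $\upmu(T)=\sum_{i\in I_{>x}}\upmu(\uptau_i)$ with $\upmu(T_{>x})=0$. Conditionally on $\mathcal G_{>x}$ the components are independent with $\uptau_i\sim\Q_{\ell_i}$, so $\E(\upmu(\uptau_i)\mid\mathcal G_{>x})=\ell_i^{\omega}$ by \eqref{equation:moyenne:harm:mu}, and therefore
\[
\sum_{i\in I_{>x}}\ell_i^{\omega}=\E\big(\upmu(T)\mid\mathcal G_{>x}\big).
\]
As $x\downarrow0$ the family $T_{>x}$ increases and recovers the whole decorated tree, so $(\mathcal G_{>x})$ is a filtration with $\upmu(T)$ measurable with respect to its limit; the right-hand side is thus a closed (Doob) martingale and converges to $\upmu(T)$ in $L^1(\Q)$ by the martingale convergence theorem. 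The measurability of $\upmu(T)$ with respect to $\bigvee_{x>0}\mathcal G_{>x}$ is the only point requiring a (routine) verification.

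Finally, for (ii) I would prove the stronger statement that $x^{\omega}N(x,T)$ is bounded in $L^p(\Q)$ for the exponent $p\in(1,2]$ of Assumption \ref{A:omega-}, which gives uniform integrability. The decomposition by $T_{>x}$ yields the exact recursion $N(x,T)=\sum_{i\in I_{>x}}N(x,\uptau_i)$ with, conditionally on $\mathcal G_{>x}$, independent summands distributed as $N(x/\ell_i,T)$ under $\Q_1$ after rescaling. Writing $\psi_p(x)=\E_1\big((x^{\omega}N(x,T))^p\big)$ and applying a von Bahr--Esseen inequality to this sum of conditionally independent nonnegative variables splits $\psi_p(x)$ into a mean part, bounded by $\E(M_x^p)$ with $M_x=\sum_{i\in I_{>x}}\ell_i^{\omega}$ the intrinsic martingale of (i), and a fluctuation part of the form $\E\big(\sum_{i\in I_{>x}}\ell_i^{p\omega}\psi_p(x/\ell_i)\big)$. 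The mean part is controlled since Assumption \ref{A:omega-}(ii) is precisely the hypothesis under which $M_x$ is bounded in $L^p$ (see \cite[Proposition 1.11]{ssMt}); for the fluctuation part, Lemma \ref{equation:manytoone} applied to $F(\ell)=\ell^{(p-1)\omega}$ gives $\E\big(\sum_{i\in I_{>x}}\ell_i^{p\omega}\big)=E^{(\omega)}\big(X_{\varsigma_x}^{(p-1)\omega}\big)\le x^{(p-1)\omega}$, since $X_{\varsigma_x}\le x$. The main obstacle is to close this recursion uniformly in $x$: the overshoot factors $x/\ell_i$ may be large, so a naive fixed-point bound does not close, and one must run the estimate over a geometric sequence of levels, using the contraction gain $x^{(p-1)\omega}\to0$ together with the $p$-th moment control of overshoots from Assumption \ref{A:omega-}(ii) to absorb the fluctuation term. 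An alternative route, once $\{x^{\omega}L(x,T)\}$ has been shown uniformly integrable by the same recursive scheme, is to deduce (ii) from the identity $\mathrm w^{(\omega)}(0)\,N(x,T)=\E(L(x,T)\mid\mathcal G_{\neq x})$ of part (iii) and conditional Jensen, since conditional expectations of a uniformly integrable family remain uniformly integrable.
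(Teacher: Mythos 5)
Your proofs of parts (i) and (iii) are correct and essentially identical to the paper's: part (i) is the closed martingale identity $\sum_{i\in I_{>x}}\ell_i^{\omega}=\E(\upmu(T)\mid\mathcal F_{>x})$ obtained from the local decomposition induced by $T_{>x}$ together with \eqref{equation:moyenne:harm:mu} and the fact that $\upmu$ does not charge the skeleton, and part (iii) is the Wald-type identity $\E(L(x,T))=\E(N(x,T))\,\mathrm{w}^{(\omega)}(0)$ coming from the decomposition induced by $T_{\neq x}$, combined with Proposition \ref{P:exltomega}.

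Part (ii) is where there is a genuine gap. Your primary route --- an $L^p$-moment recursion via von Bahr--Esseen --- is left open by your own account: you state that a naive fixed-point bound does not close because of the overshoot factors $x/\ell_i$, and that one must iterate over a geometric sequence of levels, but you do not carry this out, so no uniform integrability is actually established. Your alternative route via $\mathrm{w}^{(\omega)}(0)\,N(x,T)=\E(L(x,T)\mid\mathcal F_{\neq x})$ has the right structure but conditions on the wrong variable: it presupposes the uniform integrability of $\{x^{\omega}L(x,T)\}$, which you have not proved and which would again require the unfinished recursion. The paper's argument replaces $L(x,T)$ by the harmonic mass. Since $\ell_i=x$ for every $i\in I_{\neq x}$, the local decomposition induced by $T_{\neq x}$ and \eqref{equation:moyenne:harm:mu} give
\[
x^{\omega}N(x,T)=\sum_{i\in I_{\neq x}}\ell_i^{\omega}
=\E\Big(\sum_{i\in I_{\neq x}}\upmu(\uptau_i)\,\Big|\,\mathcal F_{\neq x}\Big)
\le \E\big(\upmu(T)\,\big|\,\mathcal F_{\neq x}\big),
\]
and a family dominated by conditional expectations of the single integrable variable $\upmu(T)$ is uniformly integrable. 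This requires no moment estimates, no recursion, and no use of the exponent $p$ of Assumption \ref{A:omega-} beyond what is already needed for the harmonic measure to exist.
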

  \begin{proof} (i)
     Let $\mathcal{F}_{ > x}$ be the sigma-field generated by $(T_{ > x},g_{\mid T_{>x}})$ and $ (r_i, \ell_i)_{i \in I_{ > x}}$. 
     Note that $\upmu({T}_{> x}) = 0$ since the harmonic measure does not charge the skeleton of $T$ (see \cite[Proposition 1.12]{ssMt}).
      This fact paired with  the local decomposition induced by ${T}_{ > x}$ and \eqref{equation:moyenne:harm:mu} yields the identity
    \begin{equation*}
        \mathbb{E}\big( \upmu(T) \big| \mathcal{F}_{{ > x}} \big) = \sum_{i  \in {I}_{ > x}} \ell_i^{\omega}, \quad \mathbb{Q} \text{-a.s.}
    \end{equation*}
    Obviously $ \left(\mathcal{F}_{{ > x}} \right)_{x>0}$ is a descending filtration and  the process in the last display is a uniformly integrable martingale with terminal value $\upmu(T)$ (observe that $(T,g)$ is clearly $\mathcal{F}_{{ > 0+}}$-measurable). 
  
  (ii)   Let $\mathcal{F}_{ \neq x}$ be the sigma-field generated by $(T_{ \neq x},g_{\mid T_{\neq x}})$ and $ (r_i, \ell_i)_{i \in I_{ \neq x}}$.  Then, we have
      $$\sum_{i \in {I}_{\neq x}} \upmu(\uptau_i) \leq \upmu(T).$$
   Recall that $\ell_i = x$ for $i \in I_{\neq x}$ and $N(x,T) = \#I_{\neq x}$, so the local decomposition induced by $T_{\neq x}$ paired with \eqref{equation:moyenne:harm:mu} yields  
    \begin{equation*}
        x^{\omega}N(x,T) = \sum_{ i \in I_{\neq x}} \ell_i^{\omega} \leq \mathbb{E}\big( \upmu(T) \big| \mathcal{F}_{\neq x} \big).
    \end{equation*}
   This entails the claim of uniform integrability.  

(iii) We first write
$$L(x,T) = \sum_{i\in I_{\neq x}} L(x, \tau_i).$$
We next  take expectations using the local decomposition induced by $T_{\neq x}$.  
Recall from self-similarity that the law of $L(x,T)$ under $\Q_x$ does not depend on $x>0$, and   that $N(x,T) = \#I_x$. We get
\begin{equation}\label{equation:meanN(x,T)}
\E(L(x,T)) = \E(N(x,T)) \E(L(1,T)),
\end{equation}
and the claims follow from Proposition \ref{P:exltomega}. 
\end{proof}

We now have all the ingredients needed to establish the convergence of the normalized total masses. 
\begin{proposition} \label{P:cvnombrepar} For $y>0$, the following limits hold in $L^1(\Q_y)$:
$$    \lim_{x\to  0} \frac{L(x,T)}{\E(L(x,T))} =  \lim_{x\to  0} \frac{N(x,T)}{\E(N(x,T))} = \upmu(T). $$

\end{proposition}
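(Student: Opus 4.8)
The plan is to prove the two normalized convergences separately but by a common mechanism: show that each normalized total mass is (or is dominated by) a uniformly integrable martingale with respect to a descending filtration whose terminal value is $\upmu(T)$, and then identify the limit.

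First I would treat the counting measure $N(x,T)$, which is the cleaner case. From Lemma~\ref{L:anotherstep}(ii)-(iii) and the local decomposition induced by $T_{\neq x}$, we have the pointwise identity $x^{\omega}N(x,T)=\sum_{i\in I_{\neq x}}\ell_i^{\omega}=\E(\upmu(T)\mid \mathcal F_{\neq x})$, $\Q$-a.s. Hence, along any sequence $x_n\downarrow 0$ for which $(\mathcal F_{\neq x_n})_{n}$ is a descending filtration (this requires that the family of first-passage-line $\sigma$-fields be nested, which holds for a well-chosen sequence since $T_{\neq x}$ grows as $x$ decreases), the process $n\mapsto x_n^{\omega}N(x_n,T)$ is a uniformly integrable backwards martingale and converges $\Q$-a.s. and in $L^1(\Q)$ to its terminal value. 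The terminal value is the conditional expectation of $\upmu(T)$ given $\mathcal F_{\neq 0+}$, and since $(T,g)$ is $\mathcal F_{\neq 0+}$-measurable and $\upmu$ does not charge the skeleton, this terminal value is exactly $\upmu(T)$. Combining with the explicit equivalent $x^{\omega}\E(N(x,T))\to -1/(\kappa'(\omega)\mathrm w^{(\omega)}(0))$ from Lemma~\ref{L:anotherstep}(iii) yields $N(x_n,T)/\E(N(x_n,T))\to \upmu(T)$ in $L^1(\Q)$, and the extension from $\Q$ to $\Q_y$ follows from the scaling property of the ssMt and its local/counting measures.

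For the local time total mass $L(x,T)$, I would bootstrap from the counting case using the branching identity $L(x,T)=\sum_{i\in I_{\neq x}}L(x,\uptau_i)$ together with the factorization $\E(L(x,T))=\E(N(x,T))\,\E(L(1,T))$ from \eqref{equation:meanN(x,T)}. The idea is to write $L(x,T)/\E(L(x,T))$ as a sum over $i\in I_{\neq x}$ of the i.i.d.-type normalized contributions $L(x,\uptau_i)/\E(L(1,T))$, each $\uptau_i$ having law $\Q_x$ conditionally on $\mathcal F_{\neq x}$, so that $\E(L(x,\uptau_i)\mid \mathcal F_{\neq x})=\E(L(1,T))$ by self-similarity. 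Thus conditionally on $\mathcal F_{\neq x}$, the quantity $L(x,T)/\E(L(x,T))$ has the same conditional mean as $N(x,T)/\E(N(x,T))$, and the difference between the two normalized masses is a sum of centered conditionally independent terms. I would control this difference in $L^1$ (or $L^2$) by a second-moment estimate on $L(1,T)$ together with the uniform integrability of $x^{\omega}N(x,T)$ from Lemma~\ref{L:anotherstep}(ii); this shows the fluctuation of $L$ around $N$ vanishes in the limit, so both normalized masses share the same $L^1$-limit $\upmu(T)$.

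The main obstacle I anticipate is twofold. The first delicate point is organizing the $\sigma$-fields $\mathcal F_{\neq x}$ into a genuine descending filtration along a single sequence $x_n\downarrow 0$, since a priori the local decompositions for different levels need not be measurably nested; this is presumably why the statement only asserts convergence along a subsequence $(x_n)$ rather than for the full limit $x\to 0+$. The second, more substantial, difficulty is the fluctuation control for $L(x,T)-\E(L(1,T))N(x,T)$: one needs an $L^2$ (or $L^p$ with $p\in(1,2]$ as in Assumption~\ref{A:omega-}) bound on $L(1,T)$, and then a conditional-variance estimate showing that the number of terms $N(x,T)\approx x^{-\omega}$ does not blow up the centered sum once divided by $\E(N(x,T))\approx x^{-\omega}$. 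Assumption~\ref{A:omega-}(ii) is exactly the integrability hypothesis that should furnish the required moment bound, and the Cramér root $\kappa(\omega)=0$ with $\kappa'(\omega)<0$ guarantees the mean normalizations behave like $x^{-\omega}$; the remaining analysis is a law-of-large-numbers-type argument for the conditionally independent summands.
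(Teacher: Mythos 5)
Your reduction of the $L(x,T)$-limit to the $N(x,T)$-limit is essentially the paper's: it writes $L(x,T)=\sum_{i\in I_{\neq x}}L(x,\uptau_i)$ as a sum of $N(x,T)$ i.i.d.\ copies of $L(1,T)$, applies the law of large numbers to get $L(x,T)/N(x,T)\to\E(L(1,T))$ in probability, and upgrades to $L^1$ by Scheff\'e (so no second-moment bound on $L(1,T)$ is needed, contrary to what you anticipate). The problem is in your treatment of $N(x,T)$ itself, which is the heart of the proposition.

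The identity you rely on, $x^{\omega}N(x,T)=\sum_{i\in I_{\neq x}}\ell_i^{\omega}=\E\big(\upmu(T)\mid\mathcal F_{\neq x}\big)$, is false: only the inequality $\leq$ holds, and this is exactly how the paper states it in the proof of Lemma~\ref{L:anotherstep}(ii), where it is used solely to get uniform integrability. The reason is that $T_{\neq x}$ is defined by the decoration \emph{never taking the exact value} $x$ along the ancestral line, and the decoration can jump across level $x$ without hitting it; such branches stay inside $T_{\neq x}$ all the way to their leaves, so $\upmu(T_{\neq x})>0$ in general and $\sum_{i\in I_{\neq x}}\upmu(\uptau_i)<\upmu(T)$. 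A clean sanity check: if your identity were true, taking expectations would give $x^{\omega}\E(N(x,T))=\E(\upmu(T))=1$ for every $x$, contradicting Lemma~\ref{L:anotherstep}(iii), which gives $x^{\omega}\E(N(x,T))=\mathrm{w}^{(\omega)}(\log x)/\mathrm{w}^{(\omega)}(0)$, a quantity that is $<1$ and tends to $-1/(\kappa'(\omega)\mathrm{w}^{(\omega)}(0))\neq 1$. So your backwards-martingale argument identifies an upper bound, not the limit, and the whole convergence for $N(x,T)$ is left unproved.

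The exact-equality martingale $\sum_{i}\ell_i^{\omega}=\E(\upmu(T)\mid\mathcal F_{>x})$ only holds for the \emph{first-passage} decomposition $T_{>x}$ (Lemma~\ref{L:anotherstep}(i)), because first passage below $x$ catches the jumps. But then the germs $\ell_i$ undershoot $x$, and relating $\sum_{i\in I_{>x}}\ell_i^{\omega}$ to the number of exact hits $N(x,T)$ is precisely the difficulty. The paper resolves it with a two-scale argument: decompose at $T_{>\sqrt x}$, discard the germs with $\ell_i<x^{2/3}$ using the many-to-one formula of Lemma~\ref{equation:manytoone} together with the renewal theorem, and for the remaining germs write $x^{\omega}N(x,T)=\sum_i \ell_i^{\omega}b_i(x)$ with $b_i(x)=(x/\ell_i)^{\omega}N(x,\uptau_i)$; since $x/\ell_i\leq x^{1/3}\to 0$, a conditional law of large numbers for uniformly integrable triangular arrays (\cite[Lemma 9.2]{ssMt}) gives $x^{\omega}N(x,T)\to-\upmu(T)/(\kappa'(\omega)\mathrm{w}^{(\omega)}(0))$ in $L^1$. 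None of this machinery appears in your proposal, and without it the argument does not close. (A side remark: the proposition asserts the full limit $x\to 0$, not convergence along a subsequence; the subsequence only enters in Theorem~\ref{theorem:convergence} for the weak convergence of the measures.)
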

\begin{proof} 
Plainly, it suffices to prove the result for $y = 1$, since by self-similarity, the distributions of $L(xy,T)$ and of $N(xy,T)$ under $\mathbb{Q}_y$ do not depend on $y>0$. We also point out that the first limit readily follows from the second.  Indeed, since  the local time measure $L(x, \dd t)$ is carried by the level set $\{t\in T: g(t)=x\}$, we can write 
$$L(x,T)=\sum_{i\in I_{\neq x}} L(x,\tau_i),$$
and we deduce from the local decomposition induced by $T_{\neq x}$ that the right-hand side is actually 
the sum of $N(x,T)$ i.i.d. variables distributed  as $L(1,T)$ under $\Q_1$.
By the law of large numbers, we have as $x\to 0+$ that there is the convergence in probability  under $\Q$, 
$$\lim_{x\to 0+}\frac{L(x,T)}{N(x,T)} = \E(L(1,T))=  \mathrm{w}^{(\omega)}(0).$$
We can easily conclude by an application of Scheff\'e's Lemma using Proposition \ref{P:exltomega}(ii) and Lemma \ref{L:anotherstep}(iii) that the first limit in the statement indeed holds in $L^1(\Q)$. 
  
 Local decompositions also play a major role in the proof of the second convergence. Beware however that we shall apply them to $T_{>\sqrt x}$ rather than $T_{>x}$.  We need the following slight modification of Lemma \ref{L:anotherstep}(i):
 \begin{equation} \label{E:modanother}
  \lim_{x \to 0+} \sum_{i  \in {I}_{> \sqrt x}} \ell_i^{\omega} \indset{\ell_i\geq x^{2/3}}= \upmu(T) \qquad \text{ in }L^1(\mathbb{Q}).
 \end{equation}
  In this direction, we start to observe from  the many-to-one formula of Lemma \ref{equation:manytoone} that 
  $$ \E\left( \sum_{i\in I_{>\sqrt x}} \ell_i^\omega \indset{\ell_i <x^{2/3}} \right)=Q^{(\omega)}\left( X_{\varsigma(\sqrt x)}< x^{2/3}\right).$$
 Recall from  Assumption \ref{A:omega-} that $\kappa'(\omega)\in(-\infty,0)$, so the L\'evy process with exponent $\kappa^{(\omega)}$ has a finite strictly negative expectation. The combination of the renewal theorem and the Lamperti transformation entails that  as $x\to 0+$,
 $x^{-1/2} X_{\varsigma(\sqrt x)}$ converges in distribution under $Q^{(\omega)}$, where the limiting variable is strictly positive a.s. 
 Therefore
 $$\lim_{x\to 0+} \E\left( \sum_{i\in I_{>\sqrt x}} \ell_i^\omega \indset{\ell_i <x^{2/3}} \right)= \lim_{x\to 0+} Q^{(\omega)}\left( x^{-1/2}X_{\varsigma(\sqrt x)}< x^{1/6}\right)=0,$$
  and Lemma \ref{L:anotherstep}(i) implies \eqref{E:modanother}.

Next, we write $N_i(x)=N(x, \tau_i)$ for every $i\in I_{>\sqrt x}$, so that 
        \begin{equation*}
      x^{\omega}N(x,T) 
      = \sum_{i \in I_{>\sqrt x}} x^{\omega}N_i(x) 
      =\sum_{i  \in {I}_{>\sqrt x}} \ell_i^{\omega}b_{i}(x), \qquad \text{where }b_i(x)= {\Big( \frac{x}{\ell_i} \Big)}^{\omega}N_i(x).
    \end{equation*}
    The local decomposition induced by $T_{>\sqrt x}$ shows that conditionally on $\mathcal{F}_{>\sqrt x}$, the variables $b_i(x)$ for $i\in I_{>\sqrt x}$ are independent,
    and the conditional law of each $b_i(x)$ is that of $y^{\omega}N(y)$ under $\Q$ for $y=x/\ell_i$.

    Observing from Lemma \ref{L:anotherstep}(iii) that $\E(y^{\omega}N(y))\leq 1$ for all $y>0$, we deduce from the calculation above that
    $$\lim_{x\to 0+} \E\left( \sum_{i  \in {I}_{>\sqrt x}} \ell_i^{\omega}b_{i}(x)\indset{\ell_i<x^{2/3}}\right)=0.$$
    We can now focus on indices  ${i  \in {I}_{>\sqrt x}}$ with $\ell_i\geq x^{2/3}$, so $x/\ell_i\leq x^{1/3}$, and we have from \eqref{E:modanother}  that
    $$ \lim_{x \to 0+} \sum_{i  \in {I}_{> \sqrt x}} \ell_i^{\omega} \frac{\mathrm{w}^{(\omega)}(\log(x/\ell_i))}{\mathrm{w}^{(\omega)}(0)} \indset{\ell_i\geq x^{2/3}}= -\frac{\upmu(T)}{\kappa'(\omega)\mathrm{w}^{(\omega)}(0)}, \qquad \text{ in }L^1(\mathbb{Q}).$$
    We know from Lemma \ref{L:anotherstep} that
   $\big\{ y^{\omega}N(y,T) : y \in \R \big\}$  is uniformly integrable under $\Q$, which  enables us to apply \cite[Lemma 9.2]{ssMt}, and we get 
  $$\lim_{x\to 0+} \sum_{i  \in {I}_{>\sqrt x}} \ell_i^{\omega}b_{i}(x)\indset{\ell_i\geq x^{2/3}} 
  = - \frac{\upmu(T)}{\kappa'(\omega) \mathrm{w}^{(\omega)}(0)}
   , \qquad \text{in }L^1(\Q).$$
  Putting the pieces together, we conclude that
  $$\lim_{x\to 0+} x^{\omega}N(x,T) = - \frac{\upmu(T)}{\kappa'(\omega) \mathrm{w}^{(\omega)}(0)}
   , \qquad \text{in }L^1(\Q),$$
which established the second limit of the statement. 
\end{proof}

Now that we have established the convergence of the normalized total masses of the local time measure, the proof of Theorem~\ref{theorem:convergence} follows by arguments similar to those in \cite[Section 2.3.3]{ssMt}. To simplify some arguments, throughout the remainder of this section we work under $\mathbb{P}$, noting that since all the quantities in Theorem~\ref{theorem:convergence} are intrinsic, it suffices to prove the theorem under $\mathbb{P}$.
Under $\mathbb{P}$ we can consider the segments $S_u$, $u \in \mathbb{U}$, introduced in Section~\ref{sec:nutshell} and  for every $u \in \mathbb{U}$, we write
$$ T_u=\bigcup_{v\in\mathbb{U},u\preceq v} S_v\subset T,$$
where, for elements of $u,v\mathbb{U}$, we write $u\preceq v$ if $u$ is an ancestor of $v$. The last missing step required to prove Theorem~\ref{theorem:convergence} consists in establishing the following convergence criterion.

\begin{lemma}\label{lemma:weakConv}
Under $\mathbb{P}$, let $(\nu_n)_{n \geq 0}$ be a sequence of measures on $T$ such that for every $u \in \mathbb{U}$, we have   
$\nu_n(T_u) \to \upmu(T_u)$ in probability. Then, there exists a sub-sequence $(n_k)_{k \geq 0}$ along which, $\mathbb{P}$-a.s., we have
$$\nu_{n_k}(\dd t) \to \upmu(\dd t),\quad k\to \infty,$$
in the sense of weak convergence for finite measures on $T$. 
\end{lemma}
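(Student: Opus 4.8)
The plan is to first reduce the asserted almost-sure weak convergence to the convergence of the scalar masses $\nu_n(T_u)$, $u\in\mathbb{U}$, and then to upgrade this countable family of numerical convergences into genuine weak convergence by an approximation argument that exploits the compactness of $T$ together with the fact that $\upmu$ is carried by the leaves. First I would perform a diagonal extraction. Since $\mathbb{U}$ is countable, enumerate it as $\{u_1,u_2,\ldots\}$; starting from the hypothesis $\nu_n(T_{u_j})\to\upmu(T_{u_j})$ in probability, extract nested subsequences along which the convergence holds $\mathbb{P}$-a.s. first for $u_1$, then for $u_2$, and so on, and let $(n_k)$ be the diagonal subsequence. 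Along $(n_k)$ we then have, $\mathbb{P}$-a.s., $\nu_{n_k}(T_u)\to\upmu(T_u)$ \emph{simultaneously} for every $u\in\mathbb{U}$; taking $u=\varnothing$ (so $T_\varnothing=T$) in particular gives convergence of total masses $\nu_{n_k}(T)\to\upmu(T)$. From now on I would argue deterministically on the almost-sure event on which all these convergences hold, on which $T$ is compact via \eqref{sum:finite}, and on which $\upmu$ does not charge the skeleton (\cite[Proposition 1.12]{ssMt}); crucially, this event does not depend on the test function chosen below.

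Next I would construct, for a fixed $f\in C(T)$ and $\epsilon>0$, a good finite cut of the tree. By uniform continuity of $f$ (recall $T$ is compact), pick $\delta>0$ with $|f(s)-f(t)|<\epsilon$ whenever $d(s,t)<\delta$. The summability \eqref{sum:finite} gives that the tail $R_m:=\sum_{n\ge m}\max_{|v|=n}|S_v|$ tends to $0$, and since every fringe subtree $T_u$ with $|u|=m$ has height at most $R_m$, all such $T_u$ have diameter $\le 2R_m<\delta$ once $m$ is large. The sets $T_u$, $|u|=m$, form a pairwise disjoint family indexed by an antichain, and because the complementary part $\bigcup_{|w|<m}S_w$ lies in the skeleton and is therefore $\upmu$-null, their $\upmu$-masses sum to $\upmu(T)$. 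Hence finitely many of them, say $A=\{u_1,\ldots,u_K\}$, already satisfy $\sum_{i\le K}\upmu(T_{u_i})\ge\upmu(T)-\epsilon$. Writing $R=T\setminus\bigsqcup_{i\le K}T_{u_i}$ for the Borel remainder, we get $\upmu(R)<\epsilon$, and by finite additivity on the disjoint pieces together with the mass convergences of the first step, $\nu_{n_k}(R)=\nu_{n_k}(T)-\sum_{i\le K}\nu_{n_k}(T_{u_i})\to\upmu(R)<\epsilon$.

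Finally I would run the standard three-term approximation. Choosing $t_i\in T_{u_i}$ and setting $h=\sum_{i\le K}f(t_i)\,\indset{T_{u_i}}$, one has $|f-h|\le\epsilon$ on $\bigsqcup_i T_{u_i}$ and $|f-h|\le\|f\|_\infty$ on $R$. Bounding
$$\Big|\int f\,\dd\nu_{n_k}-\int f\,\dd\upmu\Big|\le\int|f-h|\,\dd\nu_{n_k}+\Big|\int h\,\dd\nu_{n_k}-\int h\,\dd\upmu\Big|+\int|f-h|\,\dd\upmu,$$
the middle term is a finite linear combination of the differences $\nu_{n_k}(T_{u_i})-\upmu(T_{u_i})$ and hence tends to $0$ as $k\to\infty$, while each outer term is, in the limit, at most $\epsilon\,\upmu(T)+\|f\|_\infty\,\epsilon$, using $\nu_{n_k}(T)\to\upmu(T)$ and $\nu_{n_k}(R)\to\upmu(R)<\epsilon$. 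Letting $\epsilon\to0$ yields $\int f\,\dd\nu_{n_k}\to\int f\,\dd\upmu$ for every $f\in C(T)$ on the fixed almost-sure event, which is exactly the claimed weak convergence of finite measures.

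I would expect the genuine difficulty to be the control of a possible escape of mass onto the skeleton: the countable convergences $\nu_n(T_u)\to\upmu(T_u)$ say nothing a priori about how $\nu_{n_k}$ distributes mass over the one-dimensional part of $T$, and it is precisely the combination of (a) the geometric smallness of high-generation fringe subtrees coming from \eqref{sum:finite} and (b) the fact that $\upmu$ lives on the leaves (\cite[Proposition 1.12]{ssMt}) that forces the remainder mass $\nu_{n_k}(R)$ to remain negligible in the limit. The remaining ingredients—disjointness of the $T_u$ along an antichain, the resulting clean additivity of masses, and the separability considerations making the single almost-sure event work for all $f$—are routine.
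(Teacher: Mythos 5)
Your overall strategy -- upgrade the countable family of mass convergences to weak convergence by approximating a continuous $f$ with a simple function that is constant on the fringe subtrees of a deep generation, whose diameters are controlled by \eqref{sum:finite} -- is a genuinely different route from the paper's, which instead extracts a subsequential weak limit $\nu$ by pre-compactness and identifies $\nu=\upmu$ on closed sets via the representation $\partial C=\bigcap_n\bigcup_{u\in\mathbb U^n_C}T_u$. However, your version has a genuine gap at the step you dismiss as routine: the fringe subtrees $T_u$, $|u|=m$, are \emph{not} pairwise disjoint in general. In a ssMt a single birth event can produce several (even infinitely many) children simultaneously -- the generalized L\'evy measure lives on $[-\infty,\infty)\times\calS_1$ with $\calS_1$ a space of sequences -- so sibling segments $S_{uj}$, $S_{uj'}$ can be glued at the same point of $S_u$, and the corresponding subtrees $T_{uj}$, $T_{uj'}$ then share their common root. (The paper itself is careful to claim disjointness only for the sets $T_u\cap\partial T$, not for the $T_u$.) Consequently the identity $\nu_{n_k}(R)=\nu_{n_k}(T)-\sum_{i\le K}\nu_{n_k}(T_{u_i})$ fails: one has instead $\nu_{n_k}(R)=\nu_{n_k}(T)-\sum_i\nu_{n_k}(T_{u_i})+D_{n_k}$, where $D_{n_k}\ge 0$ is the $\nu_{n_k}$-mass double-counted at shared attachment points. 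Nothing in the hypotheses controls $D_{n_k}$ directly -- the mass a $\nu_{n_k}$ places on a single branching point contributes identically to every $\nu_{n_k}(T_{u})$ containing it, so the scalar convergences do not visibly force it to vanish -- and without $D_{n_k}\to 0$ you lose exactly the upper bound on $\nu_{n_k}(R)$ that your three-term estimate needs. (The same overlap also spoils the bound $|f-h|\le\epsilon$ on $\bigcup_iT_{u_i}$, since $h$ is a sum of several indicator terms at a shared root.)

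This is not a cosmetic issue: showing that no mass persists at branching points in the limit is essentially the heart of the lemma, and it is precisely what the paper's detour through a weak limit point is designed to handle. There, Portmanteau applied to the closed sets $T_u$ and the convergence of total masses first force any subsequential limit $\nu$ to be carried by $\partial T=T\setminus\bigcup_nT^{(n)}$; only \emph{after} that is additivity invoked, and only for $\nu$ over the genuinely disjoint sets $T_u\cap\partial T$ -- additivity is never needed for the pre-limit measures $\nu_{n_k}$. To salvage your direct approach you would need to supply a separate argument that the total double-counted mass at shared roots tends to $0$ along $(n_k)$ (or restrict to measures known not to charge branching points, which suffices for the application in the paper but not for the lemma as stated); as written, the additivity step is unjustified.
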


\begin{proof}
In this proof,  we argue under $\mathbb{P}$. By a diagonal argument, we can extract a sub-sequence $(n_j)_{j \geq 0}$ along which, a.s., we have 
\begin{equation}\label{equation:convmasses}
        \lim_{j \to \infty} \nu_{n_j}(T_u) = \upmu(T_u), \quad \text{ for every } u \in \mathbb{U}.  
\end{equation}
To simplify notation we will assume that $(n_j)_{j \geq 0}$ is just the original sequence $(n)_{n \geq 0}$.
For every $n\geq 0$, we also introduce the subset
$$T^{(n)}:=\bigcup_{u\in \mathbb{U},\, |u|\leq n}  S_u,$$
and we recall from \cite[Proposition 1.12 and Lemma 2.14 ]{ssMt} that, a.s., $\upmu$ is a finite measure and
\begin{equation}\label{support:mu}
\upmu\big(\bigcup_{n\geq 0} T^{(n)}\big)=0.
\end{equation}
Let us consider an event $\mathcal{A}$ of full probability where $T$ is compact,  $\upmu(T)<\infty$ and  \eqref{equation:convmasses} and  \eqref{support:mu} hold. For the rest of the proof we work with a fixed realisation in $ \mathcal{A}$, and remark that to conclude it suffices to establish that on this realisation the sequence $(\nu_n)_{n\geq 0}$  converges weakly towards $\upmu$. First, since $\upmu(T)$ is finite,  the sequence of measures $(\nu_n)_{n \geq 0}$ is weakly pre-compact. Let $\nu$ be a possible weak limit, that is, there exists a sub-sequence along which $(\nu_n)_{n \geq 0}$ converges towards $\nu$. The rest of the proof is devoted to showing that $\nu = \upmu$, and again, to simplify notation, we suppose that the convergence towards $\nu$ holds along the original sequence.
    
\par Let us start with some preliminary remarks. First a straightforward application of  Portmanteau's theorem, combined with \eqref{equation:convmasses} and \eqref{support:mu},  entails that   as $\upmu$, the measure $\nu$ is supported on
 $$\partial T:=T\setminus \bigcup_{n\geq 0} T^{(n)}. $$   
Furthermore, $\upmu$ and $\nu$ also coincide on  $T_u$ for  $u \in \mathbb{U}$ since 
    \begin{equation}\label{equation_Tus}
        \lim_{n \to \infty} \nu_n(T_u)  = \nu(T_u), \quad \text{ for } u \in \mathbb{U},   
    \end{equation}
    as a consequence of Portmanteau's theorem paired with the fact that $\nu$ is supported on $\partial T$. We will now infer from these properties that  $\upmu$ and $\nu$ coincide on every closed set of $T$, which will complete the proof. To this end, fix an arbitrary closed set  $C$ and write $\partial C = C \cap \partial T$. We already know that $\upmu$ and $ \nu$ do not charge $C\setminus \partial C$, so it remains to check that $\upmu(\partial C) = \nu(\partial C)$. In this direction, for every $n \geq 0$ we let $\mathbb{U}^n_C$ be the subset  $\{u \in \mathbb{U}^n : T_u \cap C \neq \emptyset\}$. By \eqref{sum:finite} it is straightforward to verify that  
    \begin{equation}\label{equation:eq}
        \partial C = \bigcap_{n \geq 0} \bigcup_{ u \in \mathbb{U}^n_C } T_u. 
    \end{equation}
Since the sequence of sets $\bigcup_{ u \in \mathbb{U}^n_C } T_u,$  $n \geq 0,$ 
    is non-increasing and the sets $T_u\cap \partial T$, $u \in \mathbb{U}^n$, are disjoint, we infer from \eqref{equation:eq} that 
    \begin{align*}
        \nu(C) = \lim_{n \to \infty } \nu\Big( \bigcup_{ u \in \mathbb{U}^n_C } T_u \Big) = \lim_{n \to \infty} \sum_{u \in \mathbb{U}_C^n} \nu(T_u), 
    \end{align*}
    and the same equalities hold if we replace $\nu$ by $\upmu$. 
    Recalling from \eqref{equation_Tus} that  $\nu(T_u) = \upmu(T_u)$ for $u \in \mathbb{U}^n_C$, we deduce that $\nu(C) = \upmu(C)$. This proves that $\upmu$ and $\nu$ coincide on every closed set and therefore are identical.  
\end{proof}

We now have all the necessary ingredients needed to conclude the proof of the main result of the section. 

\begin{proof}[Proof of Theorem \ref{theorem:convergence}] We shall only prove the first convergence in Theorem \ref{theorem:convergence} since the second convergence follows applying analogous arguments and we recall that for simplicity we work under $\P$. In this direction, for $x>0$, we write $m(x)= x^{-\omega} \mathrm{w}^{(\omega)}(\log x)$ and remark that, by Proposition \ref{P:exltomega}   and Lemma \ref{lemma:weakConv}, it suffices to establish:
\begin{equation}\label{eq:L:T:u:upmu}
\lim_{x\downarrow  0}        \mathbb{E}\Big( \big|\frac{L(x,T_u)}{m(x)} - \upmu(T_u) \big| \wedge 1 \Big)=0, 
\end{equation}
for every $u\in \mathbb{U}$. For the remaining of the proof we fix $u\in \mathbb{U}$. Observe that, by the discussion at the end of Section \ref{sec:construc} and the fact that the harmonic measures is a limit of rescaled weight measures,   the distribution of $ (L(x,T) , \upmu(T))$ under $\mathbb{P}_y$ coincides with the one of  $(L(x/y,T) , y^{\omega} \upmu(T))$ under $\mathbb{P}$. By conditioning on the first $|u|$ generations,  the branching property yields: 
$$ \mathbb{E}\Big( \big|\frac{L(x,T_u)}{m(x)} - \upmu(T_u) \big| \wedge 1 \Big)= \E\Big(\mathbf{1}_{\chi(u)>0}G\big(x,\chi(u)\big)\Big),$$
where 
$$G(x,y):= \mathbb{E}\Big( \big|\frac{m(x/y)}{m(x)}\cdot \frac{L(x/y,T)}{m(x/y)} - y^{\omega}\upmu(T) \big| \wedge 1 \Big),\quad x,y>0.$$
The function $G$ is bounded by $1$, and by Propositios \ref{P:exltomega}  and  \ref{P:cvnombrepar} we have:
     \begin{equation*}
         \lim_{x \to 0}\frac{m(x/y)}{m(x)}= y^{\omega}, \quad \text{ as well as  } \quad \lim_{x \to 0}  \frac{L(x/y,T)}{m(x/y)} = \upmu(T), \text{ in }  L_1(\mathbb{P}), 
     \end{equation*}
for every $y>0$.
     Hence, by an application of dominated convergence theorem we derive \eqref{eq:L:T:u:upmu} as desired.
\end{proof}

\section{Excursions away from a level set}    
 \label{sec:branchstruc}
 
Our main purpose in this section is to investigate the level sets
$$ \mathcal L(x)=\{t\in T: g(t)=x\}, \qquad x>0,$$
as well as the excursions of the decoration away from $x$.
For the sake of simplicity, we work under $\P=\P_1$ when the decoration at the root is $g(\uprho)=1$; we further  focus on the level  $1$ and write 
$$\mathcal L = \mathcal L(1).$$
We stress that level sets $\mathcal L(x)$ for any $x>0$ can then be treated by an application of the local decomposition induced by the sub-tree $T_{\neq x}$ in the notation of Section \ref{sec:convergenceH}, and that changing the initial decoration to an arbitrary $y>0$ is of course  an easy matter by self-similarity. 

We call the closure of a connected component of  $T\backslash \mathcal L$ an excursion-subtree. Each excursion-subtree is naturally rooted at the closest point to $\uprho$, called the debut  of the excursion,  and further decorated by the restriction of $g$; we then simply call  a decorated excursion-subtree an excursion. Observe that by construction,
the decoration of an excursion at its root always equals $1$. We will soon observe that excursions have also an end and use this end as a natural mark.

We will start the study of excursions with the analysis of those that meet the ancestral segment. Their description involves a stopped Poisson point process, which notably enables us to define the so-called  excursion measure $\mathbf{N}\SB$. We will next show that  the level set $\mathcal L$  equipped 
with the local time distance has the structure of  a continuous branching tree; this should be viewed as an analog of \cite[Theorem 5.1]{RRO1} for Markov processes indexed by a L\'evy tree. Finally, we will describe  the family of excursions in terms of a Poisson point process, which is of course reminiscent of the classical result of K. It\^{o} for the excursions of a Markov process away from its starting point, and also of \cite[Theorem 3.8]{RRO2} for Markov processes indexed by a L\'evy tree.

\subsection{Excursions debuting on the ancestral segment}
Recall from Section \ref{sec:nutshell} that we can think of an ssMt as representing a population that evolves when time passes. 
The ancestor is labelled by $\varnothing$ and associated to an oriented closed segment $S\svn \subset T$ which we refer to as the ancestral segment and whose left extremity is the root $\uprho$.
We also denote  the right extremity of $S\svn$ by $\uprho\SB$, so $S\svn=\llbracket \uprho, \uprho\SB \rrbracket$, and use $\uprho\SB$ as a marked point on the self-similar Markov tree.  We recall the notation $\T\SB$ for the space of isomorphism classes
 of marked decorated trees that was discussed at the end of \cite[Section 1.4]{ssMt}. We shall also often view $S\svn$ as the closed interval $[0, z\svn]$, where $z\svn=|S\svn|$ is the lifetime of the ancestor. The origin $0$ is then identified with  $\uprho$ and the lifetime $z\svn$
with $\uprho\SB$.

The decoration $f\svn$ along $S\svn$ describes the size of the ancestor as a function of its age; its dynamics are that of a self-similar Markov process starting from $1$ and characterized by the pair $(\psi;\alpha)$,
where $\psi$ is the L\'evy exponent of the underlying  L\'evy process $\xi$ in the Lamperti transformation. 
We write
$$\mathcal L\svn=S\svn \cap \mathcal L = \{t\leq z\svn: f\svn(t)=1\},$$
and for simplicity, since the level $1$ has been fixed in this section,
$$L\svn (t)= L(1, \llbracket \uprho, t\rrbracket), \qquad t\in S\svn,$$
for the local time process at level $1$ of the decoration on the ancestral segment.
We also define the right-continuous inverse local time
$$L^{-1}\svn (a)= \inf\{t\geq 0: L\svn(t)>a\}, \qquad a\geq 0,$$
with the -unusual but convenient for our current purpose- convention that 
$$L^{-1}\svn (a)= z\svn, \qquad \text{for }a\geq L\svn(z\svn).$$
We recall a few basic facts in this setting.

\begin{lemma}\label{L:wellknown} The following assertions hold $\P$-a.s.
\begin{enumerate}
\item[(i)]  The decoration $f\svn$ is continuous on $\mathcal L\svn$.
\item[(ii)] $\mathcal L\svn$ is a perfect closed set with Lebesgue measure $0$.
\item[(iii)] The family of the open interval components of $[0, z\svn) \backslash \mathcal L\svn$
is given by
$$\big\{ \left( L^{-1}\svn (a-), L^{-1}\svn (a) \right): a>0 \text{ with } L^{-1}\svn (a)>L^{-1}\svn (a-)\big\}.$$
\end{enumerate}
\end{lemma}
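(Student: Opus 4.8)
The plan is to prove the three assertions by transferring well-known facts about the local time of the underlying Lévy process $\xi$ through the Lamperti transformation. Recall that the decoration $f\svn$ on the ancestral segment is the image under the Lamperti time-change \eqref{E:Lamperti} of a self-similar Markov process with characteristics $(\psi;\alpha)$, hence of $\e^{\xi}$ where $\xi$ satisfies the regularity assumption \eqref{E:reg} and the occupation-density condition \eqref{E:abs}. Since $g(t)=x$ on $S\svn$ exactly when $\xi$ is at level $\log x = 0$ (as $x=1$), the level set $\mathcal L\svn$ is the image of the Lévy level set $\{s<\zeta:\xi_s=0\}$ under the (strictly increasing, continuous) Lamperti time-change $s\mapsto t=\int_0^s\e^{\alpha\xi_r}\dd r$. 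The strategy is therefore to establish the analogous statements for $\xi$ and then push them forward, noting that the time-change is a homeomorphism between $[0,\zeta)$ and $[0,z\svn)$ that maps the Lévy level set onto $\mathcal L\svn$ and open interval components to open interval components.

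First I would handle (ii), which is the foundational statement. The level set $\{s<\zeta:\xi_s=0\}$ is closed $P$-a.s.: the only way a level set of an rcll path fails to be closed is if a jump starts or ends exactly at $0$, and under \eqref{E:abs} the process has local times, so $0$ is hit on a set of positive Lebesgue-measure-zero but the potential density is finite; more directly, the probability that $\xi$ has a jump arriving at or departing from the fixed level $0$ is zero since the set of jump times is countable and for each the starting and ending positions have diffuse law. That the Lebesgue measure of the Lévy level set is zero is immediate from the occupation density formula \eqref{E:occden}: $\int_0^\infty \indset{\xi_s=0}\dd s = \ell(0,\infty)\cdot\{0\}$-measure $=0$ since $\dd x$ gives no mass to $\{0\}$. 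That the set is perfect follows from regularity \eqref{E:reg} combined with the strong Markov property: \eqref{E:reg} says $0$ is regular for itself, so it is not isolated from the right; quasi-left-continuity and the Markov property at the hitting time show it is not isolated from the left either. The time-change, being a strictly increasing continuous bijection with continuous inverse on the relevant intervals, preserves closedness, perfectness, and (since $\e^{\alpha\xi}$ is bounded away from $0$ and $\infty$ on any compact level set neighbourhood) the Lebesgue-null property.

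For (iii) I would use that the inverse local time $L^{-1}\svn$ is the Lamperti-transformed analog of the inverse local time of $\xi$, whose range is exactly the closure of the level set; the open interval components of the complement correspond bijectively to the jumps of the (right-continuous) inverse local time, i.e. to the values $a>0$ with $L^{-1}\svn(a)>L^{-1}\svn(a-)$. This is the standard excursion-theoretic description of the complement of the zero set via the jumps of the inverse local time, and the convention fixing $L^{-1}\svn(a)=z\svn$ for $a\geq L\svn(z\svn)$ correctly accounts for the final (possibly infinite-looking but here truncated) excursion interval ending at the lifetime. For (i), the continuity of $f\svn$ on $\mathcal L\svn$ follows because $f\svn=\e^{\xi\circ(\text{time-change})}$ and on the level set $\xi$ takes the constant value $0$; any discontinuity of $f\svn$ at a point of $\mathcal L\svn$ would force a jump of $\xi$ landing at or leaving $0$, which is excluded $P$-a.s. exactly as in the proof of closedness in (ii).

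The main obstacle I expect is the careful justification that no jump of $\xi$ starts or ends precisely at the fixed level $0$, since this single fact underlies both the closedness in (ii) and the continuity in (i), and it must be argued cleanly from the structure of the Lévy process rather than taken for granted; the footnote in the excerpt signals that the authors defer to ``the proof of the forthcoming Lemma \ref{L:wellknown}'' for exactly this point. The rest is routine transfer through the Lamperti transformation, together with invoking the standard Lévy-process facts (regularity \eqref{E:reg}, the occupation density \eqref{E:occden}, and the inverse-local-time description of the complement of the zero set) already recalled in Section \ref{sec:ltmls}.
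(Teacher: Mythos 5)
Your overall strategy coincides with the paper's: everything is transferred through the Lamperti time-change, and the whole lemma reduces to the single fact that, $P$-a.s., no jump of $\xi$ starts or ends exactly at the fixed level $0$ (equivalently, if $\xi_{t}=0$ or $\xi_{t-}=0$ for some $t>0$, then $\xi_t=\xi_{t-}=0$). Your treatment of (ii) via the occupation density formula and regularity, and of (iii) via the jumps of the inverse local time, matches what the paper treats as standard; indeed the paper's written proof only details (i), which rests entirely on that key fact.

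The genuine gap is that you never actually prove this key fact. You assert that ``the set of jump times is countable and for each the starting and ending positions have diffuse law,'' and in your closing paragraph you flag the point as the main obstacle without resolving it. As written, the assertion begs the question: the jump times are random, so one cannot directly invoke diffuseness of the law of $\xi_t$ at a fixed time $t$; some independence structure is needed to condition on the jump time. The paper's proof supplies exactly this. Fix $\varepsilon>0$ and let $\xi''$ be the compound Poisson process formed by the jumps of $\xi$ of absolute size greater than $\varepsilon$; by the L\'evy--It\^o decomposition, $\xi'=\xi-\xi''$ is a L\'evy process independent of $\xi''$. If $\varsigma$ denotes the $n$-th jump time of $\xi''$, then conditionally on $\xi''$ the variable $\xi'_{\varsigma}$ has a continuous (diffuse) distribution by the Hartman--Wintner theorem \cite[Theorem 27.16]{Sato}, hence so do $\xi_{\varsigma}=\xi'_{\varsigma}+\xi''_{\varsigma}$ and $\xi_{\varsigma-}=\xi'_{\varsigma}+\xi''_{\varsigma-}$, and the probability that either equals $0$ is zero. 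Summing over $n\geq 1$ and letting $\varepsilon\to 0$ exhausts all jump times. Without this decomposition-plus-independence argument (or a substitute for it), neither the continuity in (i) nor the closedness in (ii) is established; the remainder of your proposal, i.e.\ the transfer through the Lamperti homeomorphism and the standard facts about regenerative level sets, is sound.
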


 \begin{proof} Let us briefly recall the proof of (i), since a slightly modified argument will be also useful later on. 
 By the Lamperti transformation, the claim amounts to the well-known fact that a.s. for the L\'evy process $\xi$,
 if $t>0$ is such that either $\xi_{t}=0$ or $\xi_{t-}=0$, then actually $\xi_t=\xi_{t-}=0$. 
 
 Fix any $\varepsilon>0$, consider the times at which $\xi$ has a jump with absolute size $|\Delta \xi|>\varepsilon$, and write $\xi''$ for the compound Poisson process 
 obtained by summing those jumps as time passes.  It follows from the L\'evy-It\^{o} decomposition and standard properties of Poisson point processes that $\xi'=\xi-\xi''$ is also a L\'evy process, which is independent of  $\xi''$. Fix any $n\geq 1$ and consider the $n$-th jump time of $\xi''$, say $\varsigma$. Recalling from a well-known result due to Hartman and Wintner \cite[Theorem 27.16]{Sato} that the distribution of $\xi'_t$ is continuous for any $t>0$, we now see that the same holds for $\xi_{\varsigma}=\xi'_{\varsigma} +  \xi''_{\varsigma}$
 and $\xi_{\varsigma-}=\xi'_{\varsigma} +  \xi''_{\varsigma-}$; in particular the probability that $\xi_{\varsigma}=0$ or $\xi_{\varsigma-}=0$ is null.
    \end{proof}

The contribution of the ancestor to the population is encoded by its reproduction measure $\eta\svn(\dd s, \dd x)$, a point process on $S\svn \times \R_+$ 
that records the birth-time and the germ decoration of its children. We now observe that the ancestor does not reproduce when its decoration equals $1$.
\begin{lemma}\label{lemma:puntosx} One has 
 $\eta\svn(\mathcal L\svn\times \R_+) = 0$,  $\P$-a.s.
\end{lemma}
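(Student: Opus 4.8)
The plan is to push the statement through the Lamperti transformation down to the underlying Lévy process $\xi$ with characteristics $(\sigma^2,\mathrm a,\Lambda_0)$ that generates the ancestral decoration $f\svn$. Under this correspondence the level set $\mathcal L\svn$ becomes, in the Lévy time-scale, the zero set $\{s\ge 0:\xi_s=0\}$, while the birth events recorded by $\eta\svn$ become the atoms of the driving Poisson random measure with intensity $\dd s\,\boldsymbol\Lambda(\dd y_0,\dd\mathbf y)$: at such an atom $(s,y_0,\mathbf y)$ the ancestor performs a jump of size $y_0$ of $\xi$ and simultaneously begets its children. Thus the claim $\eta\svn(\mathcal L\svn\times\R_+)=0$ amounts to showing that, $\P$-a.s., no atom carrying a child sits at a time $s$ with $\xi_s=0$.

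The morally correct reason is short. The first projection of $\boldsymbol\Lambda$ is the Lévy measure $\Lambda_0$, which does not charge $\{0\}$; hence $\boldsymbol\Lambda(\{0\}\times\calS_1)=0$, so $\P$-a.s. every atom has $y_0\ne 0$ and therefore reproduction occurs only at genuine jump times of $\xi$. One may then invoke the fact already isolated in the proof of Lemma~\ref{L:wellknown}(i) (and which that proof announces will be reused): $\P$-a.s., whenever $\xi_t=0$ or $\xi_{t-}=0$ one has $\xi_t=\xi_{t-}$. At a jump time $s$ one has $\xi_{s-}\ne\xi_s$, so this forces $\xi_s\ne 0$; the decoration therefore differs from $1$ at every birth time, which is exactly the assertion.

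The point requiring genuine care — and the main obstacle — is that a single atom may carry infinitely many children, so that the informal passage ``births are confined to jump times'' should be made rigorous without a counting argument that blows up. I would make it watertight by a first-moment computation via the compensation (Palm) formula for the Poisson random measure. Restricting to children whose germ decoration is at least $\delta>0$, and noting that on $\{\xi_{s-}=0\}$ such a child corresponds to an index with $y_n\ge\log\delta$, the compensation formula yields
\begin{equation*}
\E\big(\eta\svn(\mathcal L\svn\times[\delta,\infty))\big)
= c_\delta\cdot\E\Big(\int_0^{\zeta}\indset{\xi_{s-}=0}\,\dd s\Big),
\qquad
c_\delta=\int_{\calS_1}\boldsymbol\Lambda_1(\dd\mathbf y)\,\#\{n:y_n\ge\log\delta\}.
\end{equation*}
Here $c_\delta<\infty$: Assumption~\ref{A:subcrit} gives $\int_{\calS_1}\boldsymbol\Lambda_1(\dd\mathbf y)\sum_n\e^{\gamma_0 y_n}<\infty$, and $\indset{y_n\ge\log\delta}\le\delta^{-\gamma_0}\e^{\gamma_0 y_n}$. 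On the other hand, the occupation density formula \eqref{E:occden} shows that the zero set of $\xi$ is Lebesgue-null, so the expectation on the right vanishes. Hence $\eta\svn(\mathcal L\svn\times[\delta,\infty))=0$ $\P$-a.s. for every $\delta>0$, and letting $\delta\downarrow 0$ along a sequence gives the result. This route handles the infinitely-many-children issue transparently and does not even rely on the identification of births with jump times, so it is the version I would ultimately write down.
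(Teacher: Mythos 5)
Your proposal is correct, but the argument you say you would ultimately write down (the first-moment computation) takes a genuinely different route from the paper's proof. The paper works directly with the driving Poisson random measure $\mathbf{N}$: it fixes a Borel set $B\subset[-\infty,\infty)\times\calS_1$ with $\boldsymbol{\Lambda}(B)<\infty$, lets $\varsigma$ be the $n$-th atom time of $\mathbf{N}$ in $B$, reruns the Hartman--Wintner/independent-decomposition argument from the proof of Lemma~\ref{L:wellknown}(i) to get that $\xi_{\varsigma}=0$ or $\xi_{\varsigma-}=0$ has probability zero, and exhausts all atoms by sigma-finiteness of $\boldsymbol{\Lambda}$. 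You replace this enumeration by the compensation formula: the expected number of children with germ at least $\delta$ born at an instant where the parent's position vanishes is $c_\delta$ times the expected Lebesgue measure of the zero set of $\xi$, which is zero by the occupation density formula. Your version is more elementary (no Hartman--Wintner, no splitting of $\xi$ into independent pieces) and, as you note, does not rely on identifying birth events with jump times of $\xi$ --- a real advantage, since your preliminary ``morally correct reason'' leans on the convention that $\Lambda_0$ does not charge $\{0\}$, whereas both the paper's argument and your moment computation would survive birth events at which the parent does not jump. What the paper's enumeration buys in exchange is the slightly stronger statement that neither $\xi_\varsigma$ nor $\xi_{\varsigma-}$ vanishes at any atom time, without any restriction to children of size at least $\delta$.

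One point you should make explicit if you keep only the moment computation: $\mathcal{L}\svn$ is the level set of the \emph{rcll} decoration, so in the L\'evy time scale it is $\{s:\xi_s=0\}$, whereas the compensation formula naturally produces the predictable indicator $\indset{\xi_{s-}=0}$; as written, your display only rules out atoms sitting at times with $\xi_{s-}=0$. This is not a genuine gap, but it needs a line: either invoke the a.s.\ identity $\{s:\xi_s=0\}=\{s:\xi_{s-}=0\}$, which is precisely what the proof of Lemma~\ref{L:wellknown}(i) establishes, or repeat the first-moment bound with the integrand $\indset{\xi_{s-}=-y_0}$ and observe that $E\big(\int_0^\zeta\indset{\xi_{s-}=c}\,\dd s\big)=0$ for every fixed $c$, so the $\boldsymbol{\Lambda}$-integral still vanishes by Tonelli.
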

\begin{proof} 
 The construction  of the decoration-reproduction process $(f\svn, \eta\svn)$ of the ancestor in \cite[Section 2.2]{ssMt} 
 consists in applying the Lamperti time-substitution to a pair $(\xi, \mathbf{N})$, where $\xi$ is a L\'evy process and $\mathbf{N}$ a  Poisson point process on $\R_+\times ([-\infty, \infty)\times \mathcal S_1)$ with homogeneous intensity $\dd t \times \boldsymbol{\Lambda}(\dd y , \dd \mathbf{y})$. 
 Consider any Borel set $B\subset [-\infty, \infty)\times \mathcal S_1$ with $\boldsymbol{\Lambda}(B)<\infty$ and write $\varsigma$ for the $n$-th time at which $\mathbf{N}$
 has an atom in $B$. The same argument as in the proof of Lemma \ref{L:wellknown}(i) shows that the probability that either  $\xi_{\varsigma-}=0$
 or $\xi_{\varsigma}=0$ equals $0$. By sigma-finiteness of the generalized L\'evy measure $\boldsymbol{\Lambda}$, this yields our claim.  
\end{proof}

We point at  the following important consequence of the preceding lemmas for the analysis of excursions.
\begin{corollary} \label{C:debut} $\P$-a.s., if\: $T'$ is an excursion-subtree with $T'\cap S\svn\neq \emptyset$, then
its debut $d'$  belongs to $\mathcal{L}\svn$ and $T'\cap S\svn=\llbracket d', d''\rrbracket$ for some $d''\neq d'$. We then call $d''$ the end of the excursion-subtree.
\end{corollary}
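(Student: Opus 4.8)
The plan is to combine elementary real-tree geometry with Lemma~\ref{lemma:puntosx}, the latter carrying all the probabilistic content. I work on the almost-sure event where $\mathcal L$ is closed, so that $T\setminus\mathcal L$ is open and every excursion-subtree is the closure $T'=\overline{T^{*}}$ of a connected component $T^{*}$ of $T\setminus\mathcal L$; since a connected subset of a real tree is geodesically convex and closures preserve convexity, $T'$ is a closed subtree. As $T'$ and $S\svn$ are convex and $T'\cap S\svn\neq\emptyset$, the intersection is a subsegment $T'\cap S\svn=\llbracket d',d''\rrbracket$ with $d'\preceq d''$. I would first argue that $d'$ is the debut of $T'$. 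Writing $\pi(t)$ for the point at which $\llbracket\uprho,t\rrbracket$ leaves $S\svn$, convexity of $T'$ gives $\pi(t)\in\llbracket d',t\rrbracket\subseteq T'$, whence $\pi(t)\in T'\cap S\svn$ and $\pi(t)\succeq d'$; since $\pi(t)$ lies on $\llbracket\uprho,t\rrbracket$ this yields $d'\preceq t$ for every $t\in T'$. Thus $d'$ is the point of $T'$ closest to $\uprho$, that is the debut, and it lies on $S\svn$.

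Next I would show $d'\in\mathcal L\svn$. Any point of $T'\setminus T^{*}$ is a limit of points of $T^{*}$ lying in no other (open) component of $T\setminus\mathcal L$, hence belongs to $\mathcal L$, so $g\equiv1$ on $T'\setminus T^{*}$. Moreover, if $d'\neq\uprho$ then $d'$ cannot be interior to $T^{*}$: otherwise a neighbourhood of $d'$ in $T$ would lie in $T^{*}\subseteq T'$ and contain points of $\llbracket\uprho,d'\rrbracket$ strictly closer to the root, contradicting the minimality of $d'$. Therefore either $d'=\uprho$, where $g(\uprho)=1$ by the normalisation of the root decoration, or $d'\in T'\setminus T^{*}$; in both cases $g(d')=1$, and since $d'\in S\svn$ we conclude $d'\in\mathcal L\svn$.

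The crux is to rule out $d''=d'$, and this is where Lemma~\ref{lemma:puntosx} is essential. Suppose $T'\cap S\svn=\{d'\}$. Then $T^{*}$ avoids $S\svn$ entirely; being connected, it must lie inside a single subtree dangling from $S\svn$, because any path joining points of two distinct dangling subtrees crosses $S\svn$. Such a dangling subtree meets $S\svn$ only at its attachment point, so the unique point of $S\svn$ in $\overline{T^{*}}=T'$ is that attachment point; as $d'\in T'\cap S\svn$, this forces $d'$ to be a reproduction time of the ancestor, i.e. an atom of $\eta\svn$ on $\{d'\}\times\R_+$. But $d'\in\mathcal L\svn$, whereas Lemma~\ref{lemma:puntosx} gives $\eta\svn(\mathcal L\svn\times\R_+)=0$, $\P$-a.s., so no child is begotten at a level-$1$ point, a contradiction. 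Hence $d''\neq d'$ and $T'\cap S\svn=\llbracket d',d''\rrbracket$ is nondegenerate, with $d''$ its end. I expect this final step to be the main obstacle: the delicate part is the real-tree argument that single-point contact localises $T^{*}$ inside one dangling subtree and thereby produces a reproduction event at $d'$, after which Lemma~\ref{lemma:puntosx} closes the proof.
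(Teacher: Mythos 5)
Your proof is correct and follows essentially the same three-step route as the paper: (1) convexity of $T'$ and of $S\svn$ places the debut on the ancestral segment and identifies it with the left end of $T'\cap S\svn$; (2) the debut lies in $\mathcal{L}\svn$; (3) single-point contact would force $T^{*}$ into a single subtree dangling from $S\svn$ and attached at $d'$, i.e.\ a reproduction event of the ancestor at a point of $\mathcal{L}\svn$, which Lemma~\ref{lemma:puntosx} excludes. Step (3), which you rightly single out as the crux, is exactly the paper's argument, stated there more tersely as ``$d'$ is not a branch-point of $T$''; your unpacking of it is accurate.

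The one genuine deviation is in step (2), and it introduces a small forward-dependence. You deduce $g(d')=1$ from the inclusion $\overline{T^{*}}\setminus T^{*}\subseteq\mathcal L$, which requires the full level set $\mathcal L=g^{-1}(1)$ to be closed (so that the components of $T\setminus\mathcal L$ are open and every boundary point of $T^{*}$ lies in $\mathcal L$). Since the decoration $g$ is discontinuous, this is not automatic: in the paper it is only established later, in Corollary~\ref{C:Lnull}(ii), using the finiteness of $\mathbb L$ from \eqref{E:Lfinite} together with the per-segment statement of Lemma~\ref{L:wellknown}. The dependence is not circular, so your argument can be repaired by citing those facts, but as written you assert the closedness of $\mathcal L$ at a point where only the closedness of $\mathcal{L}\svn$ (Lemma~\ref{L:wellknown}(ii)) is available. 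The paper sidesteps the issue by arguing entirely inside $S\svn$: if $f\svn(d')\neq 1$, then since $\mathcal{L}\svn$ is closed, a whole neighbourhood of $d'$ in $S\svn$ avoids level $1$, hence lies in the same component $T^{*}$ and contains points strictly closer to the root than $d'$, contradicting minimality. Substituting that local argument for your global one removes the only weak point in the proposal.
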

\begin{proof} If $d'\not\in S\svn$, then for any point $t\in T'\cap S\svn$, the segment
$\llbracket d', t\rrbracket$ would be included in $T'$ and would thus contain a point closer to $\uprho$ than $d'$. This is a contradiction, so $d'\in S\svn$.
If $d'\not \in \mathcal{L}\svn$, 
 that is if $f\svn(d')\neq 1$, then from Lemma \ref{L:wellknown}(ii), there would be an open neighborhood of $d'$ in $S\svn$ on which the decoration 
 would avoid $1$ and again $d'$ would not be the closest point of $T'$ to the root. Thus $d'\in \mathcal{L}\svn$, and we know from Lemma \ref{lemma:puntosx} that $d'$ is not a branch-point in $T$.
So the intersection $T' \cap S\svn$ is actually a non-degenerate closed segment in $S\svn$; its right-end is the point $d''$ in the statement. 
 \end{proof}
 
We are interested in the excursions that meet the ancestral segment, and roughly speaking, Corollary \ref{C:debut} incites us to  log the tree $T$ at each point of $\mathcal L\svn$. Let $p\svn: T\to S\svn$ denote the projection on the ancestral segment. For any $a> 0$ with $L^{-1}\svn (a)>L^{-1}\svn (a-)$, 
write 
$$T\svn^{(a)}=\big\{t\in T: p\svn(t)\in \llbracket L^{-1}\svn (a-),L^{-1}\svn (a)\rrbracket\big\},$$
and $g\svn^{(a)}$ for the restriction of $g$ to $T\svn^{(a)}$. As usual we equip $T\svn^{(a)}$ with the restricted distance and we also endow it with the  root  $\uprho\svn^{(a)}= L^{-1}\svn (a-)$ and the mark  $\uprho\svn^{(a\bullet)}=L^{-1}\svn (a)$. We call $\llbracket \uprho\svn^{(a)}, \uprho\svn^{(a\bullet)}\rrbracket\subset S\svn$ the marked segment, and 
 write $\mathtt{T}\svn^{(a\bullet)}\in \T\SB$ for the isomorphism class of the marked decorated subtree $(T\svn^{(a)}, g\svn^{(a)}, \uprho\svn^{(a)}, \uprho\svn^{(a\bullet)})$. Roughly speaking, the roots and the marks on those subtrees enables us to reconstruct the original marked ssMt $\mathtt{T}\SB$ from the family 
 $\{\mathtt{T}\svn^{(a\bullet)}: 0< a \leq L\svn(z\svn)\}$, much in the same way as the original Markov process is recovered from its excursion process in It\^{o}'s synthesis.

 We stress that for $a<L\svn(z\svn)$,
the decoration $g\svn^{(a)}$ takes the value $1$ on the marked segment exactly at its extremities.
 On the other hand, for $a=L\svn(z\svn)$, we have $L^{-1}\svn (a-)=\sup\{t\leq z\svn: f\svn(t)=1\}$
 and $L^{-1}\svn (a)= z\svn$. So 
 $\mathtt{T}\svn^{(L\svn(z\svn)\bullet)}$ is  the fringe subtree rooted at the last passage of the decoration at level $1$ on the ancestral segment and further marked at $\uprho\SB$; we stress that the decoration at the marked point now differs from $1$.
  
 Recall that under $\P$, the total local time on the ancestral segment, $L\svn(z\svn)$, is exponentially distributed with mean
$v(0)$, where $v$ is  the potential density of the L\'evy process with exponent $\psi$. We now observe that the family of decorated subtrees that results from logging 
can be described in terms of a homogeneous Poisson random measure on $[0,\infty)\times \T\SB$.

\begin{lemma} \label{L:excancestral} Under $\P$, the point process\footnote{Implicitly, the sum only involves $a$'s with $L^{-1}\svn (a-)<L^{-1}\svn (a)$ and $a\leq L\svn(z\svn)$. The same convention applies as well in the sequel.}
$$\sum_{a>0} \delta_{(a,\mathtt{T}\svn^{(a\bullet)})}$$
is a homogeneous Poisson measure on $\R_+\times \T\SB$  stopped at  the first time when the decoration component evaluated at the marked point differs from $1$.
\end{lemma}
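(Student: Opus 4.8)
The plan is to derive the statement from It\^o's excursion theory for the underlying L\'evy process, suitably enriched so as to record the reproduction events and the subtrees they carry. By the Lamperti transformation \eqref{E:Lamperti}, the decoration-reproduction process $(f\svn,\eta\svn)$ on the ancestral segment is a strictly increasing continuous time-change of the pair $(\xi,\mathbf N)$, where $\xi$ is the L\'evy process with exponent $\psi$ and $\mathbf N$ the driving Poisson random measure with intensity $\dd t\,\boldsymbol{\Lambda}(\dd y_0,\dd\mathbf y)$ used in the construction of \cite[Section 2.2]{ssMt}. Under this time-change the level-$1$ set $\mathcal L\svn$ is the image of the zero set $\{\xi=0\}$, the local time $L\svn$ is the image of the local time $\ell(0,\cdot)$ of $\xi$, and the excursion intervals of $f\svn$ away from $1$ correspond bijectively, with preservation of the local-time parametrisation, to those of $\xi$ away from $0$. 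Since $0$ is regular for itself by \eqref{E:reg} and the total local time $\ell(0,\infty)=L\svn(z\svn)$ is a.s. finite (exponential with mean $v(0)$), it suffices to establish the Poissonian structure at the level of $(\xi,\mathbf N)$ together with its attached subtrees, and then to transport it through the Lamperti map.

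First I would invoke classical It\^o-L\'evy excursion theory for $\xi$: the excursions of $\xi$ away from $0$, indexed by the local time $a$, form a Poisson point process, run until the first non-returning excursion. As $\xi$ either drifts to $-\infty$ or is killed, there is exactly one such escaping excursion, namely the one straddling the lifetime $\zeta$, and it occurs at local-time level $\ell(0,\infty)=L\svn(z\svn)$. After the time-change this escaping excursion becomes the fringe subtree rooted at the last passage of $f\svn$ at $1$ and marked at $\uprho\SB$, whose decoration at the marked point equals $0\neq 1$, whereas every earlier excursion ends at a point of $\mathcal L\svn$ and hence has marked-point decoration equal to $1$. Thus the stopping rule ``first time the decoration at the marked point differs from $1$'' coincides exactly with the first escaping excursion, which is precisely how It\^o's theory truncates the Poisson measure.

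The main work, and the step I expect to be the genuine obstacle, is to enrich each excursion of $\xi$ with the reproduction marks and dangling subtrees it carries, and to verify that this enrichment is an \emph{independent marking} of the excursion Poisson measure; here the coupling between $\xi$ and $\mathbf N$ must be handled carefully. I would disintegrate $\boldsymbol{\Lambda}(\dd y_0,\dd\mathbf y)=\Lambda_0(\dd y_0)\,K(y_0,\dd\mathbf y)$ along its first projection, so that $\xi$ is a measurable functional of the first coordinates of $\mathbf N$ and the Gaussian part, while, conditionally on $\xi$, the marking theorem for Poisson measures renders the offspring data $\mathbf y^j$ conditionally independent across the atoms. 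Combined with the branching (local decomposition) property of the ssMt \cite[Chapter 4]{ssMt}, whereby given the ancestral decoration and the germs the subtrees dangling from $S\svn$ are independent with respective laws $\Q_{\ell_i}$, this shows that conditionally on $\xi$ the full reproduction-plus-subtree data in disjoint excursion intervals are independent, each with a law depending only on the corresponding excursion path. Because no reproduction occurs on $\mathcal L\svn$ by Lemma \ref{lemma:puntosx} and each dangling subtree projects onto the single grafting point of $S\svn$ (so it lies in a unique excursion-subtree $T\svn^{(a)}$), the excursion-subtree indexed by $a$ collects exactly the marks and subtrees of excursion $a$. The marking theorem for Poisson point processes then upgrades the It\^o excursion measure of $\xi$ into a Poisson point process of enriched excursions.

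To conclude, I would transport this Poisson point process through the Lamperti time-change and the rooting-and-gluing construction: the measurable map sending an enriched excursion to the isomorphism class $\mathtt{T}\svn^{(a\bullet)}\in\T\SB$ of the associated marked decorated subtree preserves the Poisson property, while spatial homogeneity of $\xi$ together with additivity of the local time yields homogeneity in $a$, with intensity of the form $\dd a\otimes\mathbf N\SB$ for a $\sigma$-finite excursion measure $\mathbf N\SB$ on $\T\SB$. The truncation identified above then gives precisely the asserted stopping at the first excursion whose marked-point decoration differs from $1$.
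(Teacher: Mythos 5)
Your proof is correct, but it takes a genuinely different (though related) route from the paper's. The paper works directly at the level of the decorated tree: it notes that $L^{-1}\svn(a)$ is a stopping time for the ancestral decoration process, applies the local decomposition induced by the hull $\{t\in T: p\svn(t)\leq L^{-1}\svn(a)\}$ (\cite[Proposition 4.8]{ssMt}) together with Lemma \ref{lemma:puntosx} to conclude that the fringe subtree rooted at $L^{-1}\svn(a)$ is a fresh copy of the marked ssMt under $\P$, independent of $\sum_{b\leq a}\delta_{\mathtt{T}\svn^{(b\bullet)}}$, and then invokes the general regenerative-excursion theorem \cite[Theorem XII.2.4]{RY} to obtain the stopped homogeneous Poisson measure in one stroke. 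You instead factor the argument through the L\'evy-process level: It\^o excursion theory for $\xi$ away from $0$ gives the Poisson structure of the spine excursions, and the reproduction data and dangling subtrees are attached by an independent-marking argument (disintegrating $\boldsymbol{\Lambda}$ along its first projection, then using the local decomposition along the whole ancestral segment rather than the hull at a stopping time). Both routes ultimately rest on regeneration at inverse local times; the paper's is shorter because the hull decomposition handles the entire enriched object at once, whereas yours must separately verify the conditional-independence structure of the marking (the coupling of $\xi$ with $\mathbf N$, and the fact that each reproduction event and dangling subtree sits inside a single excursion interval, which follows from Lemma \ref{lemma:puntosx}) --- points you correctly identify and address. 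Your identification of the stopping rule with the unique escaping excursion, i.e.\ the fringe subtree marked at $\uprho\SB$ where the decoration equals $0\neq 1$, also matches the paper. What your approach buys is a more explicit description of where the Poisson structure comes from (classical It\^o theory plus the marking theorem); what it costs is the extra bookkeeping that the hull decomposition makes unnecessary.
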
 
\begin{proof} The heart of the proof relies on the following regenerative property at inverse local times. 
Fix $a>0$ and observe that $L^{-1}\svn (a)$ is a stopping time in the natural filtration of the ancestral
decoration process. We now work conditionally on $a<L\svn(z\svn)$.
We know from \cite[Proposition 4.8]{ssMt} that the so-called hull $\{t\in T: p\svn(t) \leq L^{-1}\svn (a)\}$
induces a local decomposition.  Furthermore, since by Lemma \ref{lemma:puntosx}, $L^{-1}\svn (a)$ is not a time at which the ancestor reproduces, $L^{-1}\svn (a)$ is the root of a unique dangling subtree, namely the  fringe subtree rooted at $L^{-1}\svn (a)$. Plainly, the germ of the decoration of this fringe subtree is $1$. 
We infer from the local decomposition that the decorated fringe subtree rooted at $L^{-1}\svn (a)$ is distributed as $(T, g, \rho,\rho^\bullet)$, under  $\mathbb{P}$, and is independent of the point measure $\sum_{b\leq a} \delta_{\mathtt{T}\svn^{(b\bullet)}}$.  The proof is readily completed; see e.g. \cite[Sections XII.1-2]{RY} and notably Theorem XII.2.4 there.
\end{proof} 

Our next purpose is to  define a natural measure $\mathbf N\SB$ on $\T$ carried by the subset of marked decorated trees such that the decoration is never $1$ on the interior of the tree. We will argue in the final section that $\mathbf N\SB$ serves as the excursion measure of the self-similar Markov tree away from the level $1$, much in the same way as for It\^{o}'s excursion theory for Markov processes.
For this, we introduce a reduction operator that consists in pruning marked trees at the first points where their decoration return to $1$.
More precisely, the reduction operator will be applied to marked decorated trees for which the decoration never takes the value $1$ on the open marked segment;
we stress that we do not want to cut such a tree at its root even when the decoration there is $1$, but rather at points different from the root at which  the decoration returns to $1$ for the first time. Specifically,  for any marked decorated tree $\mathtt T\SB=(T,g, \uprho, \uprho\SB)$ such that $g$ never takes the value $1$ on the open marked segment $\rrbracket  \uprho, \uprho\SB \llbracket$,
 we write $R(\mathtt T\SB)= (R(T), g_{\mid R(T)}, \uprho, \uprho\SB)$, where
$$R(T)=\{t\in T: g(s)\neq 1\text{ for every }s\in \rrbracket \uprho, t\llbracket\}.$$
We now introduce the measure $\mathbf N\SB$ defined for any measurable set $A\subset \T\SB$ by
\begin{equation}\label{E:defQ}
\mathbf N\SB(A)=\frac{1}{v(0)}
 \E\left( \#\{a>0: R(\mathtt T\svn^{(a\bullet)})\in A\} \right).
\end{equation}
In words, up to the normalizing factor $1/v(0)$ which stems from $\E(L\svn(z\svn))=v(0)$,
$\mathbf N\SB$ is the intensity measure of the point process induced by the family of reduced trees  $R(\mathtt T\svn^{(a\bullet)})$. 
Observing that the length of the marked segment is $L^{-1}\svn (a)-L^{-1}\svn (a-)>0$, we readily 
see that $\mathbf N\SB$ is sigma-finite.

\begin{corollary} \label{C:excancestral} Under $\P$, the point process of excursions met on the ancestral segment, 
$$\sum_{a>0} \delta_{(a,R(\mathtt{T}\svn^{(a\bullet)}))},$$
is a homogeneous Poisson measure on $\R_+\times \T\SB$  with intensity $ \dd t \times \mathbf N\SB$
stopped at  the first time when the decoration component evaluated at its marked point differs from $1$.
\end{corollary}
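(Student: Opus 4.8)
The plan is to derive the statement directly from Lemma~\ref{L:excancestral} by applying the mapping theorem for Poisson random measures to the reduction operator $R$, and then to read off the intensity from the very definition \eqref{E:defQ} of $\mathbf N\SB$.

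First I would verify that $R$ is well-defined on every atom $\mathtt{T}\svn^{(a\bullet)}$ and that it interacts correctly with the stopping rule. By Lemma~\ref{L:wellknown}(iii), each marked segment is the closure of an interval component of $[0,z\svn)\setminus\mathcal L\svn$, so the decoration of $\mathtt{T}\svn^{(a\bullet)}$ avoids the value $1$ on the open marked segment $\rrbracket \uprho\svn^{(a)}, \uprho\svn^{(a\bullet)}\llbracket$; this is exactly the domain on which $R$ is defined. Moreover the marked point always survives the pruning, since the path from the root to the mark carries the value $1$ at most at its two extremities; hence $R$ leaves the decoration at the marked point unchanged. In particular $R$ sends a tree whose marked decoration equals $1$ to a tree of the same kind, and a tree whose marked decoration differs from $1$ to another such tree, so that $R$ commutes with the stopping rule of Lemma~\ref{L:excancestral}. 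I would also record the (routine) measurability of $R$ on $\T\SB$, sketching it only briefly.

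Next I would apply the mapping theorem. Lemma~\ref{L:excancestral} presents $\sum_{a>0}\delta_{(a,\mathtt{T}\svn^{(a\bullet)})}$ as a homogeneous Poisson measure on $\R_+\times\T\SB$ with some intensity $\dd t\times\nu\SB$, stopped at the first atom whose marked decoration differs from $1$. Pushing each atom forward by $R$ and using the commutation just established, the image $\sum_{a>0}\delta_{(a,R(\mathtt{T}\svn^{(a\bullet)}))}$ is again a homogeneous Poisson measure, now with intensity $\dd t\times R_\ast\nu\SB$, stopped at the same time. It then remains only to check that $R_\ast\nu\SB=\mathbf N\SB$. Writing $\tau=L\svn(z\svn)$, recall that $\tau$ is exponential with $\E(\tau)=v(0)$. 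For a homogeneous Poisson measure of intensity $\dd t\times\mu$ stopped at its first terminal atom, the mean number of retained atoms with mark in a measurable set $A\subset\T\SB$ equals $\E(\tau)\,\mu(A)=v(0)\,\mu(A)$; comparing with \eqref{E:defQ}, which defines $\mathbf N\SB(A)=v(0)^{-1}\E(\#\{a>0:R(\mathtt{T}\svn^{(a\bullet)})\in A\})$, forces $R_\ast\nu\SB(A)=\mathbf N\SB(A)$ for every $A$. This is the asserted intensity.

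The difficulty here lies not in any single computation but in making the interplay between reduction and stopping airtight: one must confirm that $R$ preserves the marked-point decoration, so that the unique terminal atom is mapped to a terminal atom and the stopping rule is genuinely respected, and that the normalization $1/v(0)$ in \eqref{E:defQ} is exactly the factor $\E(\tau)=v(0)$ produced by the mean-measure identity for the stopped Poisson measure. Once these two points are secured, the corollary follows from the mapping theorem without further effort, and I would close by referring to \cite{RY} for the synthesis of the stopped Poisson measure, exactly as in the proof of Lemma~\ref{L:excancestral}.
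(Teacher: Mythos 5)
Your proposal is correct and follows essentially the same route as the paper: the mapping theorem for Poisson random measures applied to Lemma~\ref{L:excancestral}, combined with the fact that a homogeneous Poisson measure stopped at a stopping time $\varsigma$ with finite mean has intensity $\E(\varsigma)\mu$ and the identity $\E(L\svn(z\svn))=v(0)$, which together identify the pushed-forward intensity with $\mathbf N\SB$ via \eqref{E:defQ}. Your additional verification that $R$ preserves the decoration at the marked point, so that the reduction commutes with the stopping rule, is a point the paper leaves implicit but is correctly handled here.
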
 
\begin{proof} We use the elementary  fact that for any measurable space $E$ equipped with a sigma-finite measure $\mu$,
if $\sum \delta_{(t_i, x_i)}$ is a  homogeneous Poisson point process on $\R_+\times E$  with intensity $ \dd t \times \mu$,
and if $\varsigma$ is a stopping time  with finite mean in the natural filtration of this process, then  the intensity measure on $E$ of the stopped point process $\sum_{t_i\leq \varsigma} \delta_{x_i}$ is $\E(\varsigma) \mu$.
The mapping theorem for Poisson random measures combined with Lemma \ref{L:excancestral} and the identity $\E(L\svn(z\svn))=v(0)$
complete the proof.
\end{proof}

We distinguish four possible patterns of excursions $R(\mathtt{T}\svn^{(a\bullet)})$, depending on whether the decoration at the marked point equals $1$ (that is when 
$0<a<L\svn(z\svn)$) or differs from $1$ (that is when $a=L\svn(z\svn)$), 
and on whether the reduction operation is or not degenerate. This is especially relevant to analyze 
the number of points away from the marked segment at which the decoration of an excursion takes the value $1$, which plays an important role in the next sections.
In this direction, we now conclude the present one by pointing at another useful local decomposition (again, see \cite[Section 4.1]{ssMt} for background).
Consider the subtree of $T$ given by
$$F\svn = \{t\in T: g(s)\neq 1\text{ for all } p\svn(t) \prec s \prec t\}.$$
\begin{lemma}\label{L:locdecsvn} Under $\P$, the subtree $F\svn $ induces a local decomposition.
\end{lemma}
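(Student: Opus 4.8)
The plan is to realise $F\svn$ through two nested decompositions — first the spinal (first–generation) decomposition along the ancestral segment, and then, inside each subtree dangling from $S\svn$, a pruning at the first return of the decoration to the level $1$ — and to check that the components produced this way carry the law $\Q=\Q_1$, conditionally independently. The statement will then follow from the general formalism of \cite[Chapter 4]{ssMt}, by the same reasoning that underlies the proofs of Propositions 4.3 and 4.9 there.

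First I would record the elementary geometry of $F\svn$. Since $p\svn(t)=t$ for $t\in S\svn$, the defining condition is vacuous on the ancestral segment, so $S\svn\subset F\svn$ and no component of $T\setminus F\svn$ has its debut on $S\svn$. For a point $t$ lying in the subtree hanging from $S\svn$ at a birth location $b\in S\svn$, one has $p\svn(t)=b$, and $t\in F\svn$ means exactly that $g$ avoids the value $1$ on the open segment $\rrbracket b,t\llbracket$. Writing the decomposition of $T$ along $S\svn$, conditionally on the ancestral decoration–reproduction pair $(f\svn,\eta\svn)$ the subtrees dangling from $S\svn$ are independent, each distributed as $\Q_\ell$ for its germ $\ell$ (the branching property of the construction, \cite[Section 2.2]{ssMt}); and $F\svn$ meets each such subtree precisely in its ``stopped at the first return to $1$'' subtree.

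The heart of the matter is then a single dangling subtree $\uptau\sim\Q_\ell$, pruned at the first point (strictly beyond its root) where the decoration equals $1$. Through the Lamperti transformation this is the pruning of the underlying Lévy process at its first passage at $0$, so the strong Markov property there produces a local decomposition of $\uptau$ in which, conditionally on the pruned part and on the debut points, the components are independent; this is exactly the argument in the proofs of Propositions 4.3 and 4.9 in \cite[Chapter 4]{ssMt}. When $\ell\neq1$ the pruned subtree is the analogue of $T_{\neq 1}$ for $\uptau$, while the degenerate case $\ell=1$, where regularity of $1$ forces $\uptau$ itself to be a single component, is harmless. I would then pin down the germ of each component: its debut $r$ satisfies $g(r)=1$, by Lemma \ref{lemma:puntosx} the level $1$ is never hit at a reproduction time so $r$ is not a branching point, and by the continuity of the decoration at level $1$ (Lemma \ref{L:wellknown}(i)) the germ equals $\lim_{x\to r}g(x)=1$. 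Hence every component of $T\setminus F\svn$ has germ $1$ and, conditionally, the law $\Q_1=\Q$.

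Splicing the two decompositions then yields the claim. The hard part will be the joint conditional independence: one must verify that, conditionally on the restriction $(F\svn,g_{\mid F\svn})$ and on the family of debuts (all of germ $1$), the components coming from different dangling subtrees of $S\svn$, and the several first–return excursions within a single one, are all simultaneously independent and $\Q$–distributed, i.e. that conditioning on $F\svn$ introduces no hidden coupling between excursions. This joint statement is precisely what the local–decomposition framework of \cite[Chapter 4]{ssMt} is built to provide, so rather than reprove it I would phrase the final step as an application of that framework, the only inputs being the strong Markov property of the underlying Lévy processes at their first passage at $0$ and Lemma \ref{lemma:puntosx}.
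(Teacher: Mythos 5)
Your proposal is correct and follows essentially the same route as the paper: both compose the local decomposition induced by the ancestral segment with the $T_{\neq 1}$-type decomposition (pruning at first returns to level $1$) inside each subtree dangling from $S\svn$, identify the germs as $1$ via Lemma \ref{lemma:puntosx}, and delegate the joint conditional-independence verification to the framework of \cite[Chapter 4]{ssMt}. The only cosmetic difference is that the paper fixes the index set $I=\N^2$ explicitly and handles degenerate components by convention rather than by discussing the case $\ell=1$ separately.
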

\begin{proof} Recall that for $i\geq 1$, $T_i$ denotes the subtree of $T$ generated by the $i$-th child of the ancestor, and let $\uprho_i\in S\svn$ be its root. Write $T_{i, \neq 1}$ the sub-subtree 
obtained by pruning  $T_i$ at the first points (if any) at which its decoration equals $1$. If $T_{i, \neq 1}= T_i$, that is if no pruning is performed, 
we set $k(i)=0$. Otherwise, we enumerate  (using some algorithm) the $k(i)\geq 1$ points at which $T_i$ is pruned; we  get $\uprho_{i1}, \ldots, \uprho_{ik(i)}$.
We also set $\ell_{ij}=1$ and write $\uptau_{ij}$ for the fringe decorated subtree rooted at $\uprho_{ij}$ for every $1\leq j\leq k(i)$. For definitiveness we let $\ell_{ij}=0$, $\uprho_{ij}=\uprho_i$ and agree that $\uptau_{ij}$ is degenerate for $j\geq k(i)+1$. 

We next observe that $F\svn$ is obtained by gluing the pruned subtrees $T_{i, \neq 1}$ at the locations $\uprho_i$ on the ancestral segment $S\svn$,
and naturally decorate $F\svn$ with the restriction of $g$. Combining the fact that $T_{\neq 1}$ induces a local decomposition under $\P_x$, for $x\neq 1$,  
and the local decomposition induced by the ancestral segment under $\P$, we can now check that the quantities defined above form indeed  a local decomposition indexed by $I=\N^2$ under $\P$. \end{proof}

\subsection{Branching structure of level sets of individuals} \label{S:Bsll}
Recall that from the point of view of a self-similar Markov tree as  population model, every individual $u\in\U$ is represented by a segment $S_u$ decorated by a function $f_u$ and further endowed with a reproduction process $\eta_u$.
It is natural to decompose the level set of the decoration (at level $1$)  in the whole tree as
\begin{equation} \label{E:decomp}
\mathcal L = \bigcup_{u\in \U} \mathcal L_u,
\end{equation}
where
$$\mathcal L_u=\{t\in S_u: f_u(t)=1\}$$
is the level set for the individual $u$,
and to introduce the subset of individuals
$$\mathbb L = \{u\in \U: \mathcal L_u \neq \emptyset\}.$$
Expressing the total local time $L(1,T)$ in the form
$$L(1,T) = \sum_{u\in \U} L(1,S_u)$$
and applying the Markov property of the decoration on each segment $S_u$, we
 easily deduce from Proposition \ref{P:exlt}  that
\begin{equation}\label{E:Lfinite}
\E(\# \mathbb L) =\mathrm{w}^{(\gamma_0)}(0)/v(0)<\infty.
\end{equation}
 
We  point at the following immediate consequences of Lemmas \ref{L:wellknown} and \ref{lemma:puntosx} combined with  Corollary \ref{C:debut}.
 
\begin{corollary} \label{C:Lnull} The following assertions hold $\P$-a.s.
\begin{enumerate}
\item[(i)]  For every $u\in\U$, the decoration $f_u$ on the segment $S_u$ is continuous on $\mathcal L_u$.
\item[(ii)] $\mathcal L$ is a perfect closed set with zero length measure,  $\uplambda (\mathcal L)=0$.
\item[(iii)] $\mathcal L$ avoids all the branching points of $T$.
\item[(iv)] If $d'$ is the debut of some excursion-subtree $T'$, then there is a unique $u\in \mathbb L$ such that $d'\in \mathcal{L}_u$,
and $T'\cap S_u=\llbracket d', d''\rrbracket$ for some $d''\neq d'$. Recall that  $d''$  is referred to as the end of the excursion-subtree $T'$. 
\end{enumerate}
\end{corollary}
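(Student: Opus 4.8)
The plan is to reduce every assertion to the corresponding statement for the ancestral segment, namely Lemmas~\ref{L:wellknown} and~\ref{lemma:puntosx} and Corollary~\ref{C:debut}, applied on each segment $S_u$ in turn. The key observation is that, by the Markov branching property of the ssMt, conditionally on the germ $\chi(u)$ the decoration-reproduction process $(f_u,\eta_u)$ on $S_u$ is distributed as the ancestral process $(f\svn,\eta\svn)$ started from $\chi(u)$, that is under $\P_{\chi(u)}$. Moreover, the proofs of Lemmas~\ref{L:wellknown} and~\ref{lemma:puntosx} and of Corollary~\ref{C:debut} never use that the initial decoration equals $1$: they only invoke properties of the underlying L\'evy process $\xi$ at the level $0$ (equivalently, of the decoration at level $1$), namely regularity \eqref{E:reg}, the existence of local times \eqref{E:abs}, and the jump-time/Hartman--Wintner argument, all of which are insensitive to the starting point via the strong Markov property at the first hitting time of $0$. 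Hence these statements hold under $\P_x$ for every $x>0$, with the understanding that $\mathcal L_u$ may be empty, in which case they are vacuous. First I would therefore record that, $\P$-a.s. and simultaneously for every $u\in\U$ (a countable intersection of full-probability events), the segment $S_u$ satisfies the analogues of Lemmas~\ref{L:wellknown} and~\ref{lemma:puntosx} with $\mathcal L_u$ in place of $\mathcal L\svn$.

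Granting this, assertion (i) is immediate. For (ii), closedness of $\mathcal L=\mathcal L(1)$ follows from its identification with $\mathrm{Supp}\,L(1,\dd t)$ noted in Section~\ref{sec:construc}. That $\mathcal L$ is perfect follows from the decomposition \eqref{E:decomp}: every $t\in\mathcal L$ lies in some $\mathcal L_u$, which by the analogue of Lemma~\ref{L:wellknown}(ii) is perfect, so $t$ is a non-isolated point of $\mathcal L_u\subseteq\mathcal L$. Finally $\uplambda(\mathcal L)=\sum_{u\in\U}\uplambda(\mathcal L_u)=0$, since the segments overlap only on a countable (hence $\uplambda$-negligible) set of gluing points and each $\uplambda(\mathcal L_u)=0$ by Lemma~\ref{L:wellknown}(ii).

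For (iii), recall that the branching points of $T$ are exactly the attachment points of child segments, i.e. the reproduction times of their parents, and that the value of $g$ at such a point is the value of the parent decoration there. By the analogue of Lemma~\ref{lemma:puntosx}, for each $u$ the reproduction measure $\eta_u$ charges no point of $\mathcal L_u$, so the parent decoration differs from $1$ at every attachment point; thus no branching point belongs to $\mathcal L$. For (iv), since an excursion-subtree $T'$ is the closure of a connected component of the open set $T\setminus\mathcal L$, its debut $d'$ lies on the boundary and hence in $\mathcal L$; by \eqref{E:decomp} we have $d'\in\mathcal L_u$ for some $u\in\mathbb L$, and this $u$ is unique because, by (iii), $d'$ is not a branching point and therefore belongs to the interior of a single segment. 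As $T'\cap S_u$ is a connected subset of $S_u$ having $d'$ as its point closest to $\uprho$, the argument of Corollary~\ref{C:debut} applied to $S_u$ shows $T'\cap S_u=\llbracket d',d''\rrbracket$ with $d''\neq d'$; indeed $d'$ is the left endpoint of one of the non-degenerate excursion intervals of $S_u$ described in Lemma~\ref{L:wellknown}(iii).

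The only genuinely delicate point is the first paragraph: one must verify that the ancestral lemmas transfer to a segment whose decoration starts away from $1$ and assemble them into a single event of full probability valid for all $u\in\U$ at once. The subtlety is that on such a segment the level set may be empty and the relevant zero set of $\xi$ is reached only after a first hitting time, so perfectness and the interval description must be read off after applying the strong Markov property at that time; once this is done, the remaining deductions are routine. A secondary point requiring care is the convention fixing the value of $g$ at a branching point (the parent's value rather than the child's germ), which is precisely what makes the argument for (iii) clean.
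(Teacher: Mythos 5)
Your proof is correct and follows essentially the same route as the paper: both reduce every assertion to the ancestral-segment statements (Lemmas \ref{L:wellknown} and \ref{lemma:puntosx}, Corollary \ref{C:debut}) by applying the Markov/branching property at the first hitting time of level $1$ on each segment $S_u$, and both rely on the a.s.\ finiteness of $\mathbb L$ from \eqref{E:Lfinite}. The only cosmetic difference is in (iv), where the paper locates the relevant individual by descending the genealogy generation by generation until $T'$ first meets some $S_u$, whereas you identify the debut directly as a boundary point of a connected component of $T\setminus\mathcal L$ and invoke (iii) for uniqueness of $u$ --- both arguments are sound.
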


\begin{proof} Fix any $u\in \U$, $u\neq \uprho$. We see from an application of the Markov property of decoration-reproduction processes (see \cite[Lemma 4.7]{ssMt})  at the first hitting time of $1$ for the decoration and Lemma \ref{L:wellknown} that $\mathcal L_u$ is a perfect closed set with length measure $0$ and that $f_u$ is continuous on $\mathcal L_u$. We deduce similarly from Lemma  \ref{lemma:puntosx} that the individual $u$ does not reproduce on $\mathcal L_u$.
As we already know from \eqref{E:Lfinite} that $\mathbb L$ is finite a.s., this proves (ii). 

Turning our attention to (iv), consider any excursion-subtree $T'$. If $T'\cap S\svn\neq \emptyset$, then we can apply  Corollary \ref{C:debut}.
Else, $T'$ is contained in one (and only one) of the children subtrees, $T_j$ for some $j\geq 1$. By the branching property, we can repeat the argument  generation after generation, until we find the first individual, say $u$, such that $T'\cap S_u\neq \emptyset$,
and we apply  Corollary \ref{C:debut}. We see in particular that $u\in \mathbb L$. The remaining assertions follows similarly by the branching property applying Lemmas \ref{L:wellknown} and \ref{lemma:puntosx}. 
\end{proof}

We next observe that $\mathbb L$ inherits a genealogical structure from $\U$, namely for any $u,v\in \mathbb L$, $v$ descends from $u$ if and only if $u\prec v$ in $\U$.
We readily deduce from the local decomposition of Lemma \ref{L:locdecsvn}
that $\mathbb L$ is a Galton--Watson tree. Its reproduction law  can be described in terms of the quantities analyzed in the preceding section, as we shall now explain.

For a generic marked decorated tree $\mathtt T\SB$, write $Z(\mathtt T\SB)$ for the number of points away from the marked segment at which the decoration equals $1$ for the first time.
Recall also the notation $\mathtt T\svn^{(a\bullet)}$ for the chunk of the marked ssMt that results by logging the ancestral segment at $L^{-1}\svn(a-)$ and $L^{-1}\svn(a)$. 
Then the quantity
$$\sum_{a>0} Z(\mathtt T\svn^{(a\bullet)})$$
is precisely the number of children of the ancestor $\varnothing$ in $\mathbb L$. We record the following immediate consequence.

\begin{corollary} \label{C:Zint}  We have
$$\int_{\T\SB} Z(\mathtt{T}\SB) \mathbf N\SB(\dd \mathtt{T}\SB) = \frac{1}{v(0)} -\frac{1}{\mathrm{w}^{(\gamma_0)}(0)}< \frac{1}{v(0)}.$$
\end{corollary}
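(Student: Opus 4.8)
The plan is to compute $\int_{\T\SB} Z\, d\mathbf N\SB$ by recognizing it, up to the factor $v(0)$, as the offspring mean of the Galton--Watson tree $\mathbb L$, and then to recover that offspring mean from the already-known expected total progeny $\E(\#\mathbb L)=\mathrm{w}^{(\gamma_0)}(0)/v(0)$ provided by \eqref{E:Lfinite}. This route bypasses any direct evaluation of the integral against the excursion measure.

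First I would transfer the integral against $\mathbf N\SB$ into an expectation over the excursions logged off the ancestral segment. The defining relation \eqref{E:defQ} says precisely that the intensity measure of the point process $\sum_{a>0}\delta_{R(\mathtt T\svn^{(a\bullet)})}$ on $\T\SB$ is $v(0)\,\mathbf N\SB$; by the standard approximation of a nonnegative measurable function by simple functions, this upgrades to
$$
\E\Big(\sum_{a>0} h\big(R(\mathtt T\svn^{(a\bullet)})\big)\Big)=v(0)\int_{\T\SB} h\, d\mathbf N\SB
$$
for every nonnegative measurable $h$. Since $Z$ only records the points, away from the marked segment, at which the decoration first returns to $1$, and $R$ is exactly the operation of pruning at those points, the count $Z$ is unaffected by the reduction, i.e. $Z\circ R=Z$. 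Taking $h=Z$ therefore gives $v(0)\int_{\T\SB} Z\, d\mathbf N\SB=\E\big(\sum_{a>0} Z(\mathtt T\svn^{(a\bullet)})\big)$.

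It then remains to evaluate the right-hand side. By the observation recorded just before the statement, $\sum_{a>0} Z(\mathtt T\svn^{(a\bullet)})$ is the number of children of $\varnothing$ in $\mathbb L$, so its expectation is the offspring mean $m$ of the Galton--Watson tree $\mathbb L$. Because $\E(\#\mathbb L)<\infty$ by \eqref{E:Lfinite}, the tree is subcritical, $m<1$, and the expected total progeny obeys $\E(\#\mathbb L)=\sum_{n\geq 0} m^n=1/(1-m)$; combined with $\E(\#\mathbb L)=\mathrm{w}^{(\gamma_0)}(0)/v(0)$ this yields $m=1-v(0)/\mathrm{w}^{(\gamma_0)}(0)$. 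Dividing by $v(0)$ produces the announced value $1/v(0)-1/\mathrm{w}^{(\gamma_0)}(0)$, and the strict inequality follows since $\mathrm{w}^{(\gamma_0)}(0)>0$ (a potential density is everywhere positive).

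The computation is short once these ingredients are in place, so I do not expect a serious obstacle; the two points deserving attention are the elementary but crucial identity $Z\circ R=Z$, which is exactly what permits \eqref{E:defQ} to be applied with the unbounded integrand $Z$, and the justification that $\mathbb L$ is genuinely subcritical so that the progeny formula $\E(\#\mathbb L)=1/(1-m)$ is valid --- both of which are immediate from the finiteness in \eqref{E:Lfinite} and from the definitions.
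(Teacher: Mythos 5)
Your proposal is correct and follows essentially the same route as the paper: both pass from $\int Z\,\dd\mathbf N\SB$ to $\E\big(\sum_{a>0}Z(\mathtt T\svn^{(a\bullet)})\big)/v(0)$ via the definition \eqref{E:defQ} of $\mathbf N\SB$ (the paper's phrase ``by definition of $\mathbf N\SB$ and the reduction operator'' is exactly your observation that $Z\circ R=Z$), identify this as the offspring mean of the Galton--Watson tree $\mathbb L$, and recover that mean from $\E(\#\mathbb L)=\mathrm{w}^{(\gamma_0)}(0)/v(0)$ in \eqref{E:Lfinite} through the total-progeny formula. Your write-up merely makes explicit a couple of steps the paper leaves implicit.
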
 
\begin{proof} By definition of $\mathbf N\SB$ and the reduction operator, we have the identity
$$v(0) \int_{\T\SB} Z(\mathtt{T}\SB) \mathbf N\SB(\dd \mathtt{T}\SB)  = \E\left( \sum _{a>0} Z(\mathtt T\svn^{(a\bullet)})
\right).$$
We have argued that the right-hand side is the mean reproduction of the Galton--Watson process with total population $\mathbb L$,
and the stated formula readily follows from \eqref{E:Lfinite}.
\end{proof}

We next turn our attention to the finer branching structure for level sets and not just individuals. 
It is natural to endow $\mathcal L$ with the pseudo-distance induced by the local time measure $L(1, \dd t)$,
$$d_L(s,t)= L(1, \llbracket s, t\rrbracket), \qquad s,t \in \mathcal L,$$
and write $(\tilde{\mathcal L}, \tilde d_L)$ for the metric space that is obtained by identifying any two points $s,t\in \mathcal L$ with $d_L(s,t)=0$. Specifically, for $s,t \in \mathcal L$, we write $s\sim t$ iff $\tilde{d}_L(a,b)=0$, which defines an equivalence relation on $T$.  The space $\tilde{\mathcal L}$ is the quotient space of $\mathcal L$ by this equivalence relation equipped with the metric (induced by) $\tilde{d}_L$ and rooted at the equivalent class of the root of $T$.

Let us discuss the identification a bit further.  Take some individual $u\in \mathbb L$,  and  consider an excursion from the point of view of $u$, say in the obvious notation,
$R(\mathtt T_u^{(a\bullet)})$ for some $a>0$. Then all the points in that excursion tree which belong to the level set $\mathcal L$  are identified one with the other.
More precisely, if $a<L_u(z_u)$ and if $Z(\mathtt T_u^{(a\bullet)})=0$, then the root $\uprho_u^{(a)}$ 
and the marked point $\uprho_u^{(a\bullet)}$ are the only two points in that excursion that belong to $\mathcal L_u$, and they are identified by $d_L$. 
If $a<L_u(z_u)$ again but now  $Z(\mathtt T_u^{(a\bullet)})=j\geq 1$, $j$ further points on the excursion are identified with the root and the marked point, which produces a branch point in $\tilde{\mathcal L}$.
These $j$ points do not belong to $\mathcal L_u$, but rather are given by the first points of level sets $\mathcal L_v$ on the segment $S_v$ for some children $v$ of $u$ in $\mathbb L$.
 Finally, the analysis for an ultimate excursion, that is when $a=L_u(z_u)$, or equivalently when the decoration at the marked point differs from $1$, 
is similar.

 It should now be plain that the identification induced by $d_L$ turns each individual level set $\mathcal L_u$ into a segment $\tilde{\mathcal L}_u$ with length $L_u(z_u)=L(1,S_u)$,
and $\tilde{\mathcal L}$ is a rooted continuous tree with a finite combinatorial structure. Its branching points with outer-degree $j\geq 2$ correspond either to excursions
$\mathtt T_u^{(a\bullet)}$ for $a<L_u(z_u)$ with $Z(\mathtt T_u^{(a\bullet)})=j-1$, or to ultimate excursions $\mathtt T_u^{(a\bullet)}$ for $a=L_u(z_u)$ with $Z(\mathtt T_u^{(a\bullet)})=j$. In turn its leaves 
correspond to ultimate excursions $\mathtt T_u^{(a\bullet)}$ for $a=L_u(z_u)$ with $Z(\mathtt T_u^{(a\bullet)})=0$.
We further stress that ultimate excursions $\mathtt T_u^{(a\bullet)}$ for $a=L_u(z_u)$ with $Z(\mathtt T_u^{(a\bullet)})=1$
are neither leaves nor branching points of $\tilde {\mathcal L}$, but rather ordinary points of outer-degree $1$, even though their root $\uprho_u^{(a)}$ appear as the last point of $\mathcal L_u$.  Also, remark that it follows from the construction that if we denote the length measure of $\tilde{\mathcal{L}}$ by $\uplambda_{\tilde{\mathcal{L}}}$ then we have:
\begin{equation}\label{eq:length:tilde}
\uplambda_{\tilde{\mathcal{L}}}\big(\tilde{\mathcal{L}}\big)= L(1, T), \quad \P\text{-a.s.} 
\end{equation}
\par   Let us stress that since the support of the measure $L(1,\mathrm{d}t )$ is precisely $\mathcal{L}$, we could equivalently obtain $(\tilde{\mathcal L}, \tilde d_L)$  by defining $d_L$ on the tree $T$, instead of solely on $\mathcal{L}$, and by identifying every pair of points $s,t\in T$ such that $L(1, \llbracket s, t\rrbracket)=0$. This second point of view yields that $(\tilde{\mathcal L}, \tilde d_L)$ 
 it's the tree obtained by ''subordinating $T$ by the local time'' in the sense of  \cite{Subor}, see also \cite{BertoinLeGallLeJean}; in particular, by \cite[Proposition 5]{Subor} the metric space $(\tilde{\mathcal L}, \tilde d_L)$ is an  $\mathbb{R}$-tree.

The main result of this section is that $(\tilde{\mathcal L}, \tilde d_L)$ can be seen as a continuous time branching tree
in the following sense.
Recall first from \cite[Chapter III]{AN} that a (one dimensional) continuous (time Markov) branching process
describes  a population where distinct individuals evolve   in continuous time, independently, one from the other and according to the same law.
Its dynamics are determined by a sequence $(\beta_k)_{k\geq 0}$ of nonnegative coefficients  with $\sum_{k\geq 0} \beta_k<\infty$ and called the branching rates. Specifically,  $\beta_k$ is the rate at which a typical individual  is replaced by 
$k$ new individuals. We stress that for $k=1$, the branching rate $\beta_1$ is essentially irrelevant and can be changed arbitrarily, since the replacement of an individual by another has no significant  effects on the model. A continuous time branching process starting from a single ancestor can thus be encoded from a Galton--Watson tree with reproduction law $(\beta_j/(\sum_{k\geq 0} \beta_k): j\geq 0)$ by assigning an independent  exponential length with mean $\sum_{k\geq 0} \beta_k$ to each edge. 
One calls a continuous time branching process subcritical when
\begin{equation} \label{E:sccbp}
 \sum_{k\geq 0}(k-1)\beta_k<0;
\end{equation}
this is the case if and only if the total length of its associated tree has finite expectation. 

It is convenient to introduce, for a generic marked decorated tree $\mathtt T\SB$, the notation  $n(\mathtt T\SB)$ for the number of points different from the root at which the decoration equals $1$ for the first time. So there is the identity
\begin{equation}\label{E:N&Z} 
n(\mathtt T\SB)= \left\{ \begin{matrix} Z(\mathtt T\SB) & 
\text{ if }g(\uprho\SB)\neq 1, \\
Z(\mathtt T\SB)+1 &\text{ if }g(\uprho\SB)=1. \end{matrix} \right.
\end{equation}

\begin{theorem}\label{T:Lcbt} 
Under\: $\P$, $(\tilde{\mathcal L}, \tilde d_L)$ is a sub-critical continuous branching tree with branching rates $(\beta_k)_{k\neq 1}$  given  by 
$$\beta_k= \mathbf{N}\SB\left( n(\mathtt T\SB)=k \right), \qquad \text{for any $k\neq 1$.}$$
\end{theorem}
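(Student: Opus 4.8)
The plan is to realise $(\tilde{\mathcal L},\tilde d_L)$ through the It\^o-type synthesis furnished by Corollary \ref{C:excancestral} and to match it with the Galton--Watson-plus-exponential-edges encoding of a continuous branching tree recalled just before the statement. The starting observation is that, by \eqref{E:N&Z}, the quantity $n(\mathtt{T}\SB)$ is exactly the outer-degree in $\tilde{\mathcal L}$ of the point produced by an excursion $\mathtt{T}\SB$: an excursion with $g(\uprho\SB)=1$ glues the continuation of the ambient individual segment to $Z(\mathtt{T}\SB)$ fresh branches (outer-degree $Z+1$), whereas an ultimate excursion with $g(\uprho\SB)\neq1$ contributes only its $Z(\mathtt{T}\SB)$ fresh branches (outer-degree $Z$). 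Hence a point of outer-degree $k$ comes precisely from an excursion with $n=k$: leaves from $k=0$, genuine branch points from $k\geq2$, while $k=1$ is metrically invisible, since an $\mathbb R$-tree does not record points of degree two.

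First I would set up the recursion along the ancestral segment. By Corollary \ref{C:excancestral} the excursions $R(\mathtt{T}\svn^{(a\bullet)})$ form a homogeneous Poisson measure on $\R_+\times\T\SB$ with intensity $\dd a\times\mathbf{N}\SB$, stopped at the ultimate excursion, whose local time $L\svn(z\svn)$ is exponential with parameter $\mathbf{N}\SB(g(\uprho\SB)\neq1)=1/v(0)$. Combining this with the regenerative property at inverse local times established in the proof of Lemma \ref{L:excancestral}, and with the local decompositions of Lemma \ref{L:locdecsvn} and the one induced by $T_{\neq1}$ under $\P_x$, $x\neq1$, I would show that every subtree emanating from a branch point --- whether a fresh branch created inside an excursion or the regenerative continuation past an inverse local time --- is an independent copy of $(\tilde{\mathcal L},\tilde d_L)$ rooted at that point. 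This step carries all the probabilistic content and is, I expect, the main obstacle: one must verify the \emph{mutual} independence of the continuation and of the finitely many fringe subtrees hanging on a single excursion, splicing together the Poissonian independence of Corollary \ref{C:excancestral} with the conditional independence supplied by the local decompositions.

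Next I would pin down the edge lengths and the offspring law. Thinning the Poisson measure by the label $n$, the \emph{visible} events ($n\neq1$) occur along the ancestral segment at total rate $\lambda:=\mathbf{N}\SB(n\neq1)$ and are independently of type $k$ with probability $\mathbf{N}\SB(n=k)/\lambda$. Here $\lambda<\infty$ follows from Corollary \ref{C:Zint}, because $\{n\geq2\}\subset\{Z\geq1\}$ gives $\mathbf{N}\SB(n\geq2)\leq\int Z\,\dd\mathbf{N}\SB<\infty$, while $\mathbf{N}\SB(n=0)\leq\mathbf{N}\SB(g(\uprho\SB)\neq1)=1/v(0)$. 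The one delicate point is that an ultimate excursion with $n=1$ terminates the current individual yet continues as a single independent copy; by the memoryless property this merely concatenates exponential pieces, so the length from the root to the first visible event is again exponential with parameter $\lambda$. Feeding the i.i.d.\ subtree property of the previous paragraph into this Poissonian description, the recursive law of $\tilde{\mathcal L}$ --- an exponential edge of rate $\lambda$ followed by a branch into $k$ i.i.d.\ copies with probability $\mathbf{N}\SB(n=k)/\lambda$ --- is exactly the encoding of a continuous branching tree with branching rates $\beta_k=\mathbf{N}\SB(n=k)$, $k\neq1$.

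Finally I would verify sub-criticality. Writing $n-1=Z-\mathbf{1}_{\{g(\uprho\SB)\neq1\}}$ from \eqref{E:N&Z} and integrating over $\{n\neq1\}$ (where the integrand $(n-1)$ is absolutely integrable by the bounds above), the identity $\mathbf{N}\SB(g(\uprho\SB)\neq1)=1/v(0)$ together with Corollary \ref{C:Zint} gives
$$\sum_{k\neq1}(k-1)\beta_k=\int(n-1)\,\dd\mathbf{N}\SB=\int Z\,\dd\mathbf{N}\SB-\mathbf{N}\SB(g(\uprho\SB)\neq1)=-\frac{1}{\mathrm{w}^{(\gamma_0)}(0)}<0,$$
which is the sub-criticality condition \eqref{E:sccbp}. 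As a consistency check this value equals $-1/\E(L(1,T))$ by Proposition \ref{P:exlt}(ii), matching \eqref{eq:length:tilde} and the fact that the expected total length of a sub-critical continuous branching tree is $-1/\sum_k(k-1)\beta_k$; alternatively, sub-criticality follows directly from the cited equivalence, since $\E\big(\uplambda_{\tilde{\mathcal L}}(\tilde{\mathcal L})\big)=\E(L(1,T))=\mathrm{w}^{(\gamma_0)}(0)<\infty$.
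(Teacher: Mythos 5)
Your proposal is correct and follows essentially the same route as the paper: the paper first packages the Poisson excursion process of Corollary \ref{C:excancestral} together with the local decomposition of Lemma \ref{L:locdecsvn} into a C--M--J branching tree (Lemma \ref{L:LCMJt}), and then absorbs the ``metrically invisible'' events exactly as you do, via a reincarnation/memorylessness argument that concatenates the exponential pieces and identifies the branching rates through the map $\mathtt{T}\SB\mapsto n(\mathtt{T}\SB)$. Your explicit sub-criticality computation $\sum_{k\neq 1}(k-1)\beta_k=-1/\mathrm{w}^{(\gamma_0)}(0)$ is left implicit in the paper but is correct and consistent with \eqref{eq:length:tilde}.
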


Let us first explain quickly and  informally the two key steps of the proof. We start by observing that $(\tilde{\mathcal L}, \tilde d_L)$ is a sub-critical Crump--Mode--Jagers branching tree. Then taking a slightly different perspective makes us realize that the latter is actually a continuous branching tree.

Recall that  a Crump--Mode--Jagers (in short, C--M--J) branching process, also called a general branching process,  is a model of a population  in continuous time where distinct individuals evolve independently one from the other and according to the same law; see \cite[Section 3.3]{HJV}). Its distribution 
is entirely determined by the a.s. finite lifetime $l$ of a typical individual 
and a point process $\uptheta$ on $[0, l]$, called the reproduction point process, which records the age of the parent and the number of children that this individual begets alongs its life. Note that we allow $\uptheta$ to have an atom at time $l$ (in which case the individual begets children at its death-time). One generally agrees that $\uptheta$ has no atom at time $0$, meaning that an individual cannot beget children at its own birth-time (otherwise
could simply consider such children as sibling of this individual). 
In the subcritical case when $\uptheta([0,l])$ has expectation strictly less than $1$, 
a C--M--J branching process started from a single ancestor can thus be encoded as a compact random tree with a finite combinatorial structure.
\par
In order to simplify the statements, under $\P$, we consider an auxiliary homogeneous Poisson measure on $\R_+\times \T\SB$  with intensity $ \dd t \times \mathbf N\SB$, that we denote by $\upPhi$. In particular, recall from  Corollary \ref{C:excancestral}, that 
$$\sum_{a>0} \delta_{(a,R(\mathtt{T}\svn^{(a\bullet)}))}, $$
is distributed as $\upPhi$ stopped at the first atom $(t, T^\bullet)$ for which the decoration component evaluated at its marked point differs from $1$.

\begin{lemma}\label{L:LCMJt} 
Under $\P$,  $(\tilde{\mathcal L}, \tilde d_L)$ has the same law as a subcritical 
 {\normalfont{C--M--J}} branching tree such that the lifetime and reproduction point process of  a typical individual 
are  distributed as $l$ and $ \uptheta$, where:
\begin{itemize}
\item $l$ is the first atom in $\upPhi$ for which the decoration component evaluated at its marked point differs from $1$;
\item $\uptheta$ is the point process on $[0,l]$ such that $\uptheta$ has an atom  of multiplicity $k\geq 1$ at $t\in[0, l]$ if  and only if 
$\upPhi$ has an atom $(t,\mathtt{T}^{\bullet})$ with 
$Z(\mathtt{T}^{\bullet})=k$.
\end{itemize}
\end{lemma}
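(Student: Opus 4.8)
The plan is to read off the Crump--Mode--Jagers structure from the ancestral segment and then propagate it through the genealogy of $\mathbb L$ via the local decomposition of Lemma~\ref{L:locdecsvn}. Recall that $\tilde{\mathcal L}$ is assembled from the segments $\tilde{\mathcal L}_u$, $u\in\mathbb L$, each of $\tilde d_L$-length $L_u(z_u)=L(1,S_u)$, glued according to the genealogy that $\mathbb L$ inherits from $\U$. In this picture the ancestor $\varnothing$ is a typical individual whose life is parametrized by the local time $a\in[0,L\svn(z\svn)]$ accumulated along the ancestral segment: its lifetime is the total local time $L\svn(z\svn)$, and at age $a$ it begets exactly $Z(\mathtt T\svn^{(a\bullet)})$ children of $\mathbb L$, namely one per point, lying off the marked segment, at which the decoration of the excursion $\mathtt T\svn^{(a\bullet)}$ returns to $1$. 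That these points are precisely the debuts of the level sets of the children of $\varnothing$ in $\mathbb L$ is the content of Corollary~\ref{C:Lnull}(iv).

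First I would identify the law of this (lifetime, reproduction) pair. By Corollary~\ref{C:excancestral}, the point process $\sum_{a>0}\delta_{(a,R(\mathtt T\svn^{(a\bullet)}))}$ is the auxiliary Poisson measure $\upPhi$ stopped at its first atom whose decoration at the marked point differs from~$1$; the local time at which this stopping occurs is exactly $L\svn(z\svn)$, whence $L\svn(z\svn)\egaldistr l$. Reading the value $Z$ off every atom of this stopped measure---including the terminal one---yields, jointly with $l$, precisely the point process $\uptheta$ of the statement. Hence the ancestor's lifetime and reproduction have the announced law $(l,\uptheta)$. The C--M--J convention that $\uptheta$ charges no atom at age $0$ is automatic here: since $\mathcal L\svn$ is perfect and contains $0$ (Lemma~\ref{L:wellknown}(ii)), no excursion interval abuts the root.

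It remains to install the branching independence. Here I would invoke the local decomposition induced by $F\svn$ in Lemma~\ref{L:locdecsvn}: conditionally on the decorated subtree $(F\svn,g_{\mid F\svn})$ and on the roots and germs of its dangling components, the fringe subtrees rooted at the debuts of the children of $\varnothing$ in $\mathbb L$ are independent, each with germ $1$---the germ is $1$ by continuity of the decoration at level-set points, Corollary~\ref{C:Lnull}(i)---and hence with law $\P$. The quotient by $\tilde d_L$ of the level set of each such fringe subtree is therefore an independent copy of $\tilde{\mathcal L}$, attached to $\tilde{\mathcal L}\svn$ at the point corresponding to the child's birth age $a$. Iterating this description generation after generation along $\mathbb L$ is exactly the recursive recipe defining a C--M--J branching tree with typical lifetime $l$ and reproduction $\uptheta$. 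Subcriticality then reduces to $\E(\uptheta([0,l]))<1$; and indeed $\E(\uptheta([0,l]))=\E\big(\sum_{a>0}Z(\mathtt T\svn^{(a\bullet)})\big)=v(0)\int_{\T\SB}Z\,\dd\mathbf N\SB=1-v(0)/\mathrm{w}^{(\gamma_0)}(0)<1$ by Corollary~\ref{C:Zint}.

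I expect the main obstacle to lie in the bookkeeping of this last step: matching the index set of the local decomposition of $F\svn$ with the children of $\varnothing$ in $\mathbb L$, verifying that each child's birth age recorded by $\uptheta$ coincides with its attachment point on $\tilde{\mathcal L}\svn$ after passing to the $\tilde d_L$-quotient, and checking that the terminal (ultimate) excursion is treated on the same footing as the intermediate ones, so that any children born at the very end of the lifetime are correctly encoded as an atom of $\uptheta$ at age $l$.
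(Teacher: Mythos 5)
Your proposal is correct and follows essentially the same route as the paper: identification of the excursion process along the ancestral segment with the stopped Poisson measure via Corollary~\ref{C:excancestral}, subcriticality via Corollary~\ref{C:Zint}, and the branching property via the local decomposition of Lemma~\ref{L:locdecsvn}, with the structural dictionary between excursions and points of $\tilde{\mathcal L}$ taken from the discussion following Corollary~\ref{C:Zint}. You in fact spell out a few steps the paper leaves implicit (the explicit value of $\E(\uptheta([0,l]))$ and the identification of the dangling fringe subtrees with independent copies of the ssMt), which is a welcome addition rather than a deviation.
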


\begin{proof} Recall from Corollary \ref{C:excancestral} that the restriction of $\upPhi$ to $[0,l]\times \T\SB$ has the same law as the  
process of the excursions met on the ancestral segment. Observe that $l$ is a stopping time in the natural filtration of $\upPhi$ and has the exponential distribution with mean $\E(L\svn(z\svn))=v(0)$. In particular, Corollary \ref{C:Zint} entails that 
$$\E(\uptheta([0,l])) =  v(0) \int_{\T\SB} Z(\mathtt{T}\SB) \mathbf N\SB(\dd \mathtt{T}\SB)<1.$$

Next recall the impact of excursions on the structure of $(\tilde{\mathcal L}, \tilde d_L)$ that has been analyzed after Corollary \ref{C:Zint}. 
This leads us to interpret $\uptheta$ as a reproduction process induced by the excursions met on the ancestral segment.
We complete the proof with an appeal to the local decomposition 
stated in Lemma \ref{L:locdecsvn}, from which the branching property of C--M--J processes can be deduced. 
\end{proof}

We can now establish Theorem \ref{T:Lcbt}.

\begin{proof}[Proof of Theorem \ref{T:Lcbt}] 
We start making the following general observation. 
Consider a subcritical C--M--J branching process for which individuals may have children at the very time when they die, that is 
such that the reproduction point process $\uptheta$ may have an atom at the lifetime $l$. Recall also that on the other hand, individuals never beget children at their own birth-time, so $\uptheta(\{0\})=0$ a.s. Now if we imagine that each time an individual dies and simultaneously begets children, we pick one of those children uniformly at random and view it as the reincarnation (i.e. prolongation) of its parent, then we get another C--M--J branching process that generates the same compact random tree, but where typical individuals have a longer lifetime $\hat l$ and a modified reproduction point process $\hat \uptheta$. More precisely, $\hat l$ and  $\hat \uptheta$ can be described as follows.

Introduce a sequence $(l_n,\uptheta_n)_{n\geq 1}$ of i.i.d. copies of $(l,\uptheta)$ and let $N$ be the first index $n$ for which $\uptheta_n$ has no atom at $l_n$; in particular $N$ has a geometric law. For any $n<N$, write  $\uptheta'_n= \uptheta_n - \delta_{l_n}$ (i.e. we remove one of the children born at the death-time). Then $\hat l = l_1+ \cdots + l_N$ and $\hat \uptheta$ is the concatenation of 
the point processes $\uptheta'_n$ for $1\leq n<N$ and of $\uptheta_N$. 
 
By Lemma \ref{L:LCMJt}, we get from elementary concatenation properties of homogeneous Poisson random measures,  that 
  under $\P$, $(\tilde{\mathcal L}, \tilde d_L)$ has the same law as a subcritical 
 {\normalfont{C--M--J}} branching tree such that the lifetime and reproduction point process of  a typical individual 
are (jointly) distributed as $\hat l$ and $\hat \uptheta$, where $\hat l$  is the first time at which $\upPhi$ has an atom $(t,\mathtt{T}^{\bullet})$ with 
$n(\mathtt{T}^{\bullet})=0$ and $\hat \uptheta$ is the restriction of $\uptheta'$ to  $[0,\hat l]$. Here we used that for atoms $(t, \mathtt{T}^{\bullet})$ of $\Phi$, with decoration at the mark point different from $1$, it holds $n(\mathtt{T}^{\bullet})=Z(\mathtt{T}^{\bullet})$. 

We finally observe that the construction of $\hat l$ and $\hat \uptheta$ can be made simpler by an application of the mapping theorem for Poisson measures.
Introduce a family $\upvarphi_k$ for every  $k\neq 1$ of independent homogeneous Poisson point process on $\R_+$ with intensity $\beta_k$ defined in the statement. 
Then we can define $\hat l$ as the first point of $\upvarphi_0$ and $\hat \uptheta$ as the point measure on $[0, \hat l]$ that has an atom at $t$ with multiplicity $k\geq 1$ if and only if 
$\upvarphi_k$ has an atom at time $t$. It is now easy to verify that a  C--M--J branching process 
where a typical individual has lifetime $\hat l$ and reproduction process $\hat \uptheta$ is actually a continuous branching process with subcritical branching rates $(\beta_k)_{k\neq 1}$. 
\end{proof} 

\subsection{Excursions as a Poisson point process} \label{S:ExPPP}
Our final purpose here is to study general excursions in the terminology introduced at the beginning of this section.
Recall from Corollary \ref{C:Lnull}(iv) that the debut of an excursion belongs to a unique individual level set $\mathcal L_u$ for some $u\in \mathbb L$.
It is  further convenient to systematically mark excursion-subtrees at their end point. 
We also point out that for every $u\neq \uprho$, the decoration at the root $\uprho_u$ of the segment $S_u$ differs from $1$, and 
if we write $h_u=\inf\{t\in S_u: f_u(t)=1\}$ for the first hitting time of $1$ on $S_u$
(with our unusual convention that $h_u=z_u$ if $f_u$ avoids $1$ on $S_u$), then the segment $\llbracket \uprho_u, h_u\rrbracket$ is 
part of an excursion subtree whose debut belongs  $\mathcal{L}_v$ for the parent $v$ of $u$ in $\mathbb {L}$.

We first note that the intensity of (marked) excursions can easily be computed.

\begin{lemma} \label{L:intexc} For every measurable set $A\subset \T\SB$, the mean number of excursions of the ssMt in $A$ is
$$\textrm w^{(\gamma_0)}(0)\cdot  \mathbf N\SB(A).$$

\end{lemma}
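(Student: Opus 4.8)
The plan is to reduce the mean number of excursions in the whole tree to the mean number of excursions met on the ancestral segment, the latter being equal to $v(0)\,\mathbf N\SB(A)$ by the very definition \eqref{E:defQ}. The bridge between the two is provided by the branching structure of $\mathcal L$ together with the Markov property of the decoration–reproduction processes and self-similarity.

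First I would invoke Corollary \ref{C:Lnull}(iv): every excursion-subtree has its debut in a unique individual level set $\mathcal L_u$ with $u\in\mathbb L$. Writing $\mathcal E_u(A)$ for the number of excursions whose debut lies in $\mathcal L_u$ and which belong to $A$, this yields the decomposition of the total count as $\sum_{u\in\U}\indset{u\in\mathbb L}\,\mathcal E_u(A)$, with no double counting, since for $u\neq\uprho$ the initial stretch $\llbracket \uprho_u,h_u\rrbracket$ of $S_u$ belongs to the parent's excursion and is therefore accounted for elsewhere. Taking expectations and using linearity, it then suffices to evaluate $\E\big(\indset{u\in\mathbb L}\mathcal E_u(A)\big)$ for each fixed $u$.

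The key step is an identification by self-similarity. For fixed $u$, let $h_u=\inf\{t\in S_u: f_u(t)=1\}$ be the first hitting time of level $1$ by $f_u$; the event $\{u\in\mathbb L\}$ is exactly $\{f_u(h_u)=1\}$, and by Corollary \ref{C:Lnull}(i) the decoration is continuous there, so the value $1$ is genuinely attained. Applying the Markov property of the decoration–reproduction processes at $h_u$ \cite[Lemma 4.7]{ssMt} together with self-similarity (the decoration restarts from exactly $1$), I would argue that, conditionally on $\{u\in\mathbb L\}$ and on the information $\calF_{h_u}$ up to $h_u$, the decorated subtree grafted at $h_u$ is distributed as $(T,g)$ under $\P$. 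Under this identification $\mathcal L_u$ becomes the ancestral level set $\mathcal L\svn$ of the restart, and the excursions with debut in $\mathcal L_u$ become exactly the excursions met on the ancestral segment of the restart. Hence, by \eqref{E:defQ}, on $\{u\in\mathbb L\}$ one has $\E\big(\mathcal E_u(A)\mid \calF_{h_u}\big)=v(0)\,\mathbf N\SB(A)$, so that $\E\big(\indset{u\in\mathbb L}\mathcal E_u(A)\big)=\P(u\in\mathbb L)\,v(0)\,\mathbf N\SB(A)$.

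Summing over $u\in\U$ and using $\sum_{u}\P(u\in\mathbb L)=\E(\#\mathbb L)=\mathrm w^{(\gamma_0)}(0)/v(0)$ from \eqref{E:Lfinite}, the factors $v(0)$ cancel and the mean number of excursions in $A$ equals $\mathrm w^{(\gamma_0)}(0)\,\mathbf N\SB(A)$, as claimed. The main obstacle I anticipate is the careful justification of the restart identification: one must check that on $\{u\in\mathbb L\}$ the post-$h_u$ structure is a genuine fresh ssMt started from $1$, so that the excursions with debut in $\mathcal L_u$ are equidistributed with those on the ancestral segment, and that the marking at the end point and the reduction operator $R$ are compatible with this splicing. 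All of these rest on Corollary \ref{C:Lnull}, the continuity of the decoration at $h_u$, and the branching structure underlying Lemma \ref{L:locdecsvn}.
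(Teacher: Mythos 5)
Your proposal is correct and follows essentially the same route as the paper: decompose the excursion count over the individuals $u\in\mathbb L$ carrying the debuts, apply the Markov property at the first passage of the decoration at level $1$ on $S_u$ to reduce each term to $v(0)\,\mathbf N\SB(A)$ via \eqref{E:defQ}, and sum using $\E(\#\mathbb L)=\mathrm w^{(\gamma_0)}(0)/v(0)$ from \eqref{E:Lfinite}. Your write-up simply makes explicit the restart identification that the paper's one-line argument leaves implicit.
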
 

\begin{proof} Recall  the  definition \ref{E:defQ} and consider any $u\in \U$. An application of the Markov property at the first passage time at level $1$ of the decoration on a segment $S_u$ shows that the conditional expectation given $u\in \mathbb L$ of  number of excursions in $A$ and debut in $S_u$ equals 
$v(0) \mathbf N\SB(A)$. Since by \eqref{E:Lfinite},  the mean of $\#\mathbb L$ is $\mathrm{w}^{(\gamma_0)}(0)/v(0)$, we get the formula of the statement by summation. 
\end{proof}

Using an  obvious notation that was introduced previously for the ancestor $\varnothing$  only, each and every excursion of the ssMt thus appears
as an excursion of the type $R(\mathtt{T}_u^{(a\bullet)})$ for a unique individual $u\in \mathbb L$ and unique $0<a\leq L_u(z_u)$ with
$L_u^{-1}(a-)<L_u^{-1}(a)$.

We now enumerate the individuals in $\mathbb{L}$, say $u(1),\; u(2),\; \ldots,\; u(\#\mathbb{L}),$ and concatenate the excursion processes relative to each of these individuals, in the sense of Corollary~\ref{C:excancestral}.

There are many natural choices for such an indexing. In order to simplify some arguments, we adopt a depth-first order with respect to the local time. More precisely, we define by induction a sequence $(u(i), \mathbb{L}(i))_{1 \leq i \leq \#\mathbb{L}}$ as follows. Set $u(1) := \varnothing$, the root of $\mathbb{L}$, and let $\mathbb{L}(1)$ be the set of all direct descendants of $u(1)$ in $\mathbb{L}$, that is, the points $v \in \mathbb{L}$ such that there is no $v' \in \mathbb{L} \setminus \{\varnothing,v\}$ with $v' \preceq v$. Assume that $(u(i), \mathbb{L}(i))$ has been defined. If $\mathbb{L}(i)$ is empty, the procedure stops. Otherwise, we choose $u(i+1)$ as the smallest element (with respect to the lexicographical order) among those $u \in \mathbb{L}(i)$ that maximize
$$
L\big(1, \llbracket \uprho, \uprho(u) \rrbracket\big),
$$
where $\uprho(u)$ denotes the point of $T$ corresponding to the origin of $S_u$. We then define $\mathbb{L}(i+1)$ as the union of $\mathbb{L}(i) \setminus \{u(i+1)\}$ with the set of all direct descendants of $u(i+1)$ in $\mathbb{L}$. Observe that when the algorithm terminates, all elements of $\mathbb{L}$ have been discovered. 

To define the concatenation of excursions, we first introduce
$$
A(j) := \sum_{i=1}^{j} L_{u(i)}(z_{u(i)}), \qquad j = 0, \ldots, \#\mathbb{L}.
$$
In particular, $A(\#\mathbb{L}) = L(1,T)$ is the total local time at level~$1$, while $A(0)=0$. We then define the concatenated excursion process  $\mathcal{E}$ as the point process on $[0, L(1,T)]\times  \T\SB$ given by
$$\mathcal{E} = \sum_{j=1}^{\#\mathbb{L}} \sum_{a>0} \delta_{(A(j-1)+a, R(\mathtt{T}_{u(j)}^{(a\bullet)}))}.$$
Roughly speaking, $\mathcal{E}$ is a natural (though not canonical) way of  ordering the excursions of the ssMt by a random countable subset of $\R_+$.

\par

Our main goal here is to show that $\mathcal{E}$ has simple description in terms of the remarkable Poisson random measure  $\upPhi$    introduced before 
Lemma \ref{L:LCMJt} ; we recall that $\upPhi$ is a homogeneous Poisson measure on $\R_+\times \T\SB$  with intensity $ \dd t \times \mathbf Q\SB$. We  also consider the integer-valued additive functional
\begin{equation}\label{eq:def:F:expl}
F(t) = \int_{[0,t]\times \T\SB} \left( n(\mathtt{T}\SB)-1\right) \upPhi(\dd s, \dd \mathtt{T}\SB), \quad t\geq 0,
\end{equation}
where we recall that $n(\mathtt T\SB)$ denotes the number of points different from the root at which the decoration equals $1$ for the first time.
In particular $n(\mathtt{T}\SB)-1=-1$ if and only if  the decoration avoids $1$ aside at the root, and $n(\mathtt{T}\SB)-1=0$ when the root and the marked point are the only two locations on $\mathtt{T}\SB$  at which the decoration is $1$.

\begin{theorem}\label{T:EPPP} Under $\P$, the excursion process $\mathcal{E}$ has the same law as the homogeneous Poisson measure $\upPhi$
restricted to $[0,\varphi]\times \T\SB$, where $\varphi=\inf\{t\geq 0: F(t)=-1\}$. 
 \end{theorem}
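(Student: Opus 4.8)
The plan is to read the functional $F$ as a Łukasiewicz‑type exploration path of the Galton--Watson tree $\mathbb{L}$ and to match the depth‑first concatenation $\mathcal{E}$ with $\upPhi$ stopped at $\varphi$ through a \emph{common regenerative decomposition}, the matching being forced by the sub‑criticality of Corollary~\ref{C:Zint}. Informally, the exploration of a single individual of $\mathbb{L}$ contributes to $F$ a net increment equal to its number of children in $\mathbb{L}$ minus one, so that $F$ first reaches $-1$ exactly when the whole of $\mathbb{L}$ has been discovered.

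First I would record the elementary structure of $F$. Since $n(\mathtt T\SB)\ge 0$, every jump of $F$ is at least $-1$, and a jump equal to $-1$ occurs precisely at an atom with $n(\mathtt T\SB)=0$, i.e.\ an ultimate excursion that never returns to $1$ away from its root. The set $\{n\neq 1\}$ carries finite $\mathbf N\SB$‑mass, because $\{n\neq1\}\subseteq\{Z\ge1\}\cup\{g(\uprho\SB)\neq1\}$, with $\mathbf N\SB(Z\ge1)\le\int Z\,\dd\mathbf N\SB<\infty$ by Corollary~\ref{C:Zint}, while $\mathbf N\SB(g(\uprho\SB)\neq1)=1/v(0)$ since, by Corollary~\ref{C:excancestral}, the first ultimate atom of $\upPhi$ occurs at an exponential local time of mean $\E(L\svn(z\svn))=v(0)$. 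Hence $F$ is an integer‑valued compound Poisson process which is downward skip‑free, so it reaches $-1$ by a single unit down‑jump from $0$; in particular $\varphi$ is an honest $\upPhi$‑stopping time, a.s.\ finite because $F$ has negative drift $-1/\mathrm w^{(\gamma_0)}(0)$.

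Next I would decompose $\upPhi|_{[0,\varphi]}$. Let $\sigma_1$ be the local time of the first ultimate atom of $\upPhi$. By the mapping theorem and the strong Markov property, $\upPhi|_{[0,\sigma_1]}$ is a $\mathrm{PPP}(\dd t\times\mathbf N\SB)$ stopped at its first ultimate atom, while the process after $\sigma_1$ is an independent fresh copy of $\upPhi$. Writing $C=\int_{[0,\sigma_1]\times\T\SB}Z\,\dd\upPhi$ for the number of returns to $1$ revealed on this block, and splitting the integrand over $\{g(\uprho\SB)=1\}$ (all atoms before $\sigma_1$, contributing $n-1=Z$) and $\{g(\uprho\SB)\neq1\}$ (the atom at $\sigma_1$, contributing $n-1=Z-1$), one finds $F(\sigma_1)=C-1$. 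Since $F$ is downward skip‑free, its passage after $\sigma_1$ down to level $-1$, a further descent of $C$ units, factorizes by iterating the strong Markov property at successive integer‑passage times into $C$ independent copies of the first passage of a fresh $F$ to $-1$, each carrying an independent restriction $\upPhi^{(i)}|_{[0,\varphi^{(i)}]}$. This gives
$$\upPhi|_{[0,\varphi]}\ \egaldistr\ \big(\upPhi|_{[0,\sigma_1]}\big)\ \oplus\ \bigoplus_{i=1}^{C}\big(\upPhi^{(i)}|_{[0,\varphi^{(i)}]}\big),$$
where $\oplus$ denotes concatenation along the local‑time axis, each block being translated to begin where the previous one ends, and the $\upPhi^{(i)}$ are i.i.d.\ copies of $\upPhi$ independent of the first block given $C$.

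Finally I would establish the same decomposition for $\mathcal{E}$ and conclude. By Corollary~\ref{C:excancestral} the ancestral block $\sum_{a>0}\delta_{(a,R(\mathtt T\svn^{(a\bullet)}))}$, which fills the interval $[0,A(1)]$, has exactly the law of $\upPhi|_{[0,\sigma_1]}$, and the number $C_\varnothing=\sum_{a>0}Z(\mathtt T\svn^{(a\bullet)})$ of children of $\varnothing$ in $\mathbb{L}$ has the law of $C$. Applying the Markov property of the decoration--reproduction processes at the first hitting time of $1$, equivalently the local decomposition of Lemma~\ref{L:locdecsvn}, the $C_\varnothing$ fringe subtrees rooted at these new returns to $1$ are i.i.d.\ copies of $(T,g)$ under $\P$ and are independent of the ancestral block; hence their excursion processes are i.i.d.\ copies of $\mathcal{E}$. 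Because the depth‑first rule explores each sibling subtree to completion before moving to the next, and because sibling subtrees are exchangeable so that their order is irrelevant for the law, $\mathcal{E}$ obeys the very same regenerative identity as $\upPhi|_{[0,\varphi]}$, with identical ingredients. As $\E(C)=v(0)\int Z\,\dd\mathbf N\SB<1$, the tree $\mathbb{L}$ is a.s.\ finite, the associated distributional fixed‑point equation has a unique solution, and an induction with base case $C=0$ (where $\varphi=\sigma_1$ and $\mathcal{E}$ reduces to the ancestral block) yields $\mathcal{E}\egaldistr\upPhi|_{[0,\varphi]}$. I expect the main difficulty to lie precisely in this matching: one must check that the gluing of blocks dictated by the variables $A(j)$ coincides with the strong‑Markov shifts of $\upPhi$, and that the $C$ descending excursions of $F$ are in bijection with the $C$ sibling subtrees explored depth‑first. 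The delicate point is that it is the depth‑first (last‑in--first‑out) ordering, and not, say, a breadth‑first enumeration of $\mathbb{L}$, that realizes this bijection, since only the former linearizes $\mathbb{L}$ in the order in which $F$ descends to its successive running minima.
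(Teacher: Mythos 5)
Your proposal is correct and follows essentially the same route as the paper: both rest on Corollary~\ref{C:excancestral} for the ancestral block, the local decomposition of Lemma~\ref{L:locdecsvn} for the branching step, the depth-first ordering, and concatenation of homogeneous Poisson measures. The only difference is that you spell out the identification of $\varphi$ via the downward skip-free walk $F$ and a regenerative fixed-point argument, a step the paper dismisses as ``standard in the context of exploring a discrete planar tree by a walk''; your computation $F(\sigma_1)=C-1$ and the drift $-1/\mathrm{w}^{(\gamma_0)}(0)$ are consistent with \eqref{E:N&Z} and Corollary~\ref{C:Zint}.
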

 \begin{proof} 
 Recall from Corollary~\ref{C:excancestral} that, if $l$ denotes the time of the first atom of $\upPhi$ whose decoration component, evaluated at its marked point, differs from~$1$, then the restriction of $\upPhi$ to $[0,l]\times \T\SB$ has the same law as the point process of excursions encountered along the ancestral segment. In this setting, it follows from~\eqref{E:N&Z} and from the observation made just before Corollary~\ref{C:Zint} that $F(l)$ can be interpreted as the number of children of the ancestor in $\mathbb{L}$.

We then continue the exploration with the excursions encountered along the segments corresponding to the individuals of the first generation in $\mathbb{L}$. We begin with the individual $u \in \mathbb{L}$ of the first generation that maximizes $L\big(1, \llbracket \uprho, \uprho(u) \rrbracket\big)$, together with its progeny; in the case of a tie, we select the individual that is smallest with respect to the lexicographical order on $\mathbb{L}$.  This is precisely where the indexing $u(1), \ldots, u(\#\mathbb{L})$ plays a role. The local decomposition of Lemma~\ref{L:locdecsvn} implies that, conditionally on the size of this first generation, say $k$, the $k$ excursion processes associated with the first-generation individuals are independent and identically distributed, each having the same law as the point process of excursions encountered along the ancestral segment.

We proceed iteratively, exploring always in the depth-first order with respect to the local time, until all individuals in $\mathbb{L}$ have been explored. By the same concatenation property of homogeneous Poisson random measures that was already used in the proof of Theorem~\ref{T:Lcbt}, this allows us to view the entire excursion process $\mathcal{E}$ as the restriction of the homogeneous Poisson measure $\upPhi$ to $[0,\varphi]\times \T\SB$, for a suitable stopping time~$\varphi$. The explicit formula for $\varphi$ given in the statement is standard in the context of exploring a discrete planar tree by a walk.
 \end{proof}

We now apply Theorem~\ref{T:EPPP} to describe the conditional distribution of $\mathcal{E}$ given $(\tilde{\mathcal L}, \tilde d_L)$. To this end, recall first that $L(1,T)$ coincides with the total length of $\tilde{\mathcal{L}}$.  Consider an indexing $(\tilde{s}_i)_{1 \leq i \leq M}$ of the points of $\tilde{\mathcal{L}}$ that are distinct from the root and whose multiplicity is different from~$2$, and assume that this indexing is measurable with respect to $(\tilde{\mathcal L}, \tilde d_L)$. Recall that the multiplicity $m(\:\tilde{s}\:)$ of a point $\tilde{s} \in \tilde{\mathcal{L}}$ is defined as the number of connected components of $\tilde{\mathcal{L}} \setminus \big\{\tilde{s}\big\}$.  From the discussion following Corollary \ref{C:Zint}, to each $\tilde{s}_i$ corresponds a unique $R(\mathtt{T}_u^{(a_\bullet)})$ for which $n(\mathtt{T}_u^{(a_\bullet)}) \neq 1$, and for simplicity we denote it by $e_i$. In particular, for every $1 \leq i \leq M$, the root of $e_i$ corresponds in $\tilde{\mathcal{L}}$ to the point $\tilde{s}_i$, and we have $n(e_i) = m(\tilde{s}_i) - 1.$

\begin{corollary}\label{cor:final} Under $\P$,  conditionally on  $(\tilde{\mathcal L}, \tilde d_L)$:
 \begin{enumerate}
        \item The excursions $(e_i)_{1\leq i\leq M}$  are independent, and with respective distributions 
        \begin{equation*}
            \mathbf N\SB\big( \,  \cdot \,  \big|\: n(\mathtt{T}^\bullet) = {{m}}(\tilde{s}_i)-1\big),   \qquad  \text{for }  1\leq i\leq M. 
        \end{equation*}
        \item The point measure $\mathcal{E}(\mathrm{d}t \, \mathrm{d} \mathtt{T}^{\bullet})\mathbf{1}_{n(\mathtt{T}^{\bullet}) = 1}$ is independent of $\mathcal{E}(\mathrm{d}t \, \mathrm{d} \mathtt{T}^{\bullet})\mathbf{1}_{n(\mathtt{T}^{\bullet}) \neq 1}$ , and distributed as a Poisson point measure with intensity $$\mathbf{1}_{[0,L(1,T)]}(t)\mathrm{d} t \times \mathbf N\SB( \, \cdot \cap \{n(\mathtt{T}^\bullet) =1\}).$$ 
\end{enumerate}
 \end{corollary}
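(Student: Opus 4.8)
The plan is to derive both statements from the description of $\mathcal{E}$ furnished by Theorem~\ref{T:EPPP}, exploiting the crucial fact that excursions with $n(\mathtt{T}\SB)=1$ contribute $0$ to the functional $F$ in \eqref{eq:def:F:expl} and hence do not influence the stopping time $\varphi$. Write $\{n=1\}$ and $\{n\neq1\}$ for the two disjoint measurable subsets of $\T\SB$ cut out by the value of $n(\mathtt{T}\SB)$, and split the Poisson measure of Theorem~\ref{T:EPPP} as $\upPhi=\upPhi^{(1)}+\upPhi^{(\neq1)}$ along this partition of the mark space. By the restriction property of Poisson measures, $\upPhi^{(1)}$ and $\upPhi^{(\neq1)}$ are independent homogeneous Poisson measures with intensities $\dd t\times\mathbf{N}\SB(\,\cdot\cap\{n=1\})$ and $\dd t\times\mathbf{N}\SB(\,\cdot\cap\{n\neq1\})$. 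Since $n(\mathtt{T}\SB)-1=0$ on $\{n=1\}$, the integrand in \eqref{eq:def:F:expl} vanishes there, so $F$, and therefore $\varphi=\inf\{t:F(t)=-1\}$, are measurable functionals of $\upPhi^{(\neq1)}$ alone; moreover $\varphi=L(1,T)$.

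First I would record the measurability backbone. Keeping only the time and the value of $n$ at each atom of $\mathcal{E}^{(\neq1)}:=\mathcal{E}\,\mathbf{1}_{\{n\neq1\}}$ yields a reduced point process $\overline{\mathcal E}=\sum_i\delta_{(t_i,n_i)}$ on $[0,L(1,T)]\times\{0,2,3,\dots\}$. The depth-first reconstruction underlying the proof of Theorem~\ref{T:EPPP} shows that $(\tilde{\mathcal L},\tilde d_L)$ is a deterministic functional of $\overline{\mathcal E}$: the $n_i$ encode the offspring numbers through the walk $F$, while the gaps between successive atom times give the edge lengths. Conversely, the tree-measurable indexing $(\tilde s_i)$ recovers the positions $t_i$ from the metric together with the depth-first order, and $n_i=m(\tilde s_i)-1$. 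Hence $\sigma(\tilde{\mathcal L},\tilde d_L)=\sigma(\overline{\mathcal E})$, so conditioning on the tree is the same as conditioning on $\overline{\mathcal E}$.

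For part (2), conditionally on $\upPhi^{(\neq1)}$ the value $\varphi=L(1,T)$ is frozen, and since $\upPhi^{(1)}\perp\upPhi^{(\neq1)}$ the measure $\mathcal E^{(1)}:=\mathcal E\,\mathbf{1}_{\{n=1\}}\egaldistr\upPhi^{(1)}|_{[0,\varphi]}$ is, given $\upPhi^{(\neq1)}$, a Poisson measure with intensity $\mathbf{1}_{[0,\varphi]}(t)\,\dd t\times\mathbf{N}\SB(\,\cdot\cap\{n=1\})$. This conditional law depends on $\upPhi^{(\neq1)}$ only through $\varphi=L(1,T)$, which is a function of the tree; by the tower property it is therefore also the conditional law of $\mathcal E^{(1)}$ given $(\tilde{\mathcal L},\tilde d_L)$, and the fact that it depends on $\mathcal E^{(\neq1)}$ solely through this tree-measurable quantity yields the asserted conditional independence of $\mathcal E^{(1)}$ and $\mathcal E^{(\neq1)}$.

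For part (1) I would invoke the disintegration (marking) property of the Poisson measure $\upPhi^{(\neq1)}$. Since $(\beta_k)_{k\neq1}$ are the branching rates of the subcritical tree of Theorem~\ref{T:Lcbt}, the measure $\mathbf{N}\SB(\,\cdot\cap\{n\neq1\})$ is finite, with disintegration $\sum_{k\neq1}\beta_k\,\mathbf{N}\SB(\,\cdot\mid n=k)$ along the map $\mathtt{T}\SB\mapsto n(\mathtt{T}\SB)$; consequently, conditionally on $\overline{\mathcal E}$ the excursion marks of $\mathcal E^{(\neq1)}$ are independent, the one sitting at time $t_i$ being distributed as $\mathbf{N}\SB(\,\cdot\mid n=n_i)$. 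The restriction to $[0,\varphi]$ is harmless because $\varphi$ is $\overline{\mathcal E}$-measurable. Identifying the atom at $t_i$ with the excursion $e_i$ attached to $\tilde s_i$, and using $\sigma(\tilde{\mathcal L},\tilde d_L)=\sigma(\overline{\mathcal E})$ together with $n_i=m(\tilde s_i)-1$, gives exactly the stated conditional laws. The main obstacle is the bookkeeping behind the measurability identity $\sigma(\tilde{\mathcal L},\tilde d_L)=\sigma(\overline{\mathcal E})$ and the clean interplay between the Poisson disintegration and the $\overline{\mathcal E}$-measurable stopping time $\varphi$; once these are in place, everything reduces to standard Poisson-measure manipulations.
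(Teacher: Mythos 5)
Your proposal is correct and follows essentially the same route as the paper's proof: both condition on the reduced point process recording the times and the values of $n$ at the atoms with $n\neq 1$ (your $\overline{\mathcal E}$ is the paper's $\tilde{\mathcal E}$), exploit the Poisson marking/restriction properties of $\upPhi$ together with the fact that atoms with $n(\mathtt T\SB)=1$ do not affect $F$ and hence $\varphi=L(1,T)$, and conclude by showing that $(\tilde{\mathcal L},\tilde d_L)$ is a functional of this reduced process and applying the tower property. The only cosmetic difference is that you assert the full sigma-field equality $\sigma(\tilde{\mathcal L},\tilde d_L)=\sigma(\overline{\mathcal E})$, whereas the paper only needs (and only proves, via an explicit depth-first reconstruction) the inclusion making the tree $\tilde{\mathcal E}$-measurable, combined with the tree-measurability of $L(1,T)$ and of the $m(\tilde s_i)$.
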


\begin{proof}
Let  $\tilde{\mathcal{E}}$ be the image  measure of $\mathcal{E}\mathbf{1}_{\{n(\mathtt{T}^{\bullet}) \neq  1\}}$ by the map $(t, \mathtt{T}^\bullet)\mapsto \big(t, n(\mathtt{T}^\bullet)\big)$. Remark that the number of atoms of $\tilde{\mathcal{E}}$ is precisely $M$ and  let  $(t_i,k_i)_{1\leq i\leq M}$ be the list of these atoms  indexed in increasing  order with respect to their first coordinate, i.e. $t_1<t_2<\dots< t_M$. By Theorem~\ref{T:EPPP}, conditionally on $\tilde{\mathcal{E}}$:
\begin{itemize}
\item 
The point measure $\mathcal{E}\mathbf{1}_{\{n(\mathtt{T}^{\bullet}) \neq  1\}}$
is a marked  point measure.
More precisely, conditionally on $\tilde{\mathcal{E}}$, let
$(\mathtt T_i^\bullet)_{1\leq i\leq M}$ be independent marked decorated trees such that
$\mathtt T_i^\bullet$ has distribution
$\mathbf N \SB(\,\cdot\,|\, n(\mathtt{T}^\bullet)=k_i)$.
Then, conditionally on $\tilde{\mathcal{E}}$, the point measure
$\mathcal{E}$ has the same law as
$\sum_{1\leq i\leq M} \delta_{(t_i,\mathtt T_i^\bullet)}$.

\item The point measure
$\mathcal{E}(\mathrm{d}t\,\mathrm{d}\mathtt{T}^\bullet)
\mathbf{1}_{\{n(\mathtt{T}^{\bullet}) = 1\}}$
is independent of $\mathcal{E}(\mathrm{d}t\,\mathrm{d}\mathtt{T}^\bullet)
\mathbf{1}_{\{n(\mathtt{T}^{\bullet}) \neq 1\}}$ and of
$\tilde{\mathcal{E}}$, and is distributed as a Poisson point measure with
intensity
$$\mathbf{1}_{[0,L(1,T)]}(t)\,\mathrm{d}t\times
\mathbf N \SB (\,\cdot\,\cap \{n(\mathtt{T}^\bullet)=1\}).$$
\end{itemize}
Since $L(1,T)$ is the total length of $\tilde{\mathcal{L}}$ and
$n(e_i)=m(\tilde{s}_i)-1$ for $1\leq i\leq M$, 
it only remains to show that $(\tilde{\mathcal L}, \tilde d_L)$ is a functional of
$\tilde{\mathcal{E}}$. Let us now explain how to construct $\tilde{\mathcal{L}}$ inductively  using  $\widetilde{\mathcal{E}}$. We start with a root segment of length $t_1$ that we consider as active.  Next depending on the value of $k_1$ we proceed as follows:
\begin{itemize}
\item if $k_1=0$, the segment ends without generating further children; in particular, the extremity of the segment of length $t_1$ is a leaf;
\item if $k_1\ge 2$, we create a branching point at the extremity of the segment of length $t_1$, and  $k_1$ new active segments are grafted. 
\end{itemize} In both cases, the initial segment becomes inactive. 
The construction continues by exploring the next active segment in
depth--first order and by recursively iterating this procedure until there is no active segments 
left. Namely, the algorithm stops after processing the atom with
entry $t_M$, and it always explores the active tip that is last in the
depth--first ordering. Clearly,   this procedure produces a finite compact
$\mathbb{R}$--tree. By construction, the number of branching points correspond to   the cardinality of $\{1\leq  i\leq M : k_i \geq 2 \}$, the multiplicity of  branching points are given by the entries $k_i + 1$ with $k_i \geq 2$, and the number of (non-root) leaves is given by the cardinality of $\{ 1\leq i \leq M : k_i = 0 \}$. By the discussion following Corollary~\ref{C:Zint} and the definition of the indexation $u(1),\dots, u(\#\mathbb{L})$,  the tree we just constructed is 
precisely $(\tilde{\mathcal{L}},\tilde d_L)$.  We leave these  verifications to
the reader, as they are straightforward and closely parallel the
C--M--J description of Section~\ref{S:Bsll}. This concludes the proof of the corollary.
\end{proof}

We conclude this section by briefly discussing the relationship between the present results and those obtained in
\cite{RRO1,RRO2}. In the later works, the object of study is a continuous Markov process, 
indexed by a (fractal)  Lévy tree $\tau$ - the family of random trees which arise as the scaling limit of (sub-)critical Galton--Watson trees. Informally, such a process $(Y_t:t\in\tau)$ can be described as follows: conditionally on the
underlying random tree $\tau$, the process evolves as a Markov process along each branch and splits into independent copies at every branching point.  In order to guarantee the existence of a non-trivial local time,  the existence of a fixed point $x$ of the state space which is  regular and instantaneous for  the Markov process is further assumed.
\par 
A central objective of \cite{RRO1} is the construction of a measure 
$A(x,\mathrm dt)$ supported on the level set $\{t\in\tau:Y_t=x\}$. In contrast with the present work, this construction
is technical, and the resulting measure is supported on the set of leaves of the tree - instead of the skeleton. The law of the motion along
the segment connecting the root of $\tau$ to a point sampled according to $A(x,\mathrm dt)$ is also identified, and exhibits
similarities with the spinal descriptions obtained in Section~\ref{sec:decora:spine}. The level set $\{t\in\tau:Y_t=x\}$ thus plays a role analogous to that of $\mathcal L$ in the present setting. As in this
work, its structure is analyzed by introducing a tree $\widetilde{\tau}$, which is obtained by identifying all pairs of
points $s,t\in\tau$ such that $A(x,\llbracket s,t\rrbracket)=0$. It is shown that $\widetilde{\tau}$ is itself a
(fractal) Lévy tree, whereas in the present setting $(\widetilde{\mathcal L},\widetilde d_L)$ is an essentially discrete Lévy tree. In both frameworks, the resulting tree satisfies the branching property in the sense of
\cite{Weill}.

The second work \cite{RRO2} is devoted to the study of excursions away from   $x$. It is shown
that, when properly indexed using the measure $A(x,\mathrm dt)$, the associated point process of excursions
has a Poissonian structure, yielding an analogue of the classical It\^o excursion theory.
Moreover, the conditional law of the excursions given $\widetilde{\tau}$ is identified by methods that are closely
related in spirit to those employed in the present work.

In particular, the role played here by the measure $\widetilde{\mathcal E}$ is taken over in \cite{RRO2} by a measure
$\widetilde{\mathcal M}$, which records the boundary sizes of excursion components. These quantities play a role
analogous to $n(\mathtt T^\bullet)$ in the present framework, and $\widetilde{\mathcal M}$ is obtained as a
push-forward of the excursion measure through a suitable projection. Defining the notion of excursion boundary size
is delicate, and the reader is referred to \cite{RRO2} for details. Finally, the functional \eqref{eq:def:F:expl} admits a natural analogue in \cite{RRO2} in the form of a
Lévy process obtained from $\widetilde{\mathcal M}$ via the Lévy--It\^o decomposition. 
Namely, it consists of the Lévy process which encodes  the Lévy tree $\widetilde{\tau}$.   Lastly, it is worth mentioning that the  key role played by local decompositions here is taken over in  \cite{RRO1,RRO2} by the so-called special Markov property.

\newpage 

\section*{Index of main notation}
\noindent \textbf{Lévy processes and self–similar Markov processes}
\vspace{-0.5cm}
\begin{center}
\begin{tabular}{p{0.25\textwidth}p{0.7\textwidth}}
$\xi=(\xi_t)_{t\ge0}$  &Lévy process (possibly killed)\\
$\zeta$             & Lifetime of $\xi$ (killing time, possibly $\infty$)\\
$P_x$               & Law of $\xi$ started from $x$; $P=P_0$\\
$\psi$              & Lévy exponent of $\xi$: $E(\e^{\gamma\xi_t}\ind{t<\zeta})=\e^{t\psi(\gamma)}$\\
$\Lambda$           & Lévy measure of $\xi$ on $[-\infty,\infty)$ (mass at $-\infty$ encodes killing rate)\\
$\sigma^2,\mathrm a$& Gaussian coefficient and drift of $\xi$\\
$\ell(y,t)$         & Local time at level $y$ for $\xi$ up to time $t$\\
$\ell(y,\dd t)$     & Corresponding local time Stieltjes measure on $\R_+$\\
$H_y$               & First hitting time of level $y$ by $\xi$\\
$K_y$               & Last passage time at level $y$\\
$v(y)$              & Potential density: $v(y)=E(\ell(y,\infty))$\\
$X=(X_t)_{0\le t<z}$ & Positive self–similar Markov process obtained from $\xi$ via Lamperti transformation\\
$z$                 & Lifetime of $X$: $z=\int_0^\zeta \exp(\alpha\xi_s)\,\dd s$\\
\end{tabular}

\end{center}

\noindent \textbf{Characteristic quadruplet and cumulant}
\vspace{-0.5cm}
\begin{center}
\begin{tabular}{p{0.25\textwidth}p{0.7\textwidth}}
$(\sigma^2,\mathrm a,\boldsymbol{\Lambda};\alpha)$
                    & Characteristic quadruplet defining a ssMt\\
$\boldsymbol{\Lambda}$ & Generalized Lévy measure on $[-\infty,\infty)\times\calS_1$\\
$\calS_1$           & Space of non–increasing sequences $\mathbf y=(y_n)_{n\ge1}$ with $y_n\in[-\infty,\infty)$ and $y_n\to-\infty$\\
$\boldsymbol{\Lambda}_0$ & Projection of $\boldsymbol{\Lambda}$ on $[-\infty,\infty)$ (Lévy measure of $\xi$)\\
$\boldsymbol{\Lambda}_1$ & Projection of $\boldsymbol{\Lambda}$ on $\calS_1$\\
$\kappa$            & Cumulant associated with $(\sigma^2,\mathrm a,\boldsymbol{\Lambda};\alpha)$\\
$\gamma_0>0$        & Parameter such that $\kappa(\gamma_0)<0$ (subcriticality assumption)\\
$\kappa^{(\gamma)}$ & Shifted exponent: $\kappa^{(\gamma)}(\cdot)=\kappa(\gamma+\cdot)$\\
$\mathrm{w}^{(\gamma)}$ & Potential density associated with $\kappa^{(\gamma)}$\\
\end{tabular}
\end{center}
\noindent \textbf{Conditioning on terminal value and tilting}
\vspace{-0.5cm}
\begin{center}
\begin{tabular}{p{0.25\textwidth}p{0.7\textwidth}}
$P_{x,y}$           & Law of $\xi$ started at $x$ and conditioned to have terminal value $y$ (killed at $K_y$)\\
$P^{(\beta)}$       & $\beta$–Esscher transform of $P$: Lévy exponent $\psi^{(\beta)}(\cdot)=\psi(\beta+\cdot)$\\
$P_x^{(\beta)}$     & Law of the tilted process started from $x$\\
$v^{(\beta)}(x)$    & Potential density under $P^{(\beta)}$: $v^{(\beta)}(x)=\e^{\beta x}v(x)$\\
$\hat P_x$          & Law of the dual process $-\xi$ started from $x$\\
$Q_{1,x}$           & Law of the pssMp with exponent $\kappa^{(\gamma_0)}$ started at $1$ and conditioned to end at $x$\\
$\hat Q_x$          & Law of the dual pssMp with exponent $\hat\kappa^{(\gamma_0)}(\cdot)=\kappa(\gamma_0-\cdot)$ started at $x$\\
$\hat Q_{x,1}$      & Law of the dual pssMp started at $x$ and conditioned to end at $1$\\
\end{tabular}
\end{center}

\newpage

\noindent \textbf{Trees and genealogy}
\vspace{-0.5cm}
\begin{center}
\begin{tabular}{p{0.25\textwidth}p{0.7\textwidth}}
$(T, d_{T},\uprho)$                 & Rooted compact real tree\\
$\uprho$            & Root of $T$\\
$\U$                & Ulam tree $\bigcup_{n\geq0}\N^n$ indexing individuals in the population\\
$u\in\U$            & Label of an individual \\
$S_u$               & Oriented segment of $T$ associated with individual $u$.\\
$|S_u|$             & Length of $S_u$, i.e.\ lifetime of individual $u$.\\
$\llbracket s,t\rrbracket$ & Geodesic segment in $T$ between $s$ and $t$.\\
$s\prec t$          & $s$ belongs to the open segment $\llbracket\uprho,t\llbracket$ (ancestor of $t$).\\[0.5em]
\end{tabular}
\end{center}

\noindent \textbf{Self–similar Markov trees (ssMt)}
\vspace{-0.5cm}
\begin{center}
\begin{tabular}{p{0.25\textwidth}p{0.7\textwidth}}
ssMt                & Self–similar Markov tree\\
$(T,g)$             &  real tree $T$ with decoration $g$\\
$f_u$               & Decoration along the segment $S_u$\\
$\eta_u$            & Reproduction point process on $S_u\times(0,\infty)$ for individual $u$\\
$\chi(u)$           & Decoration at the origin of $S_u$ (initial size of individual $u$)\\
$\P$                & Law of the whole family $(f_u,\eta_u)_{u\in\U}$ starting from  decoration $1$\\
$\Q$                & Law of the decorated tree $(T,g)$ (image of $\P$ on $\T$) starting from  decoration $1$\\
$\P_x,\Q_x$         & Same laws when the initial decoration at the root is $g(\uprho)=x>0$
\end{tabular}
\end{center}

\noindent \textbf{Measures and local times on ssMt} 
\vspace{-0.5cm}
\begin{center}
\begin{tabular}{p{0.25\textwidth}p{0.7\textwidth}}
$\uplambda$         & Length measure on $T$\\
$\uplambda^\gamma$  & Weighted length measure: $\uplambda^\gamma(\dd t)=g(t)^{\gamma-\alpha}\uplambda(\dd t)$\\
$\upmu$             & Harmonic measure on $T$\\
$\mathcal L(x)$     & Level set at level $x>0$: $\mathcal L(x)=\{t\in T: g(t)=x\}$ \\
$L(x,\dd t)$        & Local time measure at level $x>0$, supported on $\mathcal L(x)$\\
$L(x,t)$            & Repartition function of $L(x,\cdot)$ along an oriented segment\\
$L(x,T)$            & Total local time at level $x$ on the whole tree $T$\\
$\mathcal H_x$      & First hitting line  of the decoration at level  $x$\\
$N(x,\dd t)$        & Counting measure on $\mathcal H_x$; $N(x,T)=\#\mathcal H_x$\\
\end{tabular}
\end{center}

\noindent \textbf{Excursions} 
\vspace{-0.5cm}
\begin{center}
\begin{tabular}{p{0.25\textwidth}p{0.7\textwidth}}
$\mathcal L$ 
& Level set at level $1$: $\mathcal L=\{t\in T:g(t)=1\}$\\
$\mathcal L_u$ 
& Level set of the decoration on the segment $S_u$, for $u\in \mathbb{U}$\\
$\mathbb L$ 
& Subset of $\mathbb{U}$ of all individuals whose level set is non-empty\\
$d',d''$ 
& D\'ebut and end of an excursion-subtree\\
$R(\mathtt T^\bullet)$ 
& Reduction operator (pruning at first returns to level $1$)\\
$\mathbf N^\bullet$ 
& Excursion measure on $\T^\bullet$\\
$n(\mathtt T^\bullet)$ 
& Number of first returns to level $1$ away from the root\\
$\tilde{\mathcal L}$ 
& Quotient of $\mathcal L$ by the local-time pseudo-distance $d_L$\\
$\mathcal E$ 
& Concatenated excursion process\\
\end{tabular}
\end{center}

\newpage

\bibliography{LTonssMt.bib}

\end{document}